\title[Potential spaces for Jacobi expansions]
{On potential spaces \\ related to Jacobi expansions}
\author[B{.} Langowski]{Bartosz Langowski}
\address{Bartosz Langowski \newline
			Institute of Mathematics and Computer Science \newline
      Wroc\l{}aw University of Technology       \newline
      Wyb{.} Wyspia\'nskiego 27,
      50--370 Wroc\l{}aw, Poland      
      }
\email{bartosz.langowski@pwr.wroc.pl}
\theoremstyle{plain}
\newtheorem{thm}{Theorem}[section]
\newtheorem{lem}[thm]{Lemma}
\newtheorem{prop}[thm]{Proposition}
\newtheorem{remark}[thm]{Remark}
\theoremstyle{definition}
\theoremstyle{remark}
\newtheorem*{rem*}{Remark}
\theoremstyle{plain}
\DeclareMathOperator{\spann}{span}
\DeclareMathOperator{\support}{supp}
\DeclareMathOperator{\id}{Id}
\def\N{\mathbb N}
\def\Z{\mathbb Z}
\def\e{e}
\def\P{\mathcal P}
\def\m{\mu} 						
\def\ab{\alpha,\beta}
\def\J{\mathcal J} 			
\def\q{\mathfrak q}
\def\ph{\phi_n^{\ab}}
\def\a{\alpha}
\def\b{\beta}
\def\ab{\alpha,\beta}
\def\t{\theta}
\def\vp{\varphi}
\def\st{\sin \frac{\theta}{2}}
\def\svp{\sin \frac{\varphi}{2}}
\def\ct{\cos \frac{\theta}{2}}
\def\cvp{\cos \frac{\varphi}{2}}
\def\pia{d\Pi_{\alpha}(u)}
\def\pib{d\Pi_{\beta}(v)}
\def\q{\mathfrak{q}}
\def\piK{d\Pi_{\alpha, K}(u)}
\def\piR{d\Pi_{\beta, R}(v)}
\def\P{\mathcal P}
\def\m{\mu}
\begin{document}

\begin{abstract}
We investigate potential spaces associated with Jacobi expansions.
We prove structural and Sobolev-type embedding theorems for these spaces.
We also establish their characterizations in terms of suitably defined fractional square functions.
Finally, we present sample applications of the Jacobi potential spaces connected with a PDE problem.
\end{abstract}

\maketitle

\footnotetext{
\emph{\noindent Mathematics Subject Classification:} primary 42C10; secondary 42C05, 42C20.\\
\emph{Key words and phrases:} Jacobi expansion, potential space, Sobolev space, fractional square function.

Research supported by the National Science Centre of Poland, project no.\ 2013/09/N/ST1/04120.
}

\section{Introduction} \label{sec:intro}

This paper is a continuation of our study from \cite{L2}, where Sobolev spaces and potential spaces
in the context of expansions into Jacobi trigonometric `functions' were investigated. The main
achievement of \cite{L2} is a proper definition of Jacobi Sobolev spaces in terms of suitably
chosen higher-order distributional derivatives, so that these spaces coincide with the Jacobi potential
spaces with certain parameters (see Section \ref{sec:prel} for details). The latter spaces 
are defined similarly
as in the classical situation, via integral operators arising from negative powers of the Jacobi
Laplacian (or its shift, in some cases). 

In the present paper we focus on the Jacobi potential spaces. Nevertheless, in view of what was just 
said above, our results implicitly pertain also to the Jacobi Sobolev spaces. We prove structural and
Sobolev-type embedding theorems for the potential spaces (Theorems \ref{thm:struc} and \ref{thm:embed}). 
We also establish their
characterizations in terms of suitably defined fractional square functions 
(Theorems \ref{equivfun} and \ref{equivfun'}). This part is motivated by
the recent results of Betancor et.\ al.\ \cite{betsq}, and the associated analysis uses the theory of
vector-valued Calder\'on-Zygmund operators on spaces of homogeneous type. As a result of independent
interest, we prove $L^p$-boundedness of the `vertical' fractional $g$-functions associated with Jacobi
trigonometric `function' and polynomial expansions (Theorems \ref{ogrgff} and \ref{ogrgf}).
Finally, inspired by some of the results in \cite{BR,BT1,BT2}, we present sample applications of the Jacobi
potential spaces connected with a Cauchy PDE problem based on the Jacobi Laplacian.

We believe that our results enrich the line of research concerning Sobolev and potential spaces related
to classical discrete and continuous orthogonal expansions, see in particular 
\cite{betancor,betsq,BT1,BT2,Graczyk,L2,RT}; see also \cite{BU1,BU2} where some results on Jacobi potential
spaces can be found, though in a different Jacobi setting and with a different approach from ours.
We point out that intimately connected to potential spaces are potential operators, and in the above-mentioned
contexts they were studied intensively and thoroughly in the recent past. We refer the interested readers
to \cite{Nowak&Roncal,NoSt1,NoSt2,NoSt3} and also to references given in these works. 
In particular, \cite{Nowak&Roncal} delivers a solid ground for our developments. 

The paper is organized as follows. In Section \ref{sec:prel} we introduce the Jacobi setting and
basic notions. In Section \ref{sec:structural} we prove the 
structural and embedding theorems announced above.
Section \ref{sec:char} contains the fractional square function characterizations of the Jacobi
potential spaces. Section~\ref{sec:convergence} is devoted to sample applications of the potential
spaces. Finally, in Section \ref{sec:g-functions}
we prove the $L^p$ results for the fractional square functions needed in Section \ref{sec:char}.

\textbf{Notation.} Throughout the paper we use a standard notation
We write $X\lesssim Y$ to indicate that $X\leq CY$ 
with a positive constant $C$ independent of significant quantities. We shall write $X \simeq Y$ when
simultaneously $X \lesssim Y$ and $Y \lesssim X$.

\textbf{Acknowledgment.}
The author would like to express his gratitude to Professor Adam Nowak for 
his constant support during the preparation of this paper.

\section{Preliminaries} \label{sec:prel}

Given parameters $\ab > -1$, the \emph{Jacobi trigonometric functions} are defined as
\begin{equation*}
\phi_n^{\ab}(\theta):=\Psi^{\ab}(\theta)\,\P_n^{\ab}(\theta),\qquad\theta\in(0,\pi), \quad n \ge 0,
\end{equation*}
where
$$
\Psi^{\ab}(\theta) := \Big( \sin\frac{\theta}2\Big)^{\alpha+1/2} \Big(\cos\frac{\theta}2\Big)^{\beta+1/2}
$$
and
$$
\P_n^{\ab}(\theta) := c_n^{\ab}\, P_n^{\ab}(\cos\theta)
$$
with $P_n^{\ab}$ denoting the classical \emph{Jacobi polynomials} as defined in Szeg\"o's monograph \cite{Sz}
and $c_n^{\ab}$ being normalizing constants. The system $\{\phi_n^{\ab} : n \ge 0\}$ is an orthonormal
basis in $L^2(0,\pi)$. This basis consists of eigenfunctions of the \emph{Jacobi Laplacian}
\begin{equation*} 
L_{\ab}=-\frac{d^2}{d\theta^2} -\frac{1-4\alpha^2}{16\sin^2\frac{\theta}{2}}-\frac{1-4\beta^2}{16\cos^2\frac{\theta}{2}}
 = D_{\ab}^* D_{\ab} + A^2_{\ab}; 
\end{equation*}
here $A_{\ab}=(\alpha+\beta+1)/2$,
$D_{\ab}=\frac{d}{d\theta}-\frac{2\alpha+1}{4}\cot\frac{\theta}{2}+\frac{2\beta+1}{4}\tan\frac{\theta}{2}$
is the first order `derivative' naturally associated with $L_{\ab}$, and 
$D_{\ab}^* = D_{\ab}-2\frac{d}{d\theta}$ is its formal adjoint in $L^2(0,\pi)$.
The eigenvalue corresponding to $\phi_n^{\ab}$ is
$$
\lambda_n^{\ab} := \big(n+ A_{\ab}\big)^2.
$$
It is well known that $L_{\ab}$, considered initially on $C_c^2(0,\pi)$, has a non-negative self-adjoint
extension to $L^2(0,\pi)$ whose spectral resolution is discrete and given by the $\phi_n^{\ab}$.
We denote this extension by still the same symbol $L_{\ab}$. Notice that for some choices of $\alpha$
and $\beta$ we get the same differential operator $L_{\ab}$, nevertheless the resulting self-adjoint
extensions are different. Some problems in harmonic analysis related to $L_{\ab}$ were investigated
recently in \cite{L2,Nowak&Roncal,NS2,Stempak}.

When $\ab \ge -1/2$, the functions $\phi_n^{\ab}$ belong to all $L^p(0,\pi)$, $1 < p < \infty$.
However, if $\alpha< -1/2$ or $\beta < -1/2$, then $\phi_n^{\ab}$ are in $L^p(0,\pi)$ if and only if
$p < -1/\min(\alpha+1/2,\beta+1/2)$. This leads to the so-called \emph{pencil phenomenon}
manifesting in the restriction $p \in E(\ab)$ for $L^p$ mapping properties of various harmonic
analysis operators associated with $L_{\ab}$. Here
$$
E(\ab) := \big( p'(\ab), p(\ab) \big)
$$
with
$$
p(\alpha,\beta) := \begin{cases}
											\infty, & \ab \ge -1/2, \\
											-1/\min(\alpha+1/2,\beta+1/2), & \textrm{otherwise}
										\end{cases}
$$
and $p'$ denoting the conjugate exponent of $p$, $1/p+1/p'=1$.
Recall that (see \cite[Lemma 2.3]{Stempak}) the subspace
$$
S_{\ab} := \spann\{\ph: n \ge 0 \}
$$
is dense in $L^p(0,\pi)$ provided that $1 \le p < p(\ab)$.

We denote by $\{H_t^{\ab}\}_{t \ge 0}$ the \emph{Poisson-Jacobi semigroup}, that is the semigroup of
operators generated in $L^2(0,\pi)$ by the square root of $L_{\ab}$. In view of the spectral theorem,
for $f \in L^2(0,\pi)$ and $t\ge 0$ we have
\begin{equation} \label{JPser}
H_t^{\ab}f = \sum_{n=0}^{\infty} \exp\Big(-t\sqrt{\lambda_{n}^{\ab}}\Big)\, a_n^{\ab}(f)\, \phi_n^{\ab},
\end{equation}
where
$$
a_n^{\ab}(f) := \int_0^{\pi} f(\theta) \phi_n^{\ab}(\theta)\, d\theta
$$
is the $n$th Fourier-Jacobi coefficient of $f$. The series in \eqref{JPser} converges in $L^2(0,\pi)$.
Moreover, if $t>0$, it converges pointwise and that even for $f\in L^p(0,\pi)$, $p > p'(\ab)$, 
defining a smooth
function both in $t$ and the space variable. Thus \eqref{JPser} provides an extension of $\{H_t^{\ab}\}_{t>0}$
to the above $L^p$ spaces (which we denote by still the same symbol). The pointwise convergence and
smoothness are easily seen with the aid of the polynomial bound (cf.\ \cite[(7.32.2)]{Sz})
\begin{equation} \label{bphi}
|\phi_{n}^{\ab}(\theta)| \le C \, \Psi^{\ab}(\theta)\, 
	(n+1)^{1/2+\max\{\alpha,\beta,-1/2\}}, \qquad \theta \in (0,\pi),
 \quad n \ge 0,
\end{equation}
and the resulting polynomial growth in $n$ of $a_n^{\ab}(f)$. Furthermore, $\{H_t^{\ab}\}_{t>0}$
has an integral representation 
$$
H_t^{\ab}f(\theta) = \int_0^{\pi} H_t^{\ab}(\theta,\varphi)f(\varphi)\, d\varphi, \qquad t >0, \quad 
	\theta \in (0,\pi),
$$
valid for $f \in L^p(0,\pi)$, $p > p'(\ab)$. We note that sharp estimates of the Poisson-Jacobi
kernel $H_t^{\ab}(\theta,\varphi)$ follow readily from \cite[Theorem A.1 in the appendix]{NS2} and 
\cite[Theorem 6.1]{parameters}.

Next, we gather some facts about potential operators associated with $L_{\ab}$. Let $\sigma > 0$.
We consider the \emph{Riesz type potentials} $L_{\ab}^{-\sigma}$ assuming that $\alpha+\beta\neq -1$
(when $\alpha+\beta = -1$, the bottom eigenvalue of $L_{\ab}$ is $0$) and the \emph{Bessel type potentials}
$(\id+L_{\ab})^{-\sigma}$ with no restrictions on $\alpha$ and $\beta$. Clearly, these operators are
well defined spectrally and bounded in $L^2(0,\pi)$. Moreover, both $L_{\ab}^{-\sigma}$ and 
$(\id+L_{\ab})^{-\sigma}$ possess integral representations that extend actions of these potentials to
$L^p(0,\pi)$, $p > p'(\ab)$, see \cite{Nowak&Roncal}. We keep the same notation for the corresponding
extensions. According to \cite[Proposition 2.4]{L2}, $L_{\ab}^{-\sigma}$ and 
$(\id+L_{\ab})^{-\sigma}$ are bounded and one-to-one on $L^p(0,\pi)$ for $p \in E(\ab)$. 
An exhaustive study of $L^p-L^q$ mapping properties of the potential operators is contained in 
\cite{Nowak&Roncal}. In particular, from \cite[Theorem 2.4]{Nowak&Roncal} (see also comments in
\cite[Section 1]{Nowak&Roncal}) we get the following.
\begin{prop}  \label{prop:LpLq}
Let $\ab > -1$ and $\sigma > 0$. Assume that $p > p'(\ab)$ and $1 \le q < p(\ab)$. Then
$L_{\ab}^{-\sigma}$, $\alpha+\beta \neq -1$, and $(\id+L_{\ab})^{-\sigma}$ are bounded from
$L^p(0,\pi)$ to $L^q(0,\pi)$ if and only if
$$
\frac{1}q \ge \frac{1}{p} - 2\sigma.
$$
Moreover,
these operators are bounded from
$L^p(0,\pi)$ to $L^{\infty}(0,\pi)$ if and only if
$$
\ab\geq-1/2\quad\textrm{and}\quad \frac{1}{p}<2\sigma.
$$
\end{prop}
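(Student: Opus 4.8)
\smallskip
My plan is to derive the result from sharp estimates for the kernels of the potential operators, which is the route followed in \cite{Nowak&Roncal}. First I would pass from the spectral definitions to integral representations by subordination: since the bottom eigenvalue of $L_{\ab}$ equals $A_{\ab}^2$, which is positive when $\alpha+\beta\neq-1$,
\[
L_{\ab}^{-\sigma}=\frac1{\Gamma(\sigma)}\int_0^\infty t^{\sigma-1}e^{-tL_{\ab}}\,dt,
\qquad
(\id+L_{\ab})^{-\sigma}=\frac1{\Gamma(\sigma)}\int_0^\infty t^{\sigma-1}e^{-t}\,e^{-tL_{\ab}}\,dt,
\]
so that both potentials are integral operators whose kernels arise by integrating the Jacobi heat kernel $G_t^{\ab}(\theta,\varphi)$ against $t^{\sigma-1}\,dt$, resp.\ $t^{\sigma-1}e^{-t}\,dt$. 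I would then insert the known sharp two-sided bounds for $G_t^{\ab}$ (of the same nature as, and obtainable from, the Poisson--Jacobi estimates recalled above) and carry out the $t$-integration. This should give a sharp description of the potential kernels $K_\sigma^{\ab}(\theta,\varphi)$: away from the corners $\theta,\varphi\in\{0,\pi\}$ they are comparable, the $\Psi^{\ab}$ factors being then harmless, to the one-dimensional Riesz kernel $|\theta-\varphi|^{2\sigma-1}$ (with a logarithm when $2\sigma=1$ and boundedly when $2\sigma>1$); near a corner they behave like $\Psi^{\ab}(\theta)\,\Psi^{\ab}(\varphi)$ times such a kernel in the local variable; and off a neighborhood of the diagonal $K_\sigma^{\ab}(\theta,\varphi)\lesssim\Psi^{\ab}(\theta)\,\Psi^{\ab}(\varphi)$.

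For the sufficiency I would split $K_\sigma^{\ab}$ into a part supported near the diagonal and a global remainder. On the diagonal part, away from the corners, the operator is a one-dimensional fractional integral of order $2\sigma$, so the classical one-dimensional Hardy--Littlewood--Sobolev inequality gives $L^p\to L^{q^\ast}$ with $1/q^\ast=1/p-2\sigma$ whenever this is positive, hence, $(0,\pi)$ being of finite measure, $L^p\to L^q$ for every $q$ with $1/q\ge1/p-2\sigma$; near the corners I would instead invoke one-dimensional power-weight (Hardy-type) inequalities and check that $p>p'(\ab)$ and $q<p(\ab)$ are precisely the conditions under which they hold, in accordance with $\int_0^\pi\Psi^{\ab}(\varphi)^{p'}\,d\varphi<\infty$ and $\int_0^\pi\Psi^{\ab}(\theta)^{q}\,d\theta<\infty$. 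The global remainder, with kernel $\lesssim\Psi^{\ab}(\theta)\Psi^{\ab}(\varphi)$, maps $L^p$ into $L^q$ by H\"older under the same two restrictions. For the $L^\infty$ target, i.e.\ the regime $1/p-2\sigma\le0$, the diagonal part goes into $L^\infty$ exactly when $1/p<2\sigma$, and, once $\ab\ge-1/2$ makes $\Psi^{\ab}$ bounded, so does the global remainder.

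For the necessity I would localize. Testing on functions concentrated in a fixed compact subinterval of $(0,\pi)$ and rescaling reduces the first condition to the sharpness of Euclidean fractional integration, and likewise forces $1/p<2\sigma$ when the target is $L^\infty$. To see that $\ab\ge-1/2$ cannot be omitted in the $L^\infty$ statement, assume $\alpha<-1/2$ (the case $\beta<-1/2$ being symmetric); then $\Psi^{\ab}(\theta)\to\infty$ as $\theta\to0^+$, and applying either potential to the indicator of a small interval near $0$, together with the pointwise lower bounds for $K_\sigma^{\ab}$ there, yields an unbounded function out of a bounded input. The step I expect to be the real obstacle --- everything else being comparatively soft --- is proving the sharp bounds for $K_\sigma^{\ab}$ that remain uniform up to the corners $\theta,\varphi\in\{0,\pi\}$, since that is exactly where the pencil phenomenon originates and where the weight $\Psi^{\ab}$ interacts with the local fractional-integral singularity; this is the technical core of \cite{Nowak&Roncal}.
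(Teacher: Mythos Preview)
The paper does not prove this proposition at all: it is simply imported from \cite[Theorem~2.4]{Nowak&Roncal} (with the comments in \cite[Section~1]{Nowak&Roncal}) and stated without argument. Your outline is therefore not competing with any proof in the paper; rather, you are sketching the strategy of \cite{Nowak&Roncal} itself, which is exactly the reference the paper defers to. In that sense your approach is the ``same'' as the paper's, only the paper chooses to cite rather than reproduce it.

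As a sketch of the \cite{Nowak&Roncal} argument your outline is broadly on target: subordination to the heat semigroup, sharp two-sided bounds on the potential kernel, a near-diagonal piece governed by one-dimensional Hardy--Littlewood--Sobolev, and an off-diagonal piece controlled by the integrability of $\Psi^{\ab}$ in $L^{p'}$ and $L^q$. You correctly identify the corner analysis as the technical core. One point to be careful about if you ever write this out in full: the sharp kernel estimates in \cite{Nowak&Roncal} are more structured than just ``$\Psi^{\ab}(\theta)\Psi^{\ab}(\varphi)$ times a local Riesz kernel'' near the corners --- the interaction between the fractional singularity and the boundary weights is subtler, and getting the \emph{if and only if} (rather than just sufficiency) requires that the upper and lower bounds truly match. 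But for the purposes of this paper none of that is needed; citing \cite{Nowak&Roncal} is both what the paper does and what you should do.
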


Following the classical picture, see e.g.\ \cite[Chapter V]{stein}, potential spaces in the Jacobi context
should be defined as the ranges of the Bessel type potentials acting on $L^p(0,\pi)$. However,
in our situation the spectrum of $L_{\ab}$ is discrete and separated from $0$ if $\alpha + \beta \neq -1$.
Therefore in case $\alpha+\beta \neq -1$ one can employ equivalently the Riesz type potentials, which
are simpler. Consequently, given $s>0$ and $p \in E(\ab)$ we set (see \cite{L2})
$$
\mathcal{L}_{\ab}^{p,s} := \begin{cases}
															L_{\ab}^{-s/2}\big( L^p(0,\pi)\big), & \alpha+\beta \neq -1,\\
															(\id+L_{\ab})^{-s/2}\big( L^p(0,\pi)\big), & \alpha+\beta = -1.
														\end{cases}
$$
Then the formula
$$
\|f\|_{\mathcal{L}_{\ab}^{p,s}} := \|g \|_{L^p(0,\pi)}, \qquad
																		\begin{cases} 
																			 f=L^{-s/2}_{\ab}g, \quad g \in L^p(0,\pi),
																			 		& \alpha+\beta \neq -1,\\														
																			 f = (\id+L_{\ab})^{-s/2}g, \quad g \in L^p(0,\pi),
																			 		& \alpha+\beta = -1,
																		\end{cases}
$$
defines a complete norm on $\mathcal{L}_{\ab}^{p,s}$. We call the resulting Banach spaces
$\mathcal{L}_{\ab}^{p,s}$ the \emph{Jacobi potential spaces}. 
Note that according to \cite[Corollary 2.6]{L2},
$S_{\ab}$ is a dense subspace of $\mathcal{L}_{\ab}^{p,s}$. 

In \cite{L2} the author introduced the \emph{Jacobi Sobolev spaces}
$$
W_{\ab}^{p,m} := \big\{ f \in L^p(0,\pi) : D^{(k)}f \in L^p(0,\pi), k=1,\ldots,m \big\},
$$
equipped with the norms
$$
\|f\|_{{W}_{\ab}^{p,m}} := \sum_{k=0}^{m} \|{D}^{(k)}f\|_{L^p(0,\pi)}.
$$
Here $m \ge 1$ is integer and the operators
$$
D^{(k)} := D_{\alpha+k-1,\beta+k-1} \circ \ldots \circ D_{\alpha+1,\beta+1} \circ D_{\ab}
$$
play the role of higher-order derivatives, with the differentiation understood in the weak sense.
The main result of \cite{L2} says that, for $\ab > -1$, $p \in E(\ab)$ and $m \ge 1$, we have
the coincidence $W_{\ab}^{p,m} = \mathcal{L}_{\ab}^{p,m}$ in the sense of isomorphism of Banach spaces.
A bit surprisingly, the isomorphism does not hold in general if $D^{(k)}$ is replaced by seemingly
more natural in this context $(D_{\ab})^k$.

We finish this preliminary section by invoking (see \cite[Section 2]{L2}) the following useful result, 
which is essentially a special case of the general multiplier-transplantation theorem due to Muckenhoupt
\cite[Theorem 1.14]{Muckenhoupt} (see \cite[Corollary 17.11]{Muckenhoupt} and also 
\cite[Theorem 2.5]{CNS} together with the related comments on pp{.}\,376--377 therein). 
Here and elsewhere we use the convention that $\phi_{n}^{\ab} \equiv 0$ if $n<0$.
\begin{lem}[Muckenhoupt]\label{mk}
Let $\alpha,\beta,\gamma,\delta>-1$ and let $d\in\Z$. 
Assume that $h(n)$ is a sequence satisfying for sufficiently large $n$ the smoothness condition
$$
h(n)=\sum_{j=0}^{J-1}c_{j}\,n^{-j}+\mathcal O(n^{-J}),
$$
where $J\ge \alpha+\beta+\gamma+\delta+6$ and $c_j$ are fixed constants.

Then for each $p$ satisfying $p'(\gamma,\delta) < p < p(\alpha,\beta)$ the operator 
$$
f\mapsto\sum_{n=0}^{\infty}h(n)\,a_n^{\ab}(f)\,\phi_{n+d}^{\gamma,\delta}(\theta),\qquad f\in S_{\ab},
$$ 
extends uniquely to a bounded operator on $L^p(0,\pi)$. 
\end{lem}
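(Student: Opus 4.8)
The plan is to derive the lemma from Muckenhoupt's general multiplier--transplantation theorem for Jacobi series, \cite[Theorem 1.14]{Muckenhoupt} (conveniently packaged as \cite[Corollary 17.11]{Muckenhoupt}; see also \cite[Theorem 2.5]{CNS}), after transporting the statement from the trigonometric system $\{\phi_n^{\ab}\}$ to a system of weighted Jacobi polynomials to which that theorem applies directly. First I would substitute $x=\cos\theta$ in the orthogonality relation for the $P_n^{\ab}$, which yields, up to a positive constant, the identity $(1-\cos\theta)^{\alpha}(1+\cos\theta)^{\beta}\sin\theta\,d\theta=\Psi^{\ab}(\theta)^2\,d\theta$ on $(0,\pi)$, so that $\{\P_n^{\ab}\}$ is orthonormal with respect to $d\m_{\ab}:=\Psi^{\ab}(\theta)^2\,d\theta$. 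Writing every $f\in S_{\ab}$ as $f=\Psi^{\ab}\widetilde f$ gives $a_n^{\ab}(f)=\int_0^{\pi}\widetilde f\,\P_n^{\ab}\,d\m_{\ab}$ and $\phi_{n+d}^{\gamma,\delta}=\Psi^{\gamma,\delta}\,\P_{n+d}^{\gamma,\delta}$, so the operator in the lemma is conjugate, by multiplication by $\Psi^{\ab}$ on the input and by $\Psi^{\gamma,\delta}$ on the output, to the multiplier--transplantation operator $\widetilde f\mapsto\sum_{n\ge0}h(n)\big(\int_0^{\pi}\widetilde f\,\P_n^{\ab}\,d\m_{\ab}\big)\P_{n+d}^{\gamma,\delta}$. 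Since $\|\Psi^{\nu}g\|_{L^p(0,\pi)}^p=\int_0^{\pi}|g|^p\,\Psi^{\nu}(\theta)^{p-2}\,d\m_{\nu}$ for every parameter pair $\nu$, the asserted $L^p(0,\pi)$-boundedness is equivalent to boundedness of this operator from $L^p\big(\Psi^{\ab}(\theta)^{p-2}\,d\m_{\ab}\big)$ into $L^p\big(\Psi^{\gamma,\delta}(\theta)^{p-2}\,d\m_{\gamma,\delta}\big)$.

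This last formulation is exactly of the kind covered by \cite[Theorem 1.14]{Muckenhoupt}: written in the variable $x=\cos\theta$, the measures $\Psi^{\ab}(\theta)^{p-2}\,d\m_{\ab}$ and $\Psi^{\gamma,\delta}(\theta)^{p-2}\,d\m_{\gamma,\delta}$ become power weights $(1-x)^{a}(1+x)^{b}\,dx$ and $(1-x)^{a'}(1+x)^{b'}\,dx$ with exponents that are explicit affine functions of $\alpha,\beta,\gamma,\delta$ and $p$. It then remains to perform three verifications. First, the normalizing constants $c_n^{\ab}$ relating $\P_n^{\ab}$ to Szeg\"o's $P_n^{\ab}$, together with the normalization used in \cite{Muckenhoupt}, have complete asymptotic expansions in powers of $1/n$; hence multiplying $h(n)$ by the appropriate ratio of normalizations preserves the smoothness hypothesis with the same $J$, altering only the constants $c_j$. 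Second, the admissibility conditions imposed on the weight exponents in \cite[Theorem 1.14]{Muckenhoupt} should translate, upon inserting the above affine expressions, precisely into the two-sided restriction $p'(\gamma,\delta)<p<p(\alpha,\beta)$ --- the upper bound originating from the input weight (equivalently from $\phi_n^{\ab}\in L^p$) and the lower one from the output weight (equivalently from $\phi_n^{\gamma,\delta}\in L^{p'}$). Third, the order of smoothness required in \cite{Muckenhoupt} is covered by $J\ge\alpha+\beta+\gamma+\delta+6$, and the shift $d\in\Z$, read with the convention $\phi_n^{\ab}\equiv0$ for $n<0$, is exactly the transplantation feature that theorem supplies. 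Once the bound is established on the dense subspace $S_{\ab}\subset L^p(0,\pi)$ --- density holding because $p<p(\ab)$ --- the unique bounded extension to $L^p(0,\pi)$ follows.

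I expect the main obstacle to be the bookkeeping in the second verification: the admissibility conditions in \cite{Muckenhoupt} are phrased as several inequalities in the four weight exponents, some involving maxima or minima, and one must check that after substituting the values produced by the change of variables they all collapse to the single clean pencil condition $p\in\big(p'(\gamma,\delta),p(\alpha,\beta)\big)$. The remaining steps are routine.
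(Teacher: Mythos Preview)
Your approach is exactly what the paper indicates: the lemma is not proved in the paper but merely invoked as ``essentially a special case of the general multiplier-transplantation theorem due to Muckenhoupt \cite[Theorem 1.14]{Muckenhoupt} (see \cite[Corollary 17.11]{Muckenhoupt} and also \cite[Theorem 2.5]{CNS})'', and your sketch of conjugating by the factors $\Psi^{\ab}$, $\Psi^{\gamma,\delta}$ to reduce to a power-weighted Jacobi transplantation and then verifying the weight and smoothness hypotheses is precisely the derivation those citations encode. The bookkeeping you anticipate in translating Muckenhoupt's admissibility conditions into the pencil restriction $p'(\gamma,\delta)<p<p(\ab)$ is carried out in \cite[Section 2]{L2} and \cite[pp.~376--377]{CNS}, so there is no gap.
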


\section{Structural and embedding theorems} \label{sec:structural}

In this section we establish structural and embedding theorems for the Jacobi potential spaces. We begin
with recalling definitions of the variants of higher-order Riesz-Jacobi transforms considered in \cite{L2},
$$
R_{\ab}^{k}=\begin{cases} D^{(k)}L_{\ab}^{-k/2}, &\qquad \alpha+\beta\neq-1,\\
D^{(k)}(\id+L_{\ab})^{-k/2},&\qquad \alpha+\beta=-1.
\end{cases}
$$
Here $k \ge 0$ and $R_{\ab}^k$ are well defined at least on $S_{\ab}$. Using Lemma \ref{mk} it can be shown,
see \cite[Proposition 3.4]{L2}, that $R_{\ab}^k$ extend (uniquely) to bounded operators on $L^p(0,\pi)$,
$p \in E(\ab)$, $\ab > -1$.

The following result reveals mutual relations between Jacobi potential spaces with different parameters.
It also describes mapping properties of the Riesz-Jacobi transforms acting on the potential spaces.
\begin{thm} \label{thm:struc}
Let $\ab > -1$ and $p \in E(\ab)$. Assume that $r,s > 0$ and $k \ge 1$.
\begin{itemize}
\item[(i)] If $r < s$, then $\mathcal{L}_{\ab}^{p,s} \subset \mathcal{L}_{\ab}^{p,r} \subset L^p(0,\pi)$
	and the inclusions are proper and continuous.
\item[(ii)] The spaces $\mathcal{L}_{\ab}^{p,r}$ and $\mathcal{L}_{\ab}^{p,s}$ are isometrically isomorphic.
\item[(iii)] If $k<s$, then $D^{(k)}$ is bounded from $\mathcal{L}_{\ab}^{p,s}$ to
	$\mathcal{L}_{\alpha+k,\beta+k}^{p,s-k}$. Moreover, $D^{(k)}$ is bounded from $\mathcal{L}_{\ab}^{p,k}$
	to $L^p(0,\pi)$.
\item[(iv)] The Riesz operator $R_{\ab}^k$ is bounded from $\mathcal{L}_{\ab}^{p,s}$ to
	$\mathcal{L}_{\alpha+k,\beta+k}^{p,s}$. 
\end{itemize}
\end{thm}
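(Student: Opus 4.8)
The plan is to reduce everything to the Fourier--Jacobi coefficient level and to the known $L^p$-boundedness of multiplier-transplantation operators (Lemma~\ref{mk}), the $L^p-L^q$ mapping properties (Proposition~\ref{prop:LpLq}), and the $L^p$-boundedness of the Riesz--Jacobi transforms $R_{\ab}^k$ recalled just above. Throughout I will treat the generic case $\alpha+\beta\neq-1$ working with $L_{\ab}^{-s/2}$; the case $\alpha+\beta=-1$ is handled identically with $(\id+L_{\ab})^{-s/2}$, and in fact one can always replace $L_{\ab}^{-\sigma}$ by $(\id+L_{\ab})^{-\sigma}$ since their quotient is a bounded, invertible multiplier operator by Lemma~\ref{mk} (the symbol $\lambda_n^{\ab}/(1+\lambda_n^{\ab})$ and its reciprocal both admit the required asymptotic expansion). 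I will use repeatedly that $L_{\ab}^{-\sigma}$ is bounded and one-to-one on $L^p(0,\pi)$ for $p\in E(\ab)$ (\cite[Proposition~2.4]{L2}), so that $\|f\|_{\mathcal L_{\ab}^{p,s}}$ is well defined, and that the semigroup property $L_{\ab}^{-\sigma_1}L_{\ab}^{-\sigma_2}=L_{\ab}^{-\sigma_1-\sigma_2}$ holds spectrally and persists on $S_{\ab}$, hence on all of $L^p(0,\pi)$ by density (Lemma~\ref{mk} gives the needed boundedness to pass to the limit).

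For (ii), given $r,s>0$ the operator $L_{\ab}^{-(s-r)/2}$ maps $L^p(0,\pi)$ onto itself isometrically after renorming: more precisely, if $f=L_{\ab}^{-r/2}g$ then $L_{\ab}^{-(s-r)/2}f=L_{\ab}^{-s/2}g$ (using the semigroup property; if $s<r$ one instead writes $L_{\ab}^{-r/2}=L_{\ab}^{-s/2}L_{\ab}^{-(r-s)/2}$), so $f\mapsto L_{\ab}^{-(s-r)/2}f$ is a bijection $\mathcal L_{\ab}^{p,r}\to\mathcal L_{\ab}^{p,s}$ carrying $\|\cdot\|_{\mathcal L_{\ab}^{p,r}}$ to $\|\cdot\|_{\mathcal L_{\ab}^{p,s}}$; injectivity of $L_{\ab}^{-\sigma}$ on $L^p$ makes this unambiguous. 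For (i): the continuous inclusion $\mathcal L_{\ab}^{p,s}\subset\mathcal L_{\ab}^{p,r}$ for $r<s$ amounts to boundedness of $L_{\ab}^{-(s-r)/2}$ on $L^p(0,\pi)$, which is \cite[Proposition~2.4]{L2}, together with the identity $f=L_{\ab}^{-r/2}\big(L_{\ab}^{-(s-r)/2}g\big)$ when $f=L_{\ab}^{-s/2}g$; and $\mathcal L_{\ab}^{p,r}\subset L^p$ with continuity is immediate from boundedness of $L_{\ab}^{-r/2}$ on $L^p$. Properness of the inclusions is the one genuinely non-formal point: I expect to argue that $L_{\ab}^{-(s-r)/2}$ is not onto $L^p(0,\pi)$, equivalently that $\mathcal L_{\ab}^{p,s}\subsetneq\mathcal L_{\ab}^{p,r}$, by exhibiting a function whose Fourier--Jacobi coefficients decay like a fixed negative power of $n$ tuned so that it lies in $\mathcal L_{\ab}^{p,r}$ but multiplying by $(\lambda_n^{\ab})^{(s-r)/4}$ destroys membership in $L^p$ --- one can build such an example from a lacunary-type series $\sum_n a_n\phi_n^{\ab}$ and compare $L^p$ norms using the polynomial bound \eqref{bphi} and orthogonality, or alternatively invoke the sharp $L^p-L^q$ failure at the endpoint in Proposition~\ref{prop:LpLq}; I regard constructing this witness cleanly as the main obstacle, and I would look to \cite{Nowak&Roncal} and \cite{L2} for a ready-made counterexample before building one from scratch. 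The inclusion $\mathcal L_{\ab}^{p,r}\subsetneq L^p(0,\pi)$ is proper because $L_{\ab}^{-r/2}$ is not surjective on $L^p$, for the same kind of reason.

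For (iii) and (iv) the key structural identity is the intertwining relation from \cite{L2}: $D_{\ab}$ maps $\phi_n^{\ab}$ (up to a constant) to $\phi_{n-1}^{\alpha+1,\beta+1}$, so $D^{(k)}$ sends $\phi_n^{\ab}$ to a constant multiple of $\phi_{n-k}^{\alpha+k,\beta+k}$, and $D^{(k)}$ commutes with the relevant powers of the Laplacians in the sense that $D^{(k)}L_{\ab}^{-k/2}=R_{\ab}^k$ is bounded on $L^p$, which is exactly \cite[Proposition~3.4]{L2} recalled above. For (iv): writing $f=L_{\ab}^{-s/2}g$ with $g\in L^p$, I want $R_{\ab}^k f=L_{\alpha+k,\beta+k}^{-s/2}\tilde g$ with $\|\tilde g\|_{L^p}\lesssim\|g\|_{L^p}$; computing on $S_{\ab}$ at the coefficient level, $R_{\ab}^k L_{\ab}^{-s/2}$ and $L_{\alpha+k,\beta+k}^{-s/2}R_{\ab}^k$ both act on $a_n^{\ab}(f)\phi_n^{\ab}$ producing $\phi_{n-k}^{\alpha+k,\beta+k}$ with symbols $(\lambda_n^{\ab})^{-s/2}$ times the Riesz symbol and $(\lambda_{n-k}^{\alpha+k,\beta+k})^{-s/2}$ times the Riesz symbol respectively; since $\lambda_n^{\ab}=(n+A_{\ab})^2$ and $\lambda_{n-k}^{\alpha+k,\beta+k}=(n+A_{\ab})^2$ — because $A_{\alpha+k,\beta+k}=A_{\ab}+k$ — these symbols are literally equal, so $R_{\ab}^k L_{\ab}^{-s/2}=L_{\alpha+k,\beta+k}^{-s/2}R_{\ab}^k$ on $S_{\ab}$ and then on $L^p$ by density and boundedness of all operators involved; thus $\tilde g=R_{\ab}^k g$ and boundedness of $R_{\ab}^k$ on $L^p$ finishes it. For (iii): when $k<s$ factor $L_{\ab}^{-s/2}=L_{\ab}^{-k/2}L_{\ab}^{-(s-k)/2}$, so for $f=L_{\ab}^{-s/2}g$ we get $D^{(k)}f=D^{(k)}L_{\ab}^{-k/2}\big(L_{\ab}^{-(s-k)/2}g\big)=R_{\ab}^k\big(L_{\ab}^{-(s-k)/2}g\big)$, and then applying the commutation from (iv) in the form $R_{\ab}^k L_{\ab}^{-(s-k)/2}=L_{\alpha+k,\beta+k}^{-(s-k)/2}R_{\ab}^k$ shows $D^{(k)}f=L_{\alpha+k,\beta+k}^{-(s-k)/2}(R_{\ab}^k g)$ with $\|R_{\ab}^k g\|_{L^p}\lesssim\|g\|_{L^p}$, i.e.\ $D^{(k)}f\in\mathcal L_{\alpha+k,\beta+k}^{p,s-k}$ with controlled norm; the endpoint statement $D^{(k)}:\mathcal L_{\ab}^{p,k}\to L^p(0,\pi)$ is the case "$s=k$", where $D^{(k)}L_{\ab}^{-k/2}=R_{\ab}^k$ is directly bounded on $L^p$. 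The only subtlety here is keeping track of whether the target parameters $(\alpha+k,\beta+k)$ still admit $p$ in the pencil interval — but $E(\alpha+k,\beta+k)\supseteq E(\ab)$ since shifting parameters up only enlarges $p(\cdot,\cdot)$, so there is no loss.
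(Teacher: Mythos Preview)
Your treatment of (ii), (iii), and (iv) is correct and essentially matches the paper, though with a slightly different internal organization: the paper proves (iii) directly (reducing to $k=1$ and applying Lemma~\ref{mk} to the multiplier $\big((\lambda_n^{\ab}-\lambda_0^{\ab})/\lambda_n^{\ab}\big)^{1/2}$) and then derives (iv) from (iii) together with the fact that $L_{\ab}^{-k/2}$ is an isometry $\mathcal{L}_{\ab}^{p,s}\to\mathcal{L}_{\ab}^{p,s+k}$, whereas you establish the commutation relation $R_{\ab}^k L_{\ab}^{-\sigma}=L_{\alpha+k,\beta+k}^{-\sigma}R_{\ab}^k$ (via $\lambda_n^{\ab}=\lambda_{n-k}^{\alpha+k,\beta+k}$) and route (iii) through (iv). Both arguments rest on the same eigenvalue identity and the $L^p$-boundedness of $R_{\ab}^k$, so the difference is cosmetic. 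The continuous inclusions in (i) are also handled exactly as the paper does.

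The genuine gap is the properness of the inclusions in (i), which you yourself flag as ``the main obstacle'' without resolving it. Your proposed approaches---constructing a coefficient-tuned lacunary series, or invoking the sharp endpoint failure in Proposition~\ref{prop:LpLq}---are not worked out, and neither is straightforward: controlling $L^p$ membership (for $p\neq 2$) by coefficient decay alone is delicate, and the $L^p$--$L^q$ endpoint in Proposition~\ref{prop:LpLq} concerns a different kind of failure (raising $q$ too far) rather than non-surjectivity on $L^p$ itself. The paper instead gives a short, self-contained argument: it first reduces to showing $\mathcal{L}_{\ab}^{p,r}\neq L^p(0,\pi)$ for rational $r>0$ (using injectivity of the potentials and the semigroup identity), then argues by contradiction---if equality held, iterating would force $L^p(0,\pi)=\mathcal{L}_{\ab}^{p,mr}$ for some integer $mr$, hence $L^p(0,\pi)=W_{\ab}^{p,mr}$ by the main result of \cite{L2}, so in particular $D_{\ab}f\in L^p(0,\pi)$ for every $f\in L^p(0,\pi)$. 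This is refuted by the explicit choice $f\equiv 1$ when $(\ab)\neq(-1/2,-1/2)$, or $f(\theta)=\log\theta$ otherwise. The leverage here is the Sobolev identification $\mathcal{L}_{\ab}^{p,m}=W_{\ab}^{p,m}$, which converts the abstract surjectivity question into a concrete differentiability obstruction.
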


\begin{proof}
Throughout the proof we assume that $\alpha+\beta \neq -1$. The opposite case is essentially parallel
(with (iii) and (iv) requiring a little bit more attention) and thus is left to the reader.

We first prove (i). Take $f \in \mathcal{L}_{\ab}^{p,s}$. Then, 
by the definition of $\mathcal{L}_{\ab}^{p,s}$, there exists $g \in L^p(0,\pi)$ such that
$f = L_{\ab}^{-s/2}g$. But this identity can be written as
$$
f = L_{\ab}^{-r/2} \big(L_{\ab}^{-(s-r)/2}g\big).
$$
Indeed, the equality 
\begin{equation} \label{comp}
L_{\ab}^{-s/2}g = L_{\ab}^{-r/2}(L_{\ab}^{-(s-r)/2}g), \qquad 0< r < s,
\end{equation}
is clear when $g \in S_{\ab}$,
and then for $g \in L^p(0,\pi)$ it follows by an approximation argument and 
$L^p$-boundedness of the potential operators.
Now, since Proposition \ref{prop:LpLq} implies $L_{\ab}^{-(s-r)/2}g \in L^p(0,\pi)$, we conclude that
$f \in \mathcal{L}_{\ab}^{p,r}$. Moreover, the inclusion just proved is continuous because
$L_{\ab}^{-(s-r)/2}$ is bounded on $L^p(0,\pi)$. The remaining inclusion is even more straightforward,
in view of the $L^p$-boundedness of $L_{\ab}^{-r/2}$. 
The fact that the reverse inclusions do not hold is verified as follows. 

Observe that, in view of the inclusions already proved, it suffices to show that
$\mathcal{L}_{\ab}^{p,r} \neq \mathcal{L}_{\ab}^{p,s}$ when $0 < r < s$ are rational numbers.
This task further reduces to proving that
\begin{equation} \label{red}
\mathcal{L}_{\ab}^{p,r} \neq L^p(0,\pi), \qquad 0 < r \in \mathbb{Q}.
\end{equation}
Indeed, suppose on the contrary that $\mathcal{L}_{\ab}^{p,r} = \mathcal{L}_{\ab}^{p,s}$. Then, for
any $f \in L^p(0,\pi)$ we have $L_{\ab}^{-r/2}f \in \mathcal{L}_{\ab}^{p,r}$ and so there is
$g \in L^p(0,\pi)$ such that $L_{\ab}^{-r/2}f = L_{\ab}^{-s/2}g = L_{\ab}^{-r/2} L_{\ab}^{-(s-r)/2}g$,
see \eqref{comp}. Since the Riesz potentials are injective (see \cite[Proposition 2.4]{L2}), it follows
that $f = L_{\ab}^{-(s-r)/2}g$. This implies $f \in \mathcal{L}_{\ab}^{p,s-r}$ and, consequently,
$\mathcal{L}_{\ab}^{p,s-r} = L^p(0,\pi)$. A contradiction with \eqref{red}.

It remains to justify \eqref{red}. Suppose that $\mathcal{L}_{\ab}^{p,r} = L^p(0,\pi)$ for some
rational $r > 0$. We will derive a contradiction. Take $1 \le m \in \mathbb{N}$ such that $mr$ is
integer and pick an arbitrary $f \in L^p(0,\pi)$. Then, taking into account what we have assumed, 
$f \in \mathcal{L}_{\ab}^{p,r}$ and so there is $g_1 \in L^p(0,\pi)$ such that $f = L_{\ab}^{-r/2}g_1$.
Similarly, we can find $g_2 \in L^p(0,\pi)$ such that $g_1 = L_{\ab}^{-r/2}g_2$. Iterating this procedure
we get, see \eqref{comp}, $f = (L_{\ab}^{-r/2})^m g_m = L_{\ab}^{-mr/2}g_m$ for some $g_m \in L^p(0,\pi)$.
Consequently, $f \in \mathcal{L}_{\ab}^{p,mr}$. According to \cite[Theorem A]{L2}, 
$\mathcal{L}_{\ab}^{p,mr} = W_{\ab}^{p,mr}$, the Jacobi Sobolev space. We conclude that
$L^p(0,\pi) = W_{\ab}^{p,mr}$. This means, in particular, that $D_{\ab}f \in L^p(0,\pi)$ for each
$f \in L^p(0,\pi)$. But the latter is false, as can be easily seen by taking either $f \equiv 1$
in case $(\ab) \neq (-1/2,-1/2)$ or $f(\theta) = \log \theta$ otherwise. The desired contradiction follows. 

To show (ii) we may assume, for symmetry reasons, that $r < s$. Then it is straightforward to see that
the operator
$$
L_{\ab}^{-(s-r)/2} \colon \mathcal{L}_{\ab}^{p,r} \longrightarrow \mathcal{L}_{\ab}^{p,s}
$$
is an isometric isomorphism, see \eqref{comp}.

We pass to showing (iii). Observe that it is enough to treat the case $k=1$, since then the general case
is obtained by simple iterations. To see that $D_{\ab}$ is bounded from $\mathcal{L}_{\ab}^{p,s}$ to
$\mathcal{L}_{\alpha+1,\beta+1}^{p,s-1}$ for $s>1$, it suffices to prove that
$$
\big\| L_{\alpha+1,\beta+1}^{(s-1)/2} D_{\ab} L_{\ab}^{-s/2} g \big\|_p \lesssim \|g\|_p, 
	\qquad g \in S_{\ab}.
$$
Taking into account the identities
$$
D_{\ab} \phi_{n}^{\ab} = - \sqrt{\lambda_n^{\ab}-\lambda_0^{\ab}} \, \phi_{n-1}^{\alpha+1,\beta+1},
$$
see \cite[(5)]{L2}, and $\lambda_n^{\ab} = \lambda_{n-1}^{\alpha+1,\beta+1}$, $n \ge 1$, we write
$$
L_{\alpha+1,\beta+1}^{(s-1)/2} D_{\ab} L_{\ab}^{-s/2} g = - \sum_{n=1}^{\infty} \bigg( \frac{\lambda_n^{\ab}-\lambda_0^{\ab}}{\lambda_n^{\ab}}\bigg)^{1/2}
 a_n^{\ab}(g)
	 \phi_{n-1}^{\alpha+1,\beta+1},
		\qquad g \in S_{\ab}.
$$
Now an application of Lemma \ref{mk} leads directly to the desired conclusion. The fact that the function
$h(n) = (1-\lambda_0^{\ab}/\lambda_{n}^{\ab})^{1/2}$ indeed satisfies the assumptions of Lemma \ref{mk}
is verified by arguments analogous to those in the proof of \cite[Proposition 3.4]{L2}.

Finally, (iv) is a consequence of (iii) and the fact that $L_{\ab}^{-k/2}$ is bounded from
$\mathcal{L}_{\ab}^{p,s}$ to $\mathcal{L}_{\ab}^{p,s+k}$.
\end{proof}

Our next result corresponds to the classical embedding theorem due to Sobolev (the latter can be found,
for instance, in \cite[Chapter V]{stein}). 
Recall that for integer values of $s$, say $s=m$, the potential spaces
$\mathcal{L}_{\ab}^{p,m}$ coincide with the Jacobi Sobolev spaces $W_{\ab}^{p,m}$ investigated in \cite{L2}.
\begin{thm} \label{thm:embed}
Let $\ab > -1$, $p \in E(\ab)$ and $1\le q < p(\ab)$.
\begin{itemize}
\item[(i)]
If $s > 0$ is such that $1/q > 1/p -s$, then $\mathcal{L}_{\ab}^{p,s} \subset L^q(0,\pi)$ and
\begin{equation} \label{embest}
\|f\|_{q} \lesssim \|f\|_{\mathcal{L}_{\ab}^{p,s}}, \qquad f \in \mathcal{L}_{\ab}^{p,s}.
\end{equation}
\item[(ii)]
If $\ab \ge -1/2$ and $s > 1/p$, then $\mathcal{L}_{\ab}^{p,s} \subset C(0,\pi)$ and \eqref{embest}
holds with $q=\infty$.
\end{itemize}
\end{thm}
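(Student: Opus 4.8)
The plan is to read off both assertions from the $L^p$--$L^q$ mapping properties of the potential operators collected in Proposition~\ref{prop:LpLq}, taken with the exponent $\sigma=s/2$, and --- for the continuity statement in (ii) --- to combine this with the density of $S_{\ab}$ in $\mathcal{L}_{\ab}^{p,s}$.

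For (i), I would start from an arbitrary $f\in\mathcal{L}_{\ab}^{p,s}$ and use the definition of the potential space to write $f=L_{\ab}^{-s/2}g$ (or $f=(\id+L_{\ab})^{-s/2}g$ when $\alpha+\beta=-1$) with $g\in L^p(0,\pi)$ and $\|g\|_p=\|f\|_{\mathcal{L}_{\ab}^{p,s}}$. The hypothesis $1/q>1/p-s$ in particular yields $1/q\ge 1/p-2\sigma$ for $\sigma=s/2$, so, since $p>p'(\ab)$ and $q<p(\ab)$, Proposition~\ref{prop:LpLq} guarantees that the pertinent potential operator is bounded from $L^p(0,\pi)$ to $L^q(0,\pi)$. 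This gives $\|f\|_q\lesssim\|g\|_p=\|f\|_{\mathcal{L}_{\ab}^{p,s}}$, which simultaneously establishes the inclusion $\mathcal{L}_{\ab}^{p,s}\subset L^q(0,\pi)$ and the estimate~\eqref{embest}.

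For (ii), the same scheme applies with the second clause of Proposition~\ref{prop:LpLq}: when $\ab\ge-1/2$ and $s>1/p$, i.e.\ $1/p<2\sigma$ with $\sigma=s/2$, the potential operator maps $L^p(0,\pi)$ boundedly into $L^\infty(0,\pi)$, so $\|f\|_\infty\lesssim\|f\|_{\mathcal{L}_{\ab}^{p,s}}$ for every $f\in\mathcal{L}_{\ab}^{p,s}$. What is left is to upgrade this $L^\infty$ bound to actual continuity. Here I would invoke that $S_{\ab}$ is dense in $\mathcal{L}_{\ab}^{p,s}$ (\cite[Corollary 2.6]{L2}) and that, for $\ab\ge-1/2$, each $\phi_n^{\ab}=\Psi^{\ab}\,\P_n^{\ab}$ --- hence each element of $S_{\ab}$ --- is continuous and bounded on $(0,\pi)$. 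Given $f\in\mathcal{L}_{\ab}^{p,s}$, I would pick $f_k\in S_{\ab}$ with $f_k\to f$ in $\mathcal{L}_{\ab}^{p,s}$; the $L^\infty$ estimate already obtained forces $(f_k)$ to be uniformly Cauchy on $(0,\pi)$, hence uniformly convergent to some $\widetilde f\in C(0,\pi)$, while by part (i) (equivalently, by the continuous inclusion $\mathcal{L}_{\ab}^{p,s}\subset L^p(0,\pi)$) one also has $f_k\to f$ in $L^p(0,\pi)$; comparing the two limits gives $f=\widetilde f$ a.e., so $f$ possesses a continuous representative and $\|f\|_\infty\lesssim\|f\|_{\mathcal{L}_{\ab}^{p,s}}$.

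I do not expect a serious obstacle here: the analytic substance sits in Proposition~\ref{prop:LpLq}, which is borrowed from \cite{Nowak&Roncal}. The only points needing (routine) attention are the passage from $L^\infty$ to $C(0,\pi)$ in (ii), handled by the density argument above, and keeping the two cases $\alpha+\beta\neq-1$ and $\alpha+\beta=-1$ of the definition of $\mathcal{L}_{\ab}^{p,s}$ apart; note though that $\ab\ge-1/2$ together with $\alpha+\beta=-1$ forces $\alpha=\beta=-1/2$, so in part (ii) the Bessel-potential case is a single pair of parameters and is in any event covered uniformly by Proposition~\ref{prop:LpLq}.
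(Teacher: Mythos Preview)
Your proposal is correct and follows essentially the same approach as the paper: both parts are read off directly from Proposition~\ref{prop:LpLq} with $\sigma=s/2$, and the continuity in (ii) is obtained by approximating in $\mathcal{L}_{\ab}^{p,s}$ by elements of $S_{\ab}$ and using the already-established $L^\infty$ bound to pass to a uniform limit. Your write-up is in fact slightly more careful than the paper's in explicitly identifying the uniform limit $\widetilde f$ with $f$ via the $L^p$ convergence.
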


\begin{proof}
We assume that $\alpha+\beta \neq -1$, the opposite case is analogous.
Let $f \in \mathcal{L}_{\ab}^{p,s}$. Then there exists $g \in L^p(0,\pi)$ such that $f = L_{\ab}^{-s/2}g$.
According to Proposition \ref{prop:LpLq}, the potential operator $L_{\ab}^{-s/2}$ is of strong type
$(p,q)$ for $p$ and $q$ admitted in (i) and (ii) (to be precise, in (ii) $q=\infty$).
Thus $f \in L^q(0,\pi)$ and \eqref{embest} holds. 

It remains to show that, under the assumptions of (ii), $f$ is continuous.
Since $S_{\ab}$ is a dense subspace of $\mathcal{L}_{\ab}^{p,s}$, there
exists a sequence $\{f_n\}\subset S_{\ab}$ such that $f_n \to f$ in $\mathcal{L}_{\ab}^{p,s}$. Then
$$
\|f-f_n\|_{\infty} \lesssim \|f-f_n\|_{\mathcal{L}_{\ab}^{p,s}} \to 0, \qquad n \to \infty,
$$
and we see that $f$ is a uniform limit of continuous functions.
\end{proof}

\section{Characterization by fractional square functions} \label{sec:char}

Let $\ab > -1$. Following Betancor et.\ al.\ \cite{betsq}, we consider a pair of fractional square
functions
\begin{align*}
\mathfrak g_{\ab}^{\gamma}(f)(\theta) & = \bigg(\int_0^{\infty}\big|t^{\gamma}\partial_t^{\gamma}
	 H_t^{\ab}f(\theta)\big|^2\frac{dt}{t}\bigg)^{1/2}, \qquad \gamma > 0, \\
\mathfrak g_{\ab}^{\gamma,k}(f)(\theta) & =\bigg(\int_0^{\infty}\Big|t^{k-\gamma}
	\frac{\partial^k}{\partial t^k} H_t^{\ab}f(\theta)\Big|^2\frac{dt}{t}\bigg)^{1/2},
		\qquad 0<\gamma<k, \quad k \in \mathbb{N}.
\end{align*}
Here $\partial_t^{\gamma}$ denotes a Caputo type fractional derivative given, for suitable $F$, by
\begin{equation}\label{fd}
\partial_t^{\gamma}F(t)=\frac{1}{\Gamma(m-\gamma)}\int_0^{\infty}
	\frac{\partial^m}{\partial t^m}F(t+s)\,s^{m-\gamma-1}\,ds,\qquad t> 0,
\end{equation}
where $m=\lfloor\gamma\rfloor+1$, $\lfloor\cdot\rfloor$ being the floor function.
The study of square functions involving $\partial_t^{\gamma}$ goes back to Segovia and Wheeden
\cite{Seg&Whe}, where the classical setting was considered.

Note that $\mathfrak{g}_{\ab}^{\gamma}(f)$ and $\mathfrak{g}_{\ab}^{\gamma,k}(f)$ are well defined pointwise
for $f \in L^p(0,\pi)$, $p > p'(\ab)$. This is clear in case of $\mathfrak{g}_{\ab}^{\gamma,k}$, since
$H_t^{\ab}$ is smooth in $t>0$. To see this property for $\mathfrak{g}_{\ab}^{\gamma}$, we observe that
$\partial_t^{\gamma}H_t^{\ab}f$ is well defined pointwise if $f$ is as above. In fact
\begin{equation} \label{dgHt}
\partial_t^\gamma H_t^{\ab}f(\theta)= (-1)^m\sum_{n=0}^{\infty}
 \big(\lambda_n^{\ab}\big)^{\gamma/2}\exp\Big(-t\sqrt{\lambda_{n}^{\ab}}\Big)\,a_n^{\ab}(f)\,
  \phi_n^{\ab}(\theta),
\end{equation}
and the series converges for each $t > 0$ and $\theta \in (0,\pi)$. This follows by term-by-term
differentiation and integration of the series defining $H_t^{\ab}f$. 
Such manipulations are indeed legitimate, as can be easily checked with the aid of \eqref{bphi} and
the resulting polynomial growth in $n$ of $a_n^{\ab}(f)$.

The first main result of this section is the following characterization of the Jacobi potential 
spaces in terms of $\mathfrak{g}_{\ab}^{\gamma,k}$.
\begin{thm}\label{equivfun}
Let $\ab > -1$, $p\in E(\ab)$ and assume that $\alpha+\beta\neq-1$. 
Fix $0 < \gamma < k$ with $k\in \mathbb{N}$.
Then $f\in\mathcal{L}_{\ab}^{p,\gamma}$ if and only if $f\in L^p(0,\pi)$ and
$\mathfrak{g}_{\ab}^{\gamma,k}(f)\in L^p(0,\pi)$. Moreover,
$$
\|f\|_{\mathcal{L}_{\ab}^{p,\gamma}}  \simeq
\big\|\mathfrak{g}_{\ab}^{\gamma,k}(f)\big\|_{p}, \qquad f\in \mathcal{L}_{\ab}^{p,\gamma}.
$$
\end{thm}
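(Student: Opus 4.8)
The plan is to reduce the characterization to an $L^p$-boundedness statement for a vector-valued singular integral, using the machinery available in the Jacobi setting. First I would show the ``easy'' direction: if $f\in\mathcal{L}_{\ab}^{p,\gamma}$, write $f=L_{\ab}^{-\gamma/2}g$ with $g\in L^p(0,\pi)$. Using the spectral series \eqref{JPser} together with \eqref{dgHt} (or its integer-order analogue), one computes that
$$
t^{k-\gamma}\frac{\partial^k}{\partial t^k}H_t^{\ab}f = t^{k-\gamma}\frac{\partial^k}{\partial t^k}H_t^{\ab}L_{\ab}^{-\gamma/2}g
= \sum_{n=0}^{\infty} m_{k,\gamma}\big(t\sqrt{\lambda_n^{\ab}}\,\big)\,a_n^{\ab}(g)\,\phi_n^{\ab},
$$
where $m_{k,\gamma}(u)=(-1)^k u^{k-\gamma}e^{-u}$ (up to the normalizing power of the eigenvalue which is absorbed since $(\lambda_n^{\ab})^{-\gamma/2}(\lambda_n^{\ab})^{k/2}t^{k-\gamma}$ rearranges to a function of $t\sqrt{\lambda_n^{\ab}}$). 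Thus $\mathfrak{g}_{\ab}^{\gamma,k}(f)(\theta)=\big(\int_0^\infty |G_t g(\theta)|^2\,dt/t\big)^{1/2}$, where $\{G_t\}$ is a family of multiplier operators of Hörmander--Mikhlin--Laplace transform type. One then invokes the known $L^p(0,\pi)$-boundedness, $p\in E(\ab)$, of the vertical $g$-function $g\mapsto\big(\int_0^\infty|G_tg|^2\,dt/t\big)^{1/2}$ — this is exactly Theorem \ref{ogrgff} announced in the introduction and proved in Section \ref{sec:g-functions} — to get $\|\mathfrak{g}_{\ab}^{\gamma,k}(f)\|_p\lesssim\|g\|_p=\|f\|_{\mathcal{L}_{\ab}^{p,\gamma}}$, and also $\|f\|_p\lesssim\|f\|_{\mathcal{L}_{\ab}^{p,\gamma}}$ by Theorem \ref{thm:struc}(i).

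For the converse I would use a polarization / reproducing-formula argument. Starting from a numerical identity of the form $\int_0^\infty m_{k,\gamma}(u)\overline{m_{k,\gamma'}(u)}\,du/u = c_{k,\gamma}\,\delta$-type constant (so that integrating $|m_{k,\gamma}(t\sqrt{\lambda_n^{\ab}})|^2\,dt/t$ gives a constant independent of $n$), one obtains for $f\in S_{\ab}$ a Calderón-type reproducing formula
$$
f = c\int_0^\infty \overline{G_t}\,G_t f\,\frac{dt}{t},
$$
which, paired against an arbitrary $h\in L^{p'}(0,\pi)\cap S_{\ab}$ and combined with the $L^{p'}$-boundedness of the dual (adjoint) vertical square function, yields
$$
|\langle f,h\rangle| \le \int_0^\pi\!\!\int_0^\infty |G_tf(\theta)|\,|G_t^* h(\theta)|\,\frac{dt}{t}\,d\theta
\lesssim \big\|\mathfrak{g}_{\ab}^{\gamma,k}(f)\big\|_p\,\|h\|_{p'}.
$$
This gives $\|f\|_p\lesssim\|\mathfrak{g}_{\ab}^{\gamma,k}(f)\|_p$ for $f\in S_{\ab}$, and then for general $f\in L^p(0,\pi)$ with $\mathfrak{g}_{\ab}^{\gamma,k}(f)\in L^p$ by the density of $S_{\ab}$ in $L^p(0,\pi)$ (valid since $p<p(\ab)$). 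To upgrade from ``$f\in L^p$'' to ``$f\in\mathcal{L}_{\ab}^{p,\gamma}$'', I would show that $g:=L_{\ab}^{\gamma/2}f$, defined a priori only in a distributional/spectral sense, actually lies in $L^p(0,\pi)$: one writes $g$ as a limit (in $L^p$) of $L_{\ab}^{\gamma/2}H_{\ep}^{\ab}f$ as $\ep\to0^+$ and controls its $L^p$-norm by $\|\mathfrak{g}_{\ab}^{\gamma,k}(f)\|_p$ through the same reproducing-formula duality, now with the extra multiplier $(\lambda_n^{\ab})^{\gamma/2}$ absorbed into the Laplace-transform-type symbol; closedness of $L_{\ab}^{-\gamma/2}$ (equivalently injectivity of the potential, \cite[Proposition 2.4]{L2}) then identifies $f=L_{\ab}^{-\gamma/2}g$, so $f\in\mathcal{L}_{\ab}^{p,\gamma}$ with $\|f\|_{\mathcal{L}_{\ab}^{p,\gamma}}=\|g\|_p\lesssim\|\mathfrak{g}_{\ab}^{\gamma,k}(f)\|_p$.

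The main obstacle, and the technical heart of the argument, is establishing the $L^p(0,\pi)$-boundedness of the vertical fractional $g$-functions $\mathfrak{g}_{\ab}^{\gamma,k}$ (and the companion needed in the converse) for the full range $p\in E(\ab)$ — i.e.\ Theorem \ref{ogrgff}. Because of the pencil phenomenon the weights $\Psi^{\ab}$ enter, and the natural framework is that of vector-valued Calderón--Zygmund operators on the space of homogeneous type $((0,\pi),|\cdot|,d\theta)$ (or, after the change of variables $x=\cos\theta$, on $([-1,1],\text{Jacobi measure})$). Concretely one has to: identify the kernel of $g\mapsto\{G_tg\}_{t>0}$ as an $L^2(dt/t)$-valued kernel, prove the requisite size and Hörmander-type smoothness estimates using the sharp Poisson--Jacobi kernel bounds (available from \cite{NS2,parameters}), establish $L^2$-boundedness via Plancherel and the numerical identity above, and then appeal to vector-valued CZ theory plus, for the endpoint/pencil range, a separate weighted or interpolation argument. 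Once Theorem \ref{ogrgff} is in hand, everything else in the proof above is soft functional analysis (spectral multiplier bookkeeping, Calderón reproducing formula, duality, density of $S_{\ab}$, and injectivity of $L_{\ab}^{-\gamma/2}$); I would relegate the kernel estimates to Section \ref{sec:g-functions} and keep the proof of Theorem \ref{equivfun} itself at the level of the reduction described here.
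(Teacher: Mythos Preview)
Your forward direction is correct and matches the paper exactly: the identity you compute is precisely Lemma~\ref{lemma2.2}, namely $\mathfrak{g}_{\ab}^{\gamma,k}(f)=\mathfrak{g}_{\ab}^{k-\gamma}(L_{\ab}^{\gamma/2}f)=\mathfrak{g}_{\ab}^{k-\gamma}(g)$, and then Theorem~\ref{ogrgff} gives $\|\mathfrak{g}_{\ab}^{\gamma,k}(f)\|_p\lesssim\|g\|_p=\|f\|_{\mathcal{L}_{\ab}^{p,\gamma}}$.

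Your converse, however, has two genuine gaps. First, the polarization you write pairs $G_tf$ with $G_t^*h$, but in this self-adjoint setting this produces $\|\mathfrak{g}_{\ab}^{\gamma,k}(f)\|_p\,\|\mathfrak{g}_{\ab}^{\gamma,k}(h)\|_{p'}$, and $\mathfrak{g}_{\ab}^{\gamma,k}$ is \emph{not} bounded on $L^{p'}$---that is the whole point of the theorem. The density step fails for the same reason: if $f_n\in S_{\ab}$ and $f_n\to f$ in $L^p$, you have no control over $\mathfrak{g}_{\ab}^{\gamma,k}(f_n)$, since this square function is unbounded on $L^p$. The paper avoids this by routing everything through the \emph{bounded} fractional square function $\mathfrak{g}_{\ab}^{k-\gamma}$: the polarization (Proposition~\ref{isomf}) is done for $\mathfrak{g}_{\ab}^{k-\gamma}$, giving the two-sided bound $\|\cdot\|_p\simeq\|\mathfrak{g}_{\ab}^{k-\gamma}(\cdot)\|_p$ (Theorem~\ref{lpequivf}), and then the identity $\mathfrak{g}_{\ab}^{\gamma,k}(f)=\mathfrak{g}_{\ab}^{k-\gamma}(L_{\ab}^{\gamma/2}f)$ transfers this to the potential-space norm. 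Second, your claim that $L_{\ab}^{\gamma/2}H_{\epsilon}^{\ab}f$ converges \emph{in $L^p$} as $\epsilon\to 0^+$ is unjustified: you only have uniform $L^p$ bounds, not a Cauchy condition. The paper handles this with Banach--Alaoglu, extracting a weak* limit $F$ of $F_{t_n}:=L_{\ab}^{\gamma/2}H_{t_n}^{\ab}f$ along a subsequence, and then identifying $f=L_{\ab}^{-\gamma/2}F$ via the strong convergence $H_{t_n}^{\ab}f\to f$ and the $L^p$-continuity of $L_{\ab}^{-\gamma/2}$.

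So your regularize-by-semigroup strategy is the right one and coincides with the paper's; what you are missing is (i) that the duality must be run with $\mathfrak{g}_{\ab}^{k-\gamma}$ rather than with $\mathfrak{g}_{\ab}^{\gamma,k}$ itself, and (ii) that the passage to the limit requires weak* compactness, not a direct $L^p$ limit. The paper also inserts an auxiliary Lemma~\ref{2.6} ($\|\mathfrak{g}_{\ab}^{\gamma,l}\|_p\lesssim\|\mathfrak{g}_{\ab}^{\gamma,k}\|_p$ for $l\ge k$) together with the pointwise monotonicity $\mathfrak{g}_{\ab}^{\gamma,l}(H_t^{\ab}f)\le \mathfrak{g}_{\ab}^{\gamma,l}(f)$ to obtain the uniform bound on $\|F_t\|_p$; you should expect to need an analogous ingredient.
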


\begin{remark}\label{uwaga}
To get a similar characterization in the singular case $\alpha+\beta=-1$ one has to modify suitably
the square function $\mathfrak{g}_{\ab}^{\gamma,k}$. The corresponding statement can be found at the end
of this section, see Theorem \ref{equivfun'}.
\end{remark}

To prove Theorem \ref{equivfun} we follow a general strategy presented in \cite{betsq}.
The main difficulty in this approach is showing that the fractional square function 
$\mathfrak{g}_{\ab}^{\gamma}$ preserves $L^p$ norms, as stated below.

\begin{thm}\label{lpequivf}
Let $\ab>-1$, $p\in E(\ab)$ and $\gamma>0$. Then
\begin{equation*}
\|f\|_p\simeq\big\|\mathfrak{g}_{\ab}^{\gamma}(f)\big\|_p+\chi_{\{\alpha+\beta=-1\}}\,\big|a_0^{\ab}(f)\big|,
	\qquad f\in L^p(0,\pi).
\end{equation*}
\end{thm}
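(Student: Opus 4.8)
The plan is to reduce the assertion to $L^p$-boundedness of a vector-valued Calderón--Zygmund operator together with a dual/polarization argument, essentially as in \cite{betsq}. First I would treat the ``$\gtrsim$'' direction, i.e.\ $\|\mathfrak g_{\ab}^{\gamma}(f)\|_p \lesssim \|f\|_p$ (plus the trivial control $|a_0^{\ab}(f)| \lesssim \|f\|_p$, which follows from H\"older's inequality and $\phi_0^{\ab}\in L^{p'}(0,\pi)$ for $p\in E(\ab)$). In view of \eqref{dgHt}, the operator $f \mapsto \{t^{\gamma}\partial_t^{\gamma}H_t^{\ab}f\}_{t>0}$ is the $g$-function operator $\mathfrak g_{\ab}^{\gamma}$, and its $L^p$-boundedness is exactly the statement of Theorem \ref{ogrgff} announced in the introduction (proved in Section \ref{sec:g-functions}); so I would simply invoke that result here. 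This gives one inequality, for all $\ab>-1$ and $p\in E(\ab)$, with no need for the correction term on this side.

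For the reverse inequality $\|f\|_p \lesssim \|\mathfrak g_{\ab}^{\gamma}(f)\|_p + \chi_{\{\alpha+\beta=-1\}}|a_0^{\ab}(f)|$ I would use duality via a polarized square-function identity. The point is that, by the spectral theorem, for $f\in S_{\ab}$ and a suitable $h\in S_{\ab}$ one has
$$
\int_0^\pi f(\theta)\,\overline{h(\theta)}\,d\theta
 = c_\gamma \int_0^\infty \int_0^\pi t^{\gamma}\partial_t^{\gamma}H_t^{\ab}f(\theta)\,
   \overline{t^{\gamma}\partial_t^{\gamma}H_t^{\ab}h(\theta)}\,d\theta\,\frac{dt}{t}
   + (\text{bottom eigenvalue term}),
$$
where $c_\gamma$ is a positive constant coming from the Beta-type integral $\int_0^\infty (\lambda t^2)^{\gamma}e^{-2t\sqrt\lambda}\,\frac{dt}{t}$, which is a nonzero constant independent of $\lambda$ for $\lambda>0$; when $\alpha+\beta\neq-1$ all eigenvalues $\lambda_n^{\ab}$ are strictly positive and the formula is clean, whereas when $\alpha+\beta=-1$ the $n=0$ mode has $\lambda_0^{\ab}=0$ and contributes nothing to the square function, which is precisely why the term $a_0^{\ab}(f)$ must be added back by hand. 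Applying the Cauchy--Schwarz inequality in $d\theta\,\frac{dt}{t}$ followed by H\"older's inequality in $\theta$, the right-hand side is bounded by $\|\mathfrak g_{\ab}^{\gamma}(f)\|_p\,\|\mathfrak g_{\ab}^{\gamma}(h)\|_{p'}$, and then by the already-established forward estimate (now with exponent $p'$, which lies in $E(\ab)$ since $E(\ab)$ is symmetric under conjugation) this is $\lesssim \|\mathfrak g_{\ab}^{\gamma}(f)\|_p\,\|h\|_{p'}$. Taking the supremum over $h\in S_{\ab}$ with $\|h\|_{p'}\le1$, and using density of $S_{\ab}$ in $L^{p'}(0,\pi)$ for $p'<p(\ab)$, yields $\|f\|_p\lesssim \|\mathfrak g_{\ab}^{\gamma}(f)\|_p + \chi_{\{\alpha+\beta=-1\}}|a_0^{\ab}(f)|$ for $f\in S_{\ab}$, and a final density/approximation step extends it to all $f\in L^p(0,\pi)$.

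The main obstacle is the forward $L^p$ estimate $\|\mathfrak g_{\ab}^{\gamma}(f)\|_p\lesssim\|f\|_p$ itself, i.e.\ Theorem \ref{ogrgff}; here one must realize $\mathfrak g_{\ab}^{\gamma}$ as a vector-valued Calder\'on--Zygmund operator on the space of homogeneous type $((0,\pi),|\cdot|,d\theta)$ with values in $L^2(\frac{dt}{t})$, which requires kernel estimates for $t^{\gamma}\partial_t^{\gamma}H_t^{\ab}(\theta,\varphi)$ and its $\theta$-derivative --- these follow from the sharp Poisson--Jacobi kernel bounds cited after \eqref{JPser} (from \cite[Theorem A.1]{NS2} and \cite[Theorem 6.1]{parameters}), combined with the integral representation \eqref{fd} of the fractional derivative --- plus an $L^2$-boundedness input, which is immediate from \eqref{dgHt} and Plancherel since $|(\lambda_n^{\ab})^{\gamma/2}t^{\gamma}e^{-t\sqrt{\lambda_n^{\ab}}}|$ is uniformly bounded and $\int_0^\infty \lambda^{\gamma}t^{2\gamma}e^{-2t\sqrt\lambda}\frac{dt}{t}$ is a finite constant. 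Since that argument is carried out in full in Section \ref{sec:g-functions}, in the present section I would only need to assemble the two directions as above; a minor secondary point to handle carefully is the justification of the term-by-term manipulations and the interchange of the $\int_0^\infty\frac{dt}{t}$ integral with the summation in the polarization identity, but this is routine given \eqref{bphi} and the resulting polynomial growth of the Fourier--Jacobi coefficients.
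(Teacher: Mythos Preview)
Your derivation of Theorem \ref{lpequivf} from Theorem \ref{ogrgff} is correct and is exactly what the paper does: the polarized identity you write down is the paper's Proposition \ref{isomf}, and the paper then runs Cauchy--Schwarz in $L^2(t^{2\gamma-1}dt)$, H\"older in $\theta$, and the $L^{p'}$-bound for $\mathfrak g_{\ab}^{\gamma}$, just as you outline (the paper works directly with $f,g\in L^2$ rather than passing through $S_{\ab}$ and density, but this is cosmetic).

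Where your sketch has a genuine gap is the side remark on how Theorem \ref{ogrgff} itself is obtained. You propose to realize $\mathfrak g_{\ab}^{\gamma}$ directly as a vector-valued Calder\'on--Zygmund operator on the space $((0,\pi),|\cdot|,d\theta)$. That cannot succeed as stated: if the standard kernel estimates held relative to Lebesgue measure, CZ theory would yield boundedness on $L^p(0,\pi)$ for \emph{all} $1<p<\infty$, contradicting the pencil phenomenon, which forces the restriction $p\in E(\ab)$ when $\alpha<-1/2$ or $\beta<-1/2$. Concretely, $H_t^{\ab}(\theta,\varphi)=\Psi^{\ab}(\theta)\Psi^{\ab}(\varphi)\,\mathcal H_t^{\ab}(\theta,\varphi)$, and the factors $\Psi^{\ab}$ blow up at the endpoints in that parameter range, so the standard estimates \eqref{gr}--\eqref{grad} with respect to $d\theta$ fail. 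The paper's route (Section \ref{sec:g-functions}) is instead to transfer to the Jacobi \emph{polynomial} setting via $\mathfrak g_{\ab}^{\gamma}(f)=\Psi^{\ab}\,g_{\ab}^{\gamma}(\Psi^{-\alpha-1,-\beta-1}f)$, show that $G_{\ab}^{\gamma}$ is a CZ operator on the space of homogeneous type $((0,\pi),d\mu_{\ab},|\cdot|)$ (Theorem \ref{thm:CZ}), obtain weighted bounds on $L^p(w\,d\mu_{\ab})$ for all $w\in A_p^{\ab}$ (Theorem \ref{ogrgf}), and finally specialize to the power weight $w_{\ab}=(\Psi^{\ab})^p/\Psi^{2\alpha+1/2,2\beta+1/2}$, which belongs to $A_p^{\ab}$ precisely for $p\in E(\ab)$. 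This detour through the $\mu_{\ab}$-setting is not optional; it is what produces the correct range of $p$.
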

For the time being, in this section we assume that Theorem \ref{lpequivf} holds and postpone its proof
until Section \ref{sec:g-functions}. Then to show Theorem \ref{equivfun} it suffices to 
ensure that the general arguments in \cite{betsq} work when specified to the Jacobi framework.
We begin with two auxiliary results which appear almost explicitly in \cite{betsq}. 

\begin{lem}\label{2.6}
Let $\ab>-1$, $p\in E(\ab)$ and assume that $0<\gamma<k\leq l$ with $k, l\in\mathbb{N}$. 
Then
$$
\big\|\mathfrak g_{\ab}^{\gamma, l}(f)\big\|_{p}\lesssim\big\|\mathfrak g_{\ab}^{\gamma, k}(f)\big\|_{p},
 \qquad f\in L^p(0,\pi).
$$
\end{lem}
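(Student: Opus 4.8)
The plan is to reduce the inequality $\|\mathfrak g_{\ab}^{\gamma,l}(f)\|_p \lesssim \|\mathfrak g_{\ab}^{\gamma,k}(f)\|_p$ to a pointwise (or at least $L^2(dt/t)$-pointwise) relation between the two vector-valued quantities $t \mapsto t^{l-\gamma}\partial_t^l H_t^{\ab}f(\theta)$ and $t\mapsto t^{k-\gamma}\partial_t^k H_t^{\ab}f(\theta)$. The natural device is the subordination-type identity expressing a higher-order $t$-derivative of a function in the image of the Poisson semigroup in terms of a lower-order one: writing $l = k + (l-k)$ and using that $\partial_t^l H_t^{\ab}f = \partial_t^{l-k}\big(\partial_t^k H_t^{\ab}f\big)$ together with the semigroup property $H_t^{\ab} = H_{t/2}^{\ab}\circ H_{t/2}^{\ab}$, one can write $\partial_t^l H_t^{\ab}f(\theta)$ as an integral over $s>0$ of $\partial_s^k H_s^{\ab}f(\theta)$ against an explicit kernel coming from repeated differentiation of the one-dimensional Poisson kernel $P_t(s) = c\, t(t^2+s^2)^{-1}$ or, equivalently, from the classical formula $\partial_t^j e^{-t\sqrt\lambda} = (-\sqrt\lambda)^j e^{-t\sqrt\lambda}$ fed into a Hardy-type averaging operator. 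This is exactly the kind of manipulation carried out in \cite{betsq}, so I would invoke it.

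Concretely, first I would record that, by \eqref{dgHt} and term-by-term differentiation (legitimate by \eqref{bphi} and the polynomial growth of $a_n^{\ab}(f)$), for $f \in L^p(0,\pi)$, $p > p'(\ab)$, and $j \in \mathbb N$ one has
$$
\partial_t^{j} H_t^{\ab}f(\theta) = (-1)^j \sum_{n=0}^\infty \big(\lambda_n^{\ab}\big)^{j/2}\exp\!\Big(-t\sqrt{\lambda_n^{\ab}}\Big) a_n^{\ab}(f)\,\phi_n^{\ab}(\theta).
$$
Then the function $G(t,\theta) := t^{k-\gamma}\partial_t^k H_t^{\ab}f(\theta)$ satisfies, for each fixed $\theta$, a clean scaling/convolution relation: differentiating $j := l-k$ further times and multiplying by $t^{l-\gamma}$ produces $t^{l-\gamma}\partial_t^l H_t^{\ab}f(\theta)$, which by an elementary computation with the integral representation of $H_s^{\ab}$ as a subordination of the heat semigroup — or more simply by the identity $\lambda^{l/2} e^{-t\sqrt\lambda} = \frac{(-1)^j}{\Gamma(\text{...})}\,t^{-j}\int \ldots$, i.e. the standard fact that $s^{k-\gamma}(-\partial_s)^k e^{-s\sqrt\lambda}$ and $t^{l-\gamma}(-\partial_t)^l e^{-t\sqrt\lambda}$ are related by a fixed bounded integral operator $T_j$ on $L^2((0,\infty),dt/t)$ independent of $\lambda$ — can be written as
$$
t^{l-\gamma}\partial_t^l H_t^{\ab}f(\theta) = \big(T_j\, G(\cdot,\theta)\big)(t),
$$
where $T_j$ is bounded on $L^2((0,\infty),dt/t)$ with norm depending only on $\gamma,k,l$. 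Taking $L^2(dt/t)$ norms in $t$ and then $L^p$ norms in $\theta$ gives the claim. The boundedness of $T_j$ on $L^2(dt/t)$ is a scale-invariant one-dimensional fact (it reduces, after the substitution $t = e^u$, to convolution on $\mathbb R$ against an integrable — indeed Schwartz-decaying — kernel, or to a Mellin multiplier that is bounded on the critical line); this is precisely \cite[Lemma/Section ...]{betsq} in the classical case and transplants verbatim because it does not see the operator $L_{\ab}$ at all.

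The main obstacle is getting the precise form of the integral operator $T_j$ right and verifying that its $L^2(dt/t)$-kernel is genuinely well-behaved when the Caputo fractional derivative $\partial_t^\gamma$ (rather than an integer derivative) is in play — one has to commute $\partial_t^{l-k}$ past the definition \eqref{fd}, keep track of the $\Gamma$-factors, and confirm uniformity in $n$ of all the constants so that the interchange of summation and the $t$-integrations is justified by \eqref{bphi}. Once the scale-invariant kernel estimate for $T_j$ is in hand, the passage to $L^p$ norms is immediate by Minkowski's integral inequality, since $T_j$ acts only in the $t$ variable and the $\theta$ variable is merely a parameter. I would therefore present the proof as: (1) the series formula for $\partial_t^l H_t^{\ab}f$; (2) the identity $t^{l-\gamma}\partial_t^l H_t^{\ab}f(\theta) = (T_j G(\cdot,\theta))(t)$ with $T_j$ explicit; (3) boundedness of $T_j$ on $L^2((0,\infty),dt/t)$; (4) conclusion by taking $L^2(dt/t)$ then $L^p(d\theta)$ norms, exactly mirroring \cite{betsq}.
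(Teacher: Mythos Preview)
Your proposed route has a genuine gap: the operator $T_j$ you describe does \emph{not} exist as a bounded map on $L^2((0,\infty),dt/t)$ when $l>k$. To see this, note that for each $\mu=\sqrt{\lambda}>0$ the function $t\mapsto t^{k-\gamma}\mu^k e^{-t\mu}$ is, up to the factor $\mu^{\gamma}$, just the dilate by $\mu$ of $\phi_k(t)=t^{k-\gamma}e^{-t}$. Any $T_j$ doing what you claim for all $\mu$ must therefore commute with dilations, i.e.\ be a Mellin multiplier, and the required symbol is
\[
m(iy)=(-1)^{l-k}\,\frac{\Gamma(l-\gamma+iy)}{\Gamma(k-\gamma+iy)},\qquad y\in\mathbb{R},
\]
which by Stirling behaves like $|y|^{l-k}$ as $|y|\to\infty$. (Already for $l=k+1$ one gets $m(iy)=k-\gamma+iy$.) Consequently the pointwise-in-$\theta$ bound $\mathfrak g_{\ab}^{\gamma,l}(f)(\theta)\lesssim \mathfrak g_{\ab}^{\gamma,k}(f)(\theta)$ cannot hold in general; the inequality in that direction is \emph{not} a purely one-dimensional $t$-variable fact. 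What is pointwise true is the \emph{reverse} estimate $\mathfrak g_{\ab}^{\gamma,k}(f)\lesssim \mathfrak g_{\ab}^{\gamma,l}(f)$, since then the Mellin symbol decays.

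The paper (following \cite[Proposition 2.6]{betsq}) handles the correct direction by using the $L^p$-boundedness of the first-order square function $\mathfrak g_{\ab}^{1}$, which is genuine harmonic analysis in the $\theta$-variable. Concretely, set $\Theta(\theta)=\{t^{k-\gamma}\partial_t^k H_t^{\ab}f(\theta)\}_{t>0}\in L^2(dt/t)$ and apply the Jacobi--Poisson semigroup componentwise: $\partial_s H_s^{\ab}\Theta(\theta)=\{t^{k-\gamma}\partial_t^{k+1}H_{t+s}^{\ab}f(\theta)\}_{t>0}$. A change of variables (Beta integral, using $k>\gamma$) gives
\[
\int_0^\infty s\,\big\|\partial_s H_s^{\ab}\Theta(\theta)\big\|_{L^2(dt/t)}^2\,ds
= c_{k,\gamma}\,\big(\mathfrak g_{\ab}^{\gamma,k+1}(f)(\theta)\big)^2,
\]
so $\mathfrak g_{\ab}^{\gamma,k+1}(f)$ is a vector-valued $\mathfrak g_{\ab}^{1}$ of $\Theta$. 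The $L^p$-bound $\|\mathfrak g_{\ab}^{1}(\Theta)\|_p\lesssim\|\Theta\|_{L^p(L^2(dt/t))}=\|\mathfrak g_{\ab}^{\gamma,k}(f)\|_p$ then yields the step $k\to k+1$, and induction finishes the proof. This is why the paper explicitly invokes $\mathfrak g_{\ab}^{1}$; a scale-invariant $t$-only argument cannot replace it.
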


\begin{proof}
We use the $L^p$-boundedness of $\mathfrak{g}_{\ab}^1$ (see Theorem \ref{lpequivf}) and repeat
the arguments from the proof of \cite[Proposition 2.6]{betsq}. Everything indeed works for general
$f \in L^p(0,\pi)$ thanks to the smoothness of $H_t^{\ab}f$ in $t >0$.
\end{proof}

\begin{lem}\label{lemma2.2}
Let $\ab>-1$, $\alpha+\beta\neq-1$, $p\in E(\ab)$ and $0<\gamma<k$ with $k \in \mathbb{N}$.
Then $\mathfrak g_{\ab}^{\gamma, k}$ is bounded on $\mathcal{L}_{\ab}^{p,\gamma}$. 
Furthermore,
$$
\mathfrak g_{\ab}^{\gamma, k}(f)=\mathfrak g_{\ab}^{k-\gamma}\big(L_{\ab}^{\,\gamma/2}f\big), 
	\qquad f\in \mathcal{L}_{\ab}^{p,\gamma},
$$
with $L_{\ab}^{\gamma/2}$ understood as the inverse of the potential operator $L_{\ab}^{-\gamma/2}$.
\end{lem}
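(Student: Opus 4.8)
The plan is to deduce the whole statement from the pointwise identity $\mathfrak g_{\ab}^{\gamma,k}(f)=\mathfrak g_{\ab}^{k-\gamma}\big(L_{\ab}^{\gamma/2}f\big)$ between the two square functions, and then read off the $L^p$ estimate from Theorem \ref{lpequivf}.

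First I would unwind the definition of the potential space. If $f\in\mathcal{L}_{\ab}^{p,\gamma}$, then $f=L_{\ab}^{-\gamma/2}g$ for some $g\in L^p(0,\pi)$, uniquely determined since $L_{\ab}^{-\gamma/2}$ is one-to-one on $L^p(0,\pi)$ (see \cite[Proposition 2.4]{L2}); thus $L_{\ab}^{\gamma/2}f=g\in L^p(0,\pi)$ and $\|L_{\ab}^{\gamma/2}f\|_p=\|f\|_{\mathcal{L}_{\ab}^{p,\gamma}}$. In particular $L_{\ab}^{\gamma/2}f$ lies in $L^p(0,\pi)$ with $p>p'(\ab)$, so the representation \eqref{dgHt} is available for $H_t^{\ab}(L_{\ab}^{\gamma/2}f)$. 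Next I would record the spectral relation
\[
a_n^{\ab}\big(L_{\ab}^{\gamma/2}f\big)=\big(\lambda_n^{\ab}\big)^{\gamma/2}\,a_n^{\ab}(f),\qquad n\ge0,
\]
which is obvious for $f\in S_{\ab}$ and extends to $f\in\mathcal{L}_{\ab}^{p,\gamma}$ by density of $S_{\ab}$ in $L^p(0,\pi)$, the $L^p$-boundedness of $L_{\ab}^{-\gamma/2}$, and the fact that each $a_n^{\ab}(\cdot)$ is a bounded functional on $L^p(0,\pi)$ (indeed $\phi_n^{\ab}\in L^{p'}(0,\pi)$ because $p>p'(\ab)$).

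With these in hand I would apply \eqref{dgHt} twice. Term-by-term differentiation of \eqref{JPser}, legitimate by \eqref{bphi} and the resulting polynomial growth of the Fourier--Jacobi coefficients (recall $f\in L^p(0,\pi)$, $p>p'(\ab)$), gives
\[
\frac{\partial^k}{\partial t^k}H_t^{\ab}f(\theta)=(-1)^k\sum_{n=0}^\infty\big(\lambda_n^{\ab}\big)^{k/2}\exp\!\Big(-t\sqrt{\lambda_n^{\ab}}\Big)\,a_n^{\ab}(f)\,\phi_n^{\ab}(\theta),
\]
while \eqref{dgHt} with $\gamma$ replaced by $k-\gamma\in(0,k)$, applied to $L_{\ab}^{\gamma/2}f$ and combined with the spectral relation above, gives
\[
\partial_t^{k-\gamma}H_t^{\ab}\big(L_{\ab}^{\gamma/2}f\big)(\theta)=(-1)^{m'}\sum_{n=0}^\infty\big(\lambda_n^{\ab}\big)^{k/2}\exp\!\Big(-t\sqrt{\lambda_n^{\ab}}\Big)\,a_n^{\ab}(f)\,\phi_n^{\ab}(\theta),
\]
with $m'=\lfloor k-\gamma\rfloor+1$, both series converging for every $t>0$ and $\theta\in(0,\pi)$. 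The two right-hand sides agree up to the sign $(-1)^{k-m'}$, hence $\big|t^{k-\gamma}\tfrac{\partial^k}{\partial t^k}H_t^{\ab}f(\theta)\big|=\big|t^{k-\gamma}\partial_t^{k-\gamma}H_t^{\ab}(L_{\ab}^{\gamma/2}f)(\theta)\big|$ for all $t,\theta$, and integrating against $dt/t$ yields $\mathfrak g_{\ab}^{\gamma,k}(f)=\mathfrak g_{\ab}^{k-\gamma}(L_{\ab}^{\gamma/2}f)$ pointwise. The boundedness then follows immediately: by this identity and Theorem \ref{lpequivf} applied with exponent $k-\gamma>0$ to $g=L_{\ab}^{\gamma/2}f$ (the correction term $\chi_{\{\alpha+\beta=-1\}}|a_0^{\ab}(g)|$ being absent since $\alpha+\beta\neq-1$),
\[
\big\|\mathfrak g_{\ab}^{\gamma,k}(f)\big\|_p=\big\|\mathfrak g_{\ab}^{k-\gamma}(g)\big\|_p\lesssim\|g\|_p=\|f\|_{\mathcal{L}_{\ab}^{p,\gamma}}.
\]
I do not expect a genuine obstacle here; the only point demanding care is the bookkeeping that converts the ordinary $k$-th $t$-derivative of $H_t^{\ab}f$ into the fractional derivative $\partial_t^{k-\gamma}$ of $H_t^{\ab}(L_{\ab}^{\gamma/2}f)$, and this is exactly what \eqref{dgHt} does for us by reducing both operations to the spectral multiplier $(\lambda_n^{\ab})^{k/2}$.
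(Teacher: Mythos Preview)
Your proof is correct and follows essentially the same approach as the paper: establish the pointwise identity $\mathfrak g_{\ab}^{\gamma,k}(f)=\mathfrak g_{\ab}^{k-\gamma}(L_{\ab}^{\gamma/2}f)$ via the spectral expansion, then read off boundedness from Theorem~\ref{lpequivf}. The only difference is cosmetic: the paper cites \cite[Lemma~2.2~(ii)]{betsq} for the identity on $S_{\ab}$ and then extends by density, whereas you verify it directly for all $f\in\mathcal{L}_{\ab}^{p,\gamma}$ using \eqref{dgHt}, which is slightly more self-contained.
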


\begin{proof}
In view of \cite[Lemma 2.2 (ii)]{betsq}, the identity 
$\mathfrak g_{\ab}^{\gamma, k}(f)=\mathfrak g_{\ab}^{k-\gamma}(L_{\ab}^{\,\gamma/2}f)$ holds for
$f\in S_{\ab}$. Taking into account that $\mathcal{L}_{\ab}^{p,\gamma} = L_{\ab}^{-\gamma/2}(L^p(0,\pi))$
and $L_{\ab}^{-\gamma/2}$ is one-to-one, $S_{\ab}$ is dense in $\mathcal{L}_{\ab}^{p,\gamma}$ 
and $\mathfrak{g}_{\ab}^{k-\gamma}$ is bounded on $L^p(0,\pi)$
(see Theorem \ref{lpequivf}), we arrive at the desired conclusion.
\end{proof}

Lemma \ref{lemma2.2} together with Theorem \ref{lpequivf} implies the equivalence of norms asserted
in Theorem~\ref{equivfun}, which we state as the following.
\begin{prop}\label{eq}
Let $\ab>-1$, $\alpha+\beta\neq-1$, $p\in E(\ab)$ and $0<\gamma<k$ with $k\in\mathbb{N}$. 
Then 
\begin{equation*}
\|f\|_{\mathcal{L}_{\ab}^{p,\gamma}}\simeq\big\|\mathfrak g_{\ab}^{\gamma,k}(f)\big\|_p,
	\qquad f\in \mathcal{L}_{\ab}^{p,\gamma}.
\end{equation*}
\end{prop}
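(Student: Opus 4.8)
The plan is to derive Proposition \ref{eq} directly from Lemma \ref{lemma2.2} and Theorem \ref{lpequivf}. Fix $f \in \mathcal{L}_{\ab}^{p,\gamma}$ and write $g = L_{\ab}^{\gamma/2}f$, so that $g \in L^p(0,\pi)$ and, by the very definition of the norm on $\mathcal{L}_{\ab}^{p,\gamma}$ (recall $\alpha+\beta \neq -1$), one has $\|f\|_{\mathcal{L}_{\ab}^{p,\gamma}} = \|g\|_p$. By Lemma \ref{lemma2.2}, $\mathfrak g_{\ab}^{\gamma,k}(f) = \mathfrak g_{\ab}^{k-\gamma}(g)$ pointwise. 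Applying Theorem \ref{lpequivf} with the exponent $\gamma$ there replaced by $k-\gamma > 0$, and noting that the term $\chi_{\{\alpha+\beta=-1\}}|a_0^{\ab}(g)|$ vanishes under our standing assumption $\alpha+\beta \neq -1$, we get $\|g\|_p \simeq \|\mathfrak g_{\ab}^{k-\gamma}(g)\|_p = \|\mathfrak g_{\ab}^{\gamma,k}(f)\|_p$. Chaining these equalities and equivalences yields
$$
\|f\|_{\mathcal{L}_{\ab}^{p,\gamma}} = \|g\|_p \simeq \big\|\mathfrak g_{\ab}^{k-\gamma}(g)\big\|_p = \big\|\mathfrak g_{\ab}^{\gamma,k}(f)\big\|_p,
$$
which is exactly the claimed equivalence.

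The only point requiring a word of care is the legitimacy of writing $g = L_{\ab}^{\gamma/2}f$, i.e.\ that the inverse potential $L_{\ab}^{\gamma/2}$ is well defined on $\mathcal{L}_{\ab}^{p,\gamma}$ and returns an element of $L^p(0,\pi)$ with $\|g\|_p = \|f\|_{\mathcal{L}_{\ab}^{p,\gamma}}$. This is immediate from the construction of the potential space: $\mathcal{L}_{\ab}^{p,\gamma} = L_{\ab}^{-\gamma/2}(L^p(0,\pi))$, the operator $L_{\ab}^{-\gamma/2}$ is one-to-one on $L^p(0,\pi)$ (by \cite[Proposition 2.4]{L2}), and the norm was \emph{defined} precisely so that this correspondence is isometric. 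One should also record that $\mathfrak g_{\ab}^{\gamma,k}(f)$ is a well-defined pointwise quantity for $f \in \mathcal{L}_{\ab}^{p,\gamma} \subset L^p(0,\pi)$, $p > p'(\ab)$, which was already observed in the text following \eqref{dgHt}; in particular the identity from Lemma \ref{lemma2.2} is an identity of finite pointwise values, so no measure-theoretic subtlety intrudes when we take $L^p$ norms of both sides.

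There is essentially no obstacle here: Proposition \ref{eq} is a bookkeeping consequence of the two substantial inputs, Lemma \ref{lemma2.2} (which transfers $\mathfrak g_{\ab}^{\gamma,k}$ on the potential space to $\mathfrak g_{\ab}^{k-\gamma}$ on $L^p$ via the inverse potential) and Theorem \ref{lpequivf} (the $L^p$-equivalence of norms for the fractional $g$-function, whose proof is deferred to Section \ref{sec:g-functions}). If anything deserves to be flagged as the genuine difficulty, it is the one already signalled in the text, namely the proof of Theorem \ref{lpequivf} itself — establishing that $\mathfrak g_{\ab}^{\gamma}$ both dominates and is dominated by the $L^p$ norm, which rests on the Calder\'on--Zygmund theory on spaces of homogeneous type and the sharp Poisson--Jacobi kernel estimates — but under the convention of the excerpt we are entitled to assume it, and then Proposition \ref{eq} follows in the few lines above. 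Once Proposition \ref{eq} is in hand, combining it with Lemma \ref{2.6} (to pass between different orders $k$) and the boundedness half of Lemma \ref{lemma2.2} gives the full equivalence in Theorem \ref{equivfun}, including the ``if'' direction, which is why the proposition is singled out here.
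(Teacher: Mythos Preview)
Your proof is correct and matches the paper's approach exactly: the paper merely states that Proposition \ref{eq} follows from Lemma \ref{lemma2.2} together with Theorem \ref{lpequivf}, and you spell out precisely that chain. One minor caveat on your closing commentary (which is not part of the proof proper): the ``if'' direction of Theorem \ref{equivfun} does not follow from Proposition \ref{eq}, Lemma \ref{2.6}, and Lemma \ref{lemma2.2} alone---the paper additionally uses a Banach--Alaoglu weak* compactness argument to extract the preimage $F$ with $f = L_{\ab}^{-\gamma/2}F$.
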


We are now in a position to prove Theorem \ref{equivfun}. We follow the line of reasoning 
from the proof of \cite[Proposition 4.1]{betsq}.
\begin{proof}[Proof of Theorem \ref{equivfun}]
In view of Proposition \ref{eq}, what we need to prove is that $f \in \mathcal{L}_{\ab}^{p,\gamma}$ if
$f \in L^p(0,\pi)$ and $\mathfrak{g}_{\ab}^{\gamma,k}(f) \in L^p(0,\pi)$. Thus we assume that
$f,\mathfrak{g}_{\ab}^{\gamma,k}(f) \in L^p(0,\pi)$. 

Let 
$$
F_t= \sum_{n=0}^{\infty} \big(\lambda_n^{\ab}\big)^{\gamma/2} \exp\Big(-t\sqrt{\lambda_{n}^{\ab}}\Big)
	a_n^{\ab}(f) \phi_n^{\ab}, \qquad t > 0.
$$
Notice that $F_t = (-1)^m \partial_t^{\gamma}H_t^{\ab}f$, see \eqref{dgHt}.
The series defining $F_t$ converges in $L^p(0,\pi)$, as can be easily verified by means of \eqref{bphi}.
Since the potential operator $L_{\ab}^{-\gamma/2}$ is $L^p$-bounded,
we have $L_{\ab}^{-\gamma/2}F_t = H_t^{\ab}f$ and, consequently, 
$H_t^{\ab}f \in \mathcal{L}_{\ab}^{p,\gamma}$ for $t >0$.

Next, let $l\in\mathbb{N}$ be such that $l>k$ and $l > \gamma + 1/2$. 
By Proposition \ref{eq} one has
$$
\|F_t\|_p=\big\|H_{t}^{\ab}f\big\|_{\mathcal{L}_{\ab}^{p,\gamma}} \simeq 
	\big\|\mathfrak {g}_{\ab}^{\gamma,l}(H_{t}^{\ab}f)\big\|_p, \qquad f \in L^p(0,\pi), \quad t > 0.
$$
Further, exploiting the semigroup property of $\{H_{t}^{\ab}\}$ we get, for $\theta \in (0,\pi)$,
\begin{align*}
\big|\mathfrak {g}_{\ab}^{\gamma,l}(H_{t}^{\ab}f)(\theta)\big|^2 
&=\int_0^{\infty}\Big|t^{l-\gamma}\frac{\partial^l}{\partial s^l}H_{t+s}^{\ab}f(\theta)\Big|^2\,\frac{ds}{s}
\\
& \le \int_0^{\infty}\Big|(t+s)^{l-\gamma}
	\frac{\partial^l}{\partial s^l}H_{t+s}^{\ab}f(\theta)\Big|^2\,\frac{ds}{t+s}\\
&\le \int_0^{\infty}\Big|s^{l-\gamma}\frac{\partial^l}{\partial s^l}H_{s}^{\ab}f(\theta)\Big|^2\,\frac{ds}{s}
\\
& =\big|\mathfrak g_{\ab}^{\gamma,l}(f)(\theta)\big|^2.
\end{align*}
Combining the above with Lemma \ref{2.6} we obtain
$$
\|F_t\|_p\lesssim \big\|\mathfrak g_{\ab}^{\gamma,k}(f)\big\|_p, \qquad f \in L^p(0,\pi), \quad t>0.
$$

Now, by the Banach-Alaoglu theorem there exists a decreasing positive sequence $t_n \to 0$ and a function
$F\in L^p(0,\pi)$ such that $F_{t_n}\to F$ in the weak* topology of $L^p(0,\pi)$. 
Then, since $L_{\ab}^{-\gamma/2}$ is $L^p$-bounded, we also have
$$
H_{t_n}^{\ab}f=L_{\ab}^{-\gamma/2}F_{t_n}\to L_{\ab}^{-\gamma/2}F
$$
in the weak* topology of $L^p(0,\pi)$. 
On the other hand, $H_{t_n}^{\ab}f\to f$ in $L^p(0,\pi)$, which follows by the $L^p$-boundedness of the
maximal operator $f \mapsto \sup_{t>0} |H_t^{\ab}f|$ (see \cite[Proposition 2.2]{L2}) and the density of
$S_{\ab}$ in $L^p(0,\pi)$. We conclude that $f=L_{\ab}^{-\gamma/2}F$, which means that
$f\in\mathcal{L}_{\ab}^{p,\gamma}$.
\end{proof} 

We now come back to the issue of characterizing $\mathcal{L}_{\ab}^{p,\gamma}$ when $\alpha+\beta=-1$.
Actually, by means of a variant of $\mathfrak{g}_{\ab}^{\gamma,k}$, we will characterize the Jacobi potential
spaces for any $\ab > -1$, see Theorem \ref{equivfun'} below. This is the second main result of this section.

Let $\ab > -1$ and $\gamma > 0$. Consider the modified Jacobi Laplacian
$$
\widetilde{L}_{\ab}:=\big(\id + \sqrt{L_{\ab}}\big)^2
$$ 
and the related modified Bessel type potentials $\widetilde{L}_{\ab}^{-\gamma/2}$. 
Clearly, the latter operators are well defined spectrally and bounded
on $L^2(0,\pi)$. Moreover, Lemma \ref{mk} shows that they extend uniquely to bounded operators
on $L^p(0,\pi)$, $p \in E(\ab)$ (we keep the same notation for these extensions). Furthermore,
similarly as in the proof of \cite[Proposition 2.4]{L2}, one can verify that 
$\widetilde{L}_{\ab}^{-\gamma/2}$ is one-to-one on $L^p(0,\pi)$, $p \in E(\ab)$.
Thus we can define alternative potential spaces via the modified Bessel type potentials,
$$
\widetilde{\mathcal{L}}_{\ab}^{p,\gamma} := \widetilde{L}_{\ab}^{-\gamma/2}\big(L^p(0,\pi)\big),
	\qquad p \in E(\ab),
$$
normed by $\|f\|_{\widetilde{\mathcal{L}}_{\ab}^{p,\gamma}} := \|g\|_{p}$, where
$f = \widetilde{L}_{\ab}^{-\gamma/2}g$. These are Banach spaces, and the crucial fact is that they are
isomorphic to $\mathcal{L}_{\ab}^{p,\gamma}$. More precisely, $\mathcal{L}_{\ab}^{p,\gamma}$ and
$\widetilde{\mathcal{L}}_{\ab}^{p,\gamma}$ coincide as sets of functions and the two norms are equivalent.
To see this, it is enough to observe that the multiplier operators
$$
\frac{(\id + L_{\ab})^{\gamma/2}}{(\id + \sqrt{L_{\ab}})^{\gamma}}, \qquad
\frac{(\id + \sqrt{L_{\ab}})^{\gamma}}{(\id + L_{\ab})^{\gamma/2}},
$$
being mutual inverses defined initially on $L^2(0,\pi)$, both extend to bounded operators on
$L^p(0,\pi)$, $p \in E(\ab)$. The latter follows readily by means of Lemma \ref{mk}.

The Poisson semigroup corresponding to $\widetilde{L}_{\ab}$ is generated by $-\id-\sqrt{L_{\ab}}$,
hence it has the form $\{e^{-t}H_t^{\ab}\}$. Consequently, the relevant fractional square functions
are given by
\begin{align*}
\widetilde{\mathfrak g}_{\ab}^{\,\gamma}(f)(\theta) & 
= \bigg(\int_0^{\infty}\big|t^{\gamma}\partial_t^{\gamma}
	 \big[e^{-t}H_t^{\ab}f(\theta)\big]\big|^2\frac{dt}{t}\bigg)^{1/2}, \qquad \gamma > 0, \\
\widetilde{\mathfrak g}_{\ab}^{\,\gamma,k}(f)(\theta) & =\bigg(\int_0^{\infty}\Big|t^{k-\gamma}
	\frac{\partial^k}{\partial t^k} \big[e^{-t}H_t^{\ab}f(\theta)\big]\Big|^2\frac{dt}{t}\bigg)^{1/2},
		\qquad 0<\gamma<k, \quad k \in \mathbb{N}.
\end{align*}
A reasoning parallel to that in Section \ref{sec:g-functions} shows that, given $p \in E(\ab)$,
$$
\big\|\widetilde{\mathfrak g}_{\ab}^{\,\gamma}(f)\big\|_p \simeq \|f\|_{p}, \qquad f \in L^p(0,\pi).
$$

All the above facts and a direct adaptation of the ingredients and arguments proving Theorem~\ref{equivfun}
lead to the following alternative characterization of $\mathcal{L}_{\ab}^{p,\gamma}$, 
valid for all $\ab > -1$.

\begin{thm}\label{equivfun'}
Let $\ab > -1$ and $p\in E(\ab)$. Fix $0 < \gamma < k$ with $k \in \mathbb{N}$.
Then $f\in\mathcal{L}_{\ab}^{p,\gamma}$ if and only if $f\in L^p(0,\pi)$ and
$\widetilde{\mathfrak{g}}_{\ab}^{\,\gamma,k}(f)\in L^p(0,\pi)$. Moreover,
$$
\|f\|_{\mathcal{L}_{\ab}^{p,\gamma}} \simeq
	\big\|\widetilde{\mathfrak{g}}_{\ab}^{\,\gamma,k}(f)\big\|_{p},
		\qquad f\in \mathcal{L}_{\ab}^{p,\gamma}.
$$
\end{thm}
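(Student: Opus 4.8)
The plan is to mimic the proof of Theorem~\ref{equivfun} line by line, replacing $H_t^{\ab}$ by the modified Poisson semigroup $\{e^{-t}H_t^{\ab}\}$, the potential $L_{\ab}^{-\gamma/2}$ by $\widetilde{L}_{\ab}^{-\gamma/2}$, and the space $\mathcal{L}_{\ab}^{p,\gamma}$ by $\widetilde{\mathcal{L}}_{\ab}^{p,\gamma}$; then at the very end invoke the already-established norm equivalence $\|\cdot\|_{\mathcal{L}_{\ab}^{p,\gamma}}\simeq\|\cdot\|_{\widetilde{\mathcal{L}}_{\ab}^{p,\gamma}}$ (and coincidence as sets) to transfer the conclusion back to $\mathcal{L}_{\ab}^{p,\gamma}$. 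The necessary building blocks are exactly those used before, adapted: first, the modified $L^p$ square function estimate $\|\widetilde{\mathfrak g}_{\ab}^{\,\gamma}(f)\|_p\simeq\|f\|_p$ stated just above (proved by the reasoning of Section~\ref{sec:g-functions}); second, the analogues of Lemmas~\ref{2.6}, \ref{lemma2.2} and Proposition~\ref{eq} with $\widetilde{L}_{\ab}$, $\widetilde{\mathfrak g}_{\ab}^{\,\gamma,k}$ in place of $L_{\ab}$, $\mathfrak g_{\ab}^{\gamma,k}$; and third, the $L^p$-boundedness of the maximal operator for $\{e^{-t}H_t^{\ab}\}$, which follows immediately from that of $\{H_t^{\ab}\}$ (\cite[Proposition 2.2]{L2}) since $e^{-t}\le 1$, so that $e^{-t_n}H_{t_n}^{\ab}f\to f$ in $L^p$ along any $t_n\to 0$.

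Concretely, I would first record the analogue of Proposition~\ref{eq}: for $f\in\widetilde{\mathcal{L}}_{\ab}^{p,\gamma}$ one has $\|f\|_{\widetilde{\mathcal{L}}_{\ab}^{p,\gamma}}\simeq\|\widetilde{\mathfrak g}_{\ab}^{\,\gamma,k}(f)\|_p$; this comes from the modified version of Lemma~\ref{lemma2.2}, namely the spectral identity $\widetilde{\mathfrak g}_{\ab}^{\,\gamma,k}(f)=\widetilde{\mathfrak g}_{\ab}^{\,k-\gamma}(\widetilde{L}_{\ab}^{\,\gamma/2}f)$ valid on $S_{\ab}$ (verified exactly as \cite[Lemma 2.2(ii)]{betsq} since the semigroup $e^{-t}H_t^{\ab}$ is generated by the nonnegative operator $\sqrt{\widetilde{L}_{\ab}}=\id+\sqrt{L_{\ab}}$), followed by density of $S_{\ab}$ in $\widetilde{\mathcal{L}}_{\ab}^{p,\gamma}$, injectivity of $\widetilde{L}_{\ab}^{-\gamma/2}$, and the $L^p$-boundedness of $\widetilde{\mathfrak g}_{\ab}^{\,k-\gamma}$. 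That gives the quantitative half of the theorem once transferred via the norm equivalence. For the converse (membership) half, assume $f\in L^p(0,\pi)$ with $\widetilde{\mathfrak g}_{\ab}^{\,\gamma,k}(f)\in L^p$, set
$$
F_t=(-1)^m\,e^{t}\,\partial_t^{\gamma}\big[e^{-t}H_t^{\ab}f\big]
   =\sum_{n\ge 0}\big(1+\sqrt{\lambda_n^{\ab}}\big)^{\gamma}e^{-t\sqrt{\lambda_n^{\ab}}}a_n^{\ab}(f)\phi_n^{\ab},
$$
note that the series converges in $L^p$ by \eqref{bphi}, and check $\widetilde{L}_{\ab}^{-\gamma/2}F_t=H_t^{\ab}f$, so $H_t^{\ab}f\in\widetilde{\mathcal{L}}_{\ab}^{p,\gamma}$; then, picking $l\in\mathbb N$ with $l>k$ and $l>\gamma+1/2$, use the modified Proposition~\ref{eq} together with the semigroup property and the same pointwise domination $|\widetilde{\mathfrak g}_{\ab}^{\,\gamma,l}(e^{-t}H_t^{\ab}f)(\theta)|\le|\widetilde{\mathfrak g}_{\ab}^{\,\gamma,l}(f)(\theta)|$ and the modified Lemma~\ref{2.6} to get a uniform bound $\|F_t\|_p\lesssim\|\widetilde{\mathfrak g}_{\ab}^{\,\gamma,k}(f)\|_p$. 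A Banach--Alaoglu extraction along $t_n\to 0$ yields $F\in L^p$ with $F_{t_n}\to F$ weak-$*$, hence $H_{t_n}^{\ab}f=\widetilde{L}_{\ab}^{-\gamma/2}F_{t_n}\to\widetilde{L}_{\ab}^{-\gamma/2}F$ weak-$*$, while $e^{-t_n}H_{t_n}^{\ab}f\to f$ strongly and $e^{-t_n}\to 1$ give $H_{t_n}^{\ab}f\to f$, so $f=\widetilde{L}_{\ab}^{-\gamma/2}F\in\widetilde{\mathcal{L}}_{\ab}^{p,\gamma}=\mathcal{L}_{\ab}^{p,\gamma}$.

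The main obstacle, as in Theorem~\ref{equivfun}, is the $L^p$-norm equivalence for the modified vertical fractional $g$-function $\widetilde{\mathfrak g}_{\ab}^{\,\gamma}$; but this is precisely what is asserted above to follow from ``a reasoning parallel to that in Section~\ref{sec:g-functions},'' the point being that the Calder\'on--Zygmund kernel estimates underlying Theorem~\ref{lpequivf} are only improved by the extra decaying factor $e^{-t}$ (and the bottom-eigenvalue obstruction that forced the $|a_0^{\ab}(f)|$ correction term in the singular case now disappears because $\sqrt{\widetilde{L}_{\ab}}\ge\id$ is bounded below by $1$ for every $\ab>-1$). Granting that, everything else is a routine verification that the cited arguments from \cite{betsq} and from the proof of Theorem~\ref{equivfun} transcribe verbatim, plus the bookkeeping of passing between the $\widetilde{\mathcal{L}}$ and $\mathcal{L}$ norms via the multiplier operators $(\id+L_{\ab})^{\gamma/2}(\id+\sqrt{L_{\ab}})^{-\gamma}$ and its inverse, whose $L^p$-boundedness was already obtained from Lemma~\ref{mk}.
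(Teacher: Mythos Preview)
Your proposal is correct and follows exactly the route the paper intends: the paper's own proof consists of the single sentence ``This is a repetition of the arguments already presented,'' and you have faithfully supplied those repeated arguments, using the modified semigroup $\{e^{-t}H_t^{\ab}\}$, the potentials $\widetilde{L}_{\ab}^{-\gamma/2}$, the identification $\widetilde{\mathcal{L}}_{\ab}^{p,\gamma}=\mathcal{L}_{\ab}^{p,\gamma}$, and the $L^p$-equivalence for $\widetilde{\mathfrak g}_{\ab}^{\,\gamma}$ exactly as the paper sets up in the paragraphs preceding the theorem.

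One small slip: with your choice $F_t=(-1)^m e^{t}\partial_t^{\gamma}[e^{-t}H_t^{\ab}f]$ you get $\widetilde{L}_{\ab}^{-\gamma/2}F_t=H_t^{\ab}f$, so the modified Proposition~\ref{eq} yields $\|F_t\|_p\simeq\|\widetilde{\mathfrak g}_{\ab}^{\,\gamma,l}(H_t^{\ab}f)\|_p=e^{t}\|\widetilde{\mathfrak g}_{\ab}^{\,\gamma,l}(e^{-t}H_t^{\ab}f)\|_p\le e^{t}\|\widetilde{\mathfrak g}_{\ab}^{\,\gamma,l}(f)\|_p$, which carries an extra $e^{t}$ and is therefore not uniform in all $t>0$ as you claim. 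This is harmless for the Banach--Alaoglu step (you only need boundedness along $t_n\to 0$), but the cleaner fix is simply to drop the $e^{t}$ and set $F_t=(-1)^m\partial_t^{\gamma}[e^{-t}H_t^{\ab}f]$, so that $\widetilde{L}_{\ab}^{-\gamma/2}F_t=e^{-t}H_t^{\ab}f$ and the domination gives a genuinely uniform bound; the limit identification then reads $e^{-t_n}H_{t_n}^{\ab}f\to f$ directly.
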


\begin{proof}
This is a repetition of the arguments already presented.
We leave details to interested readers.
\end{proof}

\section{Sample applications of the potential spaces} \label{sec:convergence}

The first application we present is motivated by the results in \cite[Section 7]{BT1} and
\cite[Section 6]{BT2}, see also references therein.
Given some initial data $f \in L^2(0,\pi)$, consider the following Cauchy problem based on the Jacobi
Laplacian:
$$
\begin{cases}
\big(i\partial_t + L_{\ab}\big) u(\theta,t)=0 \\
u(\theta,0)=f(\theta)
\end{cases},
\qquad \theta\in(0,\pi),\quad t\in\mathbb{R}.
$$
It is straightforward to check that $\exp(itL_{\ab})f$ is a solution to this problem
(here $\exp(itL_{\ab})$ is understood spectrally). Then a natural and important question is the following:
what regularity conditions should be imposed on $f$ to guarantee pointwise almost everywhere convergence
of the solution to the initial condition? It turns out that a sufficient condition for this convergence
can be stated in terms of the Jacobi potential spaces.
 
\begin{prop}
Let $\ab > -1$ and $s > 1/2$. Then for each $f\in \mathcal{L}_{\ab}^{2,s}$
$$
\lim_{t\rightarrow0}\,\exp(itL_{\ab})f(\theta)=f(\theta) \qquad \textrm{a.a.}\; \theta \in (0,\pi).
$$
\end{prop}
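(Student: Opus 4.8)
The plan is to reduce the claim to one elementary estimate on the Fourier--Jacobi coefficients of $f$; this suffices precisely in the range $s>1/2$, and no maximal operator or oscillatory integral is needed.

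First I would record the coefficient description of the potential space: by the definition of $\mathcal{L}_{\ab}^{2,s}$, Parseval's identity, and the fact that $1+\lambda_n^{\ab}\simeq(n+1)^2$, a function $f\in L^2(0,\pi)$ belongs to $\mathcal{L}_{\ab}^{2,s}$ if and only if
$$
\sum_{n=0}^{\infty}\big(1+\lambda_n^{\ab}\big)^{s}\big|a_n^{\ab}(f)\big|^{2}<\infty ,
$$
with the square root of the left-hand side comparable to $\|f\|_{\mathcal{L}_{\ab}^{2,s}}$; the cases $\alpha+\beta=-1$ and $\alpha+\beta\neq-1$ differ here only in whether one writes $1+\lambda_n^{\ab}$ or $\lambda_n^{\ab}$ (for the latter $\lambda_0^{\ab}>0$), so I treat them together. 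By the spectral theorem, $\exp(itL_{\ab})f=\sum_{n}e^{it\lambda_n^{\ab}}a_n^{\ab}(f)\,\phi_n^{\ab}$ with convergence in $L^2(0,\pi)$.

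Next I would fix $\sigma$ with $0<\sigma<\frac{2s-1}{4}$, which is possible since $s>1/2$, and use the elementary bound $|e^{i\tau}-1|\le 2|\tau|^{\sigma}$, valid for all real $\tau$ and all $\sigma\in[0,1]$. With $\tau=t\lambda_n^{\ab}$ this gives
$$
\big|\exp(itL_{\ab})f(\theta)-f(\theta)\big|\le\sum_{n}\big|e^{it\lambda_n^{\ab}}-1\big|\,\big|a_n^{\ab}(f)\big|\,\big|\phi_n^{\ab}(\theta)\big|\le 2\,|t|^{\sigma}N(\theta),
$$
where $N(\theta):=\sum_{n}(1+\lambda_n^{\ab})^{\sigma}|a_n^{\ab}(f)|\,|\phi_n^{\ab}(\theta)|$. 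The step I expect to be the heart of the matter is checking that $N\in L^2(0,\pi)$. By Cauchy--Schwarz, splitting the weight as $(1+\lambda_n^{\ab})^{s/2}\cdot(1+\lambda_n^{\ab})^{\sigma-s/2}$, one bounds $N(\theta)$ by $\|f\|_{\mathcal{L}_{\ab}^{2,s}}$ times $\big(\sum_{n}(1+\lambda_n^{\ab})^{2\sigma-s}|\phi_n^{\ab}(\theta)|^{2}\big)^{1/2}$; integrating over $(0,\pi)$ and using the orthonormality $\|\phi_n^{\ab}\|_{L^2(0,\pi)}=1$ reduces everything to the convergence of $\sum_{n}(1+\lambda_n^{\ab})^{2\sigma-s}\simeq\sum_{n}(n+1)^{2(2\sigma-s)}$, which holds because $2\sigma<s-\frac12$. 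Hence $N(\theta)<\infty$ for a.e.\ $\theta\in(0,\pi)$.

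Finally I would observe that the same Cauchy--Schwarz estimate with $\sigma=0$ shows $\sum_{n}|a_n^{\ab}(f)|\,|\phi_n^{\ab}(\theta)|<\infty$ for a.e.\ $\theta$ (here $\sum_{n}(1+\lambda_n^{\ab})^{-s}\simeq\sum_{n}(n+1)^{-2s}<\infty$ since $s>1/2$), so that at a.e.\ $\theta$ the Fourier--Jacobi expansions of $f$ and of $\exp(itL_{\ab})f$ converge absolutely and agree pointwise with the respective $L^2$ sums; thus the displayed inequality is legitimate at a.e.\ $\theta$. Letting $t\to0$ --- the sign of $t$ being irrelevant since $|e^{it\lambda_n^{\ab}}-1|$ depends only on $|t|$ --- and using $N(\theta)<\infty$ a.e.\ yields $\exp(itL_{\ab})f(\theta)\to f(\theta)$ for a.e.\ $\theta$, as desired. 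I do not expect a genuine obstacle here: the only point requiring care is the pointwise interpretation of the spectral series, dealt with by the absolute convergence above. It is worth stressing that one should resist trying to prove an $L^2$ maximal or local smoothing estimate for $\exp(itL_{\ab})$ instead; such estimates would be relevant only for pushing $s$ below $1/2$, and on this compact, discrete-spectrum setting no genuine local smoothing is available on a fixed time window anyway.
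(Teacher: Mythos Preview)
Your proof is correct and takes a genuinely different route from the paper's.

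The paper argues via a maximal operator: it sets $T_*f(\theta)=\sup_{t}|\exp(itL_{\ab})f(\theta)|$, uses the uniform pointwise bound $|\phi_n^{\ab}(\theta)|\le C_N$ on compact intervals $I_N=[\tfrac1N,\pi-\tfrac1N]$ together with Schwarz' inequality and the convergence of $\sum_n(\lambda_n^{\ab})^{-s}$ to obtain $\int_{I_N}T_*f\le C_N'\|f\|_{\mathcal{L}_{\ab}^{2,s}}$, and then runs a standard density/three-set argument with $S_{\ab}$. Your approach bypasses all of this: the elementary bound $|e^{i\tau}-1|\le 2|\tau|^{\sigma}$ converts the problem into showing that the single function $N(\theta)=\sum_n(1+\lambda_n^{\ab})^{\sigma}|a_n^{\ab}(f)|\,|\phi_n^{\ab}(\theta)|$ is finite a.e., which you get from $N\in L^2(0,\pi)$ using only Cauchy--Schwarz and the orthonormality $\|\phi_n^{\ab}\|_2=1$. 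In particular you never use pointwise bounds on $\phi_n^{\ab}$, you avoid any localization to $I_N$, and you obtain a quantitative rate $O(|t|^{\sigma})$. The paper's route has the advantage of producing a maximal inequality of independent interest; yours is shorter and more self-contained. One small cosmetic point: since the inequality $|e^{i\tau}-1|\le 2|\tau|^{\sigma}$ is stated for $\sigma\in[0,1]$, when $s>5/2$ you should explicitly take $\sigma<\min\{1,(2s-1)/4\}$ rather than just $\sigma<(2s-1)/4$; this changes nothing in the argument. Your closing remark that one should avoid maximal estimates here is amusing in context, since that is exactly the route the paper takes---but it is true that for $s>1/2$ the maximal estimate is not needed.
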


\begin{proof}
In the proof we assume that $\alpha+\beta \neq -1$; the opposite case requires obvious modifications,
which are left to the reader. Let $f \in \mathcal{L}_{\ab}^{2,s}\subset L^2(0,\pi)$ and observe that
$\exp(itL_{\ab})f$ is well defined in the $L^2$ sense. It is straightforward to check that
$$
\lim_{t \to 0} \exp(itL_{\ab})f(\theta) = f(\theta), \qquad \theta \in (0,\pi), \quad f \in S_{\ab}.
$$
Recall that $S_{\ab}$ is a dense subspace of $\mathcal{L}_{\ab}^{2,s}$.

We will show that the set
$$
A=\big\{\theta\in(0,\pi):\limsup_{t\to 0}|\exp(itL_{\ab})f(\theta)-f(\theta)|>0\big\}
$$
has Lebesgue measure zero. Denote
$I_N=[\frac{1}{N},\pi-\frac{1}{N}]$ and 
$$
A_{N,k}=\Big\{\theta\in I_N:\limsup_{t\to 0}|\exp(itL_{\ab})f(\theta)-f(\theta)|>\frac{1}{k}\Big\}.
$$
Since the sum of $A_{N,k}$ over $N,k \ge 1$ gives $A$, it is enough to prove that $|A_{N,k}|=0$
for each $N$ and $k$ fixed.

To proceed, we consider the maximal operator
$$
T_*f(\theta)=\sup_{t\in\mathbb{R}}|\exp(itL_{\ab})f(\theta)|.
$$ 
We have
$$
\int_{I_N} T_{*}f(\theta)\, d\theta \le \sum_{n=0}^{\infty} \big| a_n^{\ab}(f)\big| \int_{I_N}
	|\phi_n^{\ab}(\theta)|\, d\theta.
$$
The integrals here can be bounded by means of the estimate, see \cite[Theorem 8.21.8]{Sz},
$$
|\phi_n^{\ab}(\theta)| \le C_N, \qquad \theta \in I_N, \quad n \ge 0
$$
(the constant $C_N$ depends on $N$ and possibly also on $\alpha$ and $\beta$). 
Then, using Schwarz' inequality,
we get
\begin{equation} \label{max}
\int_{I_N} T_{*}f(\theta)\, d\theta \le C_N \bigg( \sum_{n=0}^{\infty} \big| \big(\lambda_n^{\ab}\big)^{s/2}
	a_n^{\ab}(f)\big|^2 \bigg)^{1/2} \bigg( \sum_{n=0}^{\infty} \big(\lambda_n^{\ab}\big)^{-s}\bigg)^{1/2}
		= C'_N \|f\|_{\mathcal{L}_{\ab}^{2,s}},
\end{equation}
where $C'_N$ depends also on $s$.

Now we are ready to show that $|A_{N,k}|=0$. Take $0<\varepsilon<1$ and choose $f_0\in S_{\ab}$
such that $\|f-f_0\|_{\mathcal{L}_{\ab}^{2,s}}< \varepsilon$. We have
$A_{N,k} \subset A_{N,k}^1 \cup A_{N,k}^2 \cup A_{N,k}^3$, where
\begin{align*}
A_{N,k}^1 & = \Big\{ \theta \in I_N : |f(\theta)-f_0(\theta)| > \frac{1}{3k} \Big\}, \\
A_{N,k}^2 & = \Big\{ \theta \in I_N : \limsup_{t \to 0} \big|\exp(itL_{\ab})f_0(\theta)-f_0(\theta)\big| > 
	\frac{1}{3k} \Big\},\\
A_{N,k}^3 & = \Big\{ \theta \in I_N : \limsup_{t \to 0} \big|\exp(itL_{\ab})f(\theta)
	-\exp(itL_{\ab})f_0(\theta)\big| > \frac{1}{3k} \Big\}.
\end{align*}
Notice that $A_{N,k}^2 = \emptyset$. For $A_{N,k}^1$ we write
\begin{align*}
|A_{N,k}^1| & \le (3k)^2 \int_{I_N} |f(\theta)-f_0(\theta)|^2\, d\theta
	\le (3k)^2 \|f-f_0\|_2^2 \le (3k)^2 \big|\lambda_0^{\ab}\big|^{-s} \|f-f_0\|^2_{\mathcal{L}_{\ab}^{2,s}} \\
&		< (3k)^2 \big|\lambda_0^{\ab}\big|^{-s}\, \varepsilon,
\end{align*}
where we used the equality $\big\|L_{\ab}^{-s/2}\big\|_{L^2\to L^2} = \big|\lambda_0^{\ab}\big|^{-s/2}$.
Finally, to deal with $A_{N,k}^3$ we use \eqref{max} and obtain
$$
|A_{N,k}^3| \le 3k \int_{I_N} T_{*}(f-f_0)(\theta)\, d\theta \le
	3k C'_N \|f-f_0\|_{\mathcal{L}_{\ab}^{2,s}} < 3kC'_N\, \varepsilon.
$$
Since we can choose $\varepsilon$ arbitrarily small, it follows that $|A_{N,k}|=0$
\end{proof}

Another result 
involving the Jacobi potential spaces is the following mixed norm smoothing estimate
motivated by the results of \cite[Section 3]{BR}.
\begin{prop}\label{strichartz}
Let $\ab > -1$ and $p \in E(\ab)$. Assume that $s>0$ is such that $s \ge 1/2 + \max\{\alpha,\beta,-1/2\}$
and $\alpha+\beta$ is integer. Then
\begin{equation*}
\big\|\exp(itL_{\ab})f\big\|_{L_{\theta}^p((0,\pi),\,L_t^2(0,2\pi))}\lesssim \|f\|_{\mathcal L_{\ab}^{2,s}},
\qquad f \in \mathcal{L}_{\ab}^{2,s}.
\end{equation*}
\end{prop}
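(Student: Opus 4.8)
The plan is to expand $\exp(itL_{\ab})f$ in the Jacobi basis and reduce the mixed norm estimate to a one-dimensional computation in $t$ followed by an application of the polynomial bound \eqref{bphi}. Writing $f = \sum_n a_n^{\ab}(f)\,\phi_n^{\ab}$ with $f \in \mathcal{L}_{\ab}^{2,s} \subset L^2(0,\pi)$, one has, at least in the $L^2$ sense,
$$
\exp(itL_{\ab})f(\theta) = \sum_{n=0}^{\infty} e^{it\lambda_n^{\ab}}\, a_n^{\ab}(f)\, \phi_n^{\ab}(\theta).
$$
The eigenvalues $\lambda_n^{\ab} = (n+A_{\ab})^2$ satisfy $\lambda_n^{\ab} - \lambda_m^{\ab} \in \Z$ for $n \neq m$ precisely when $\alpha+\beta$ (hence $2A_{\ab}$) is an integer, so the exponentials $e^{it\lambda_n^{\ab}}$, $n \ge 0$, form (after extracting a common phase $e^{itA_{\ab}^2}$ when $\alpha+\beta$ is odd, which introduces a half-integer shift handled by passing to the interval $(0,4\pi)$ or by a change of variables) an orthogonal subset of $L^2(0,2\pi)$ with norms $\|e^{it\lambda_n^{\ab}}\|_{L^2(0,2\pi)} = \sqrt{2\pi}$. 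Therefore Parseval's identity in the $t$ variable gives, for each fixed $\theta$,
$$
\big\|\exp(itL_{\ab})f(\theta)\big\|_{L_t^2(0,2\pi)}^2 = 2\pi \sum_{n=0}^{\infty} \big| a_n^{\ab}(f)\, \phi_n^{\ab}(\theta)\big|^2.
$$

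Next I would insert the pointwise bound \eqref{bphi}, namely $|\phi_n^{\ab}(\theta)| \le C\,\Psi^{\ab}(\theta)\,(n+1)^{1/2+\max\{\alpha,\beta,-1/2\}}$. This yields
$$
\big\|\exp(itL_{\ab})f(\theta)\big\|_{L_t^2(0,2\pi)} \lesssim \Psi^{\ab}(\theta) \bigg( \sum_{n=0}^{\infty} (n+1)^{1+2\max\{\alpha,\beta,-1/2\}}\, \big| a_n^{\ab}(f)\big|^2 \bigg)^{1/2}.
$$
Under the hypothesis $s \ge 1/2 + \max\{\alpha,\beta,-1/2\}$ we have $(n+1)^{1+2\max\{\alpha,\beta,-1/2\}} \lesssim (n+A_{\ab})^{2s} = (\lambda_n^{\ab})^{s}$, so the sum on the right is controlled by $\sum_n (\lambda_n^{\ab})^{s} |a_n^{\ab}(f)|^2 \simeq \|f\|_{\mathcal{L}_{\ab}^{2,s}}^2$ (by definition of the potential-space norm in the $L^2$ case and Parseval; when $\alpha+\beta = -1$ one separates the $n=0$ term and uses the Bessel potential, which only helps). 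It then remains to take the $L_{\theta}^p(0,\pi)$ norm of both sides: the estimate reduces to $\|\Psi^{\ab}\|_{L^p(0,\pi)} < \infty$, which holds exactly because $p \in E(\ab)$ — indeed $\Psi^{\ab}(\theta)^p$ is integrable near the endpoints precisely when $p(\alpha+1/2) > -1$ and $p(\beta+1/2)>-1$, i.e.\ $p < p(\ab)$, and the lower constraint $p > p'(\ab)$ is automatic for $p \in E(\ab)$.

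The main obstacle is making the orthogonality step rigorous: one must justify interchanging the $L^2(0,\pi)$-convergent Jacobi series with the $L_t^2$ norm and check that the frequencies $\lambda_n^{\ab}$ are genuinely distinct integers (they are, since $n \mapsto (n+A_{\ab})^2$ is strictly increasing for $n \ge 0$ once $A_{\ab} > 0$, and $A_{\ab} = 0$ only in the excluded-by-convention case $\alpha+\beta=-1$ where one shifts), as well as dealing cleanly with the half-integer period shift when $2A_{\ab}$ is odd. All of this is handled by first proving the estimate for $f \in S_{\ab}$ — where everything is a finite sum and Fubini is trivial — and then extending by density of $S_{\ab}$ in $\mathcal{L}_{\ab}^{2,s}$ (Corollary 2.6 of \cite{L2}), using that the right-hand side of the asserted inequality is the $\mathcal{L}_{\ab}^{2,s}$ norm and the left-hand side is a norm continuous under $L^2$-convergence after the pointwise-in-$\theta$ reduction; a routine Fatou argument upgrades pointwise a.e.\ limits to the mixed-norm bound.
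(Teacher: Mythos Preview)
Your proof is correct and follows essentially the same route as the paper: Parseval/orthogonality in $t$ to compute $\|\exp(itL_{\ab})f(\theta)\|_{L^2_t(0,2\pi)}$, the pointwise bound \eqref{bphi}, and then the identification $\sum_n (\lambda_n^{\ab})^{s}|a_n^{\ab}(f)|^2 = \|f\|_{\mathcal{L}_{\ab}^{2,s}}^2$, with a density reduction to $S_{\ab}$.

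Two small remarks. First, your parenthetical about extracting a common phase and passing to $(0,4\pi)$ is unnecessary: orthogonality of $\{e^{it\lambda_n^{\ab}}\}$ in $L^2(0,2\pi)$ only requires that the \emph{differences} $\lambda_n^{\ab}-\lambda_m^{\ab}=(n-m)(n+m+\alpha+\beta+1)$ be nonzero integers, which holds directly once $\alpha+\beta\in\Z$; whether the $\lambda_n^{\ab}$ themselves are integers is irrelevant. Second, your ordering --- applying the pointwise bound \eqref{bphi} \emph{before} taking the $L^p_\theta$ norm --- is a slight variant of the paper's argument, which instead applies Minkowski's inequality to pass from $L^p_\theta\ell^2_n$ to $\ell^2_n L^p_\theta$ and then bounds $\|\phi_n^{\ab}\|_p$. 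Your order has the advantage that it avoids the Minkowski step altogether (that step, as written, is only straightforward for $p\ge 2$), yielding the estimate for all $p\in E(\ab)$ in one stroke via $\|\Psi^{\ab}\|_p<\infty$.
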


\begin{proof}
Throughout the proof we assume that $\alpha+\beta \neq -1$, since the opposite case requires only
minor modifications.
By a density argument it suffices to prove the asserted bound for $f\in S_{\ab}$. 
For such $f$ we have
\begin{align*}
\big\|\exp(itL_{\ab})f\big\|_{L_t^2(0,2\pi)}^2&=\int_0^{2\pi}\Big(\sum\limits_{n=0}^{\infty}
e^{it\lambda_n^{\ab}}\,a_n^{\ab}(f)\,\ph\Big)
\Big(\sum\limits_{n=0}^{\infty}e^{-it\lambda_n^{\ab}}\,\overline{a_n^{\ab}(f)}\,\ph\Big)\,dt\\
&=2\pi\,\sum\limits_{n=0}^{\infty}\big|a_n^{\ab}(f)\big|^2\,\big(\ph\big)^2,
\end{align*}
since $\lambda_n^{\ab}-\lambda_m^{\ab} = (n-m)(n+m+\alpha+\beta+1)$ is integer. Then
applying Minkowski's inequality we get
\begin{equation*}
\|\exp(itL_{\ab})f\|_{L_{\theta}^p((0,\pi),\,L_t^2(0,2\pi))}\le
 \sqrt{2\pi} \bigg(\sum\limits_{n=0}^{\infty}\big|a_n^{\ab}(f)\big|^2\,\big\|\ph\big\|_{p}^2\bigg)^{1/2}.
\end{equation*}
By means of \eqref{bphi} we can estimate the $L^p$ norms here,
$$
\big\|\ph\big\|_p \lesssim \big\|\Psi^{\ab}\big\|_p \, 
	(n+1)^{1/2+\max\{\alpha,\beta,-1/2\}} \lesssim (n+1)^s,\qquad n \ge 0.
$$
Applying now Parseval's identity we arrive at
\begin{align*}
\|\exp(itL_{\ab})f\|_{L_{\theta}^p((0,\pi),\,L_t^2(0,2\pi))}& \lesssim \bigg( \sum_{n=0}^{\infty} (n+1)^{2s}
	\big|a_n^{\ab}(f)\big|^2 \bigg)^{1/2} \\
& \lesssim \bigg( \sum_{n=0}^{\infty} \big(\lambda_n^{\ab}\big)^{s} \big|a_n^{\ab}(f)\big|^2 \bigg)^{1/2}\\
& = \bigg\| \sum_{n=0}^{\infty} \big(\lambda_n^{\ab}\big)^{s/2} a_n^{\ab}(f)\, \phi_n^{\ab} \bigg\|_2 
 = \|f\|_{\mathcal{L}_{\ab}^{2,s}}.
\end{align*}
This finishes the proof.
\end{proof}

Finally, we give an extension of Proposition \ref{strichartz}.
\begin{prop} \label{prop:ext}
Let $\ab, p$ and $s$ be as in Proposition \ref{strichartz} and assume that $q>2$. 
Then
$$
\big\|\exp(itL_{\ab})f\big\|_{L_{\theta}^p((0,\pi),\,L_t^q(0,2\pi))} \lesssim
	\|f\|_{\mathcal L_{\ab}^{2,s+1-2/q}},\qquad f\in \mathcal L_{\ab}^{2,s+1-2/q}.
$$
\end{prop}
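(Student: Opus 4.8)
The plan is to reduce the estimate to Proposition~\ref{strichartz} by means of a one‑dimensional Sobolev embedding in the time variable. The heuristic for the exponent is that under $\exp(itL_{\ab})$ one time‑derivative amounts to one power of $L_{\ab}$, hence to two units on the scale of the spaces $\mathcal{L}_{\ab}^{2,\cdot}$, whereas passing from $L_t^2$ to $L_t^q$ on a one‑dimensional periodic time interval costs $1/2-1/q$ time‑derivatives; together this accounts for the $2(1/2-1/q)=1-2/q$ extra units in the statement. I would assume $\alpha+\beta\neq-1$ (the case $\alpha+\beta=-1$ needs only the usual replacement of $\lambda_n^{\ab}$ by $1+\lambda_n^{\ab}$ and $L_{\ab}$ by $\id+L_{\ab}$, in accordance with the definition of the potential spaces there) and I would treat $q=\infty$ separately: there $1-2/q=1$, and the bound follows at once from $\big\|\exp(itL_{\ab})f(\theta)\big\|_{L_t^\infty}\le\sum_n|a_n^{\ab}(f)|\,|\phi_n^{\ab}(\theta)|$, the estimate $\|\phi_n^{\ab}\|_p\lesssim(n+1)^{1/2+\max\{\alpha,\beta,-1/2\}}\le(n+1)^s$ coming from \eqref{bphi} and the hypothesis on $s$, the Cauchy--Schwarz inequality, and $\sum_n(n+1)^{-2}<\infty$. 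So assume $2<q<\infty$; by density of $S_{\ab}$ in the relevant potential space it suffices to prove the bound for $f\in S_{\ab}$ (the same reduction applies when $q=\infty$).

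Fix such an $f$ and $\theta\in(0,\pi)$. Since $\alpha+\beta$ is an integer, the numbers $m_n:=\lambda_n^{\ab}-\lambda_0^{\ab}=n(n+\alpha+\beta+1)$ are nonnegative integers, pairwise distinct because $n\mapsto n(n+\alpha+\beta+1)$ is strictly increasing on $\{n\ge0\}$ (here $\alpha+\beta+1\ge1$). Hence $t\mapsto\exp(itL_{\ab})f(\theta)$ differs only by the unimodular factor $e^{it\lambda_0^{\ab}}$ from the $2\pi$‑periodic trigonometric polynomial $v_\theta(t):=\sum_n a_n^{\ab}(f)\,\phi_n^{\ab}(\theta)\,e^{im_nt}$, whose Fourier coefficients on $\mathbb{T}:=\mathbb{R}/2\pi\mathbb{Z}$ are $\widehat{v_\theta}(m_n)=a_n^{\ab}(f)\,\phi_n^{\ab}(\theta)$ and vanish off $\{m_n\}$; in particular $\big\|\exp(itL_{\ab})f(\theta)\big\|_{L_t^q(0,2\pi)}=\|v_\theta\|_{L^q(\mathbb{T})}$. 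Putting $\sigma:=1/2-1/q\in(0,1/2)$, so that $2\sigma=1-2/q$, I would invoke the classical embedding $H^\sigma(\mathbb{T})\hookrightarrow L^q(\mathbb{T})$, whose constant depends only on $q$, together with $\|v_\theta\|_{H^\sigma(\mathbb{T})}^2=\sum_n(1+m_n)^{2\sigma}|a_n^{\ab}(f)|^2\phi_n^{\ab}(\theta)^2$ and the two‑sided comparison $1+m_n\simeq(1+n)^2\simeq\lambda_n^{\ab}$ (valid since $\lambda_0^{\ab}>0$), to obtain
$$
\big\|\exp(itL_{\ab})f(\theta)\big\|_{L_t^q(0,2\pi)}^2\lesssim\sum_n\big(\lambda_n^{\ab}\big)^{1-2/q}\big|a_n^{\ab}(f)\big|^2\phi_n^{\ab}(\theta)^2 .
$$

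To finish, I would set $g:=L_{\ab}^{(1-2/q)/2}f\in S_{\ab}$, so that $a_n^{\ab}(g)=(\lambda_n^{\ab})^{(1-2/q)/2}a_n^{\ab}(f)$. By the identity $\big\|\exp(itL_{\ab})g(\theta)\big\|_{L_t^2(0,2\pi)}^2=2\pi\sum_n|a_n^{\ab}(g)|^2\phi_n^{\ab}(\theta)^2$ established in the proof of Proposition~\ref{strichartz}, the right‑hand side above equals $\frac{1}{2\pi}\big\|\exp(itL_{\ab})g(\theta)\big\|_{L_t^2(0,2\pi)}^2$; taking $L_\theta^p$ norms and then applying Proposition~\ref{strichartz} to $g$ gives
$$
\big\|\exp(itL_{\ab})f\big\|_{L_\theta^p((0,\pi),\,L_t^q(0,2\pi))}\lesssim\big\|\exp(itL_{\ab})g\big\|_{L_\theta^p((0,\pi),\,L_t^2(0,2\pi))}\lesssim\|g\|_{\mathcal{L}_{\ab}^{2,s}} .
$$
Since $\|g\|_{\mathcal{L}_{\ab}^{2,s}}^2=\sum_n(\lambda_n^{\ab})^{s}|a_n^{\ab}(g)|^2=\sum_n(\lambda_n^{\ab})^{s+1-2/q}|a_n^{\ab}(f)|^2=\|f\|_{\mathcal{L}_{\ab}^{2,s+1-2/q}}^2$, this is exactly the asserted bound for $f\in S_{\ab}$.

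The step I expect to be the main obstacle is the reduction to the circle: one must check that the phase $e^{it\lambda_0^{\ab}}$ is immaterial for the $L_t^q(0,2\pi)$ norm, that the frequencies $m_n$ are genuine pairwise distinct integers --- which is precisely where the hypothesis that $\alpha+\beta$ be integer enters, exactly as in Proposition~\ref{strichartz} --- and that $1+m_n\simeq\lambda_n^{\ab}$. Beyond Proposition~\ref{strichartz}, the argument uses no harmonic analysis other than the elementary periodic Sobolev embedding, and the correct bookkeeping of the regularity gain (namely $1-2/q$, reflecting that $\exp(itL_{\ab})$ turns $t$‑derivatives into powers of $L_{\ab}$) is the conceptual heart of the statement.
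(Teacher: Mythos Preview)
Your proof is correct and follows essentially the same route as the paper: both reduce the estimate to Proposition~\ref{strichartz} via a critical Sobolev embedding in the time variable, and the paper's tool for this step---Wainger's inequality (Lemma~\ref{waig}) with $r=2$---is exactly the periodic embedding $H^{1/2-1/q}(\mathbb{T})\hookrightarrow L^q(\mathbb{T})$ that you invoke. Your shift by $\lambda_0^{\ab}$ to the integer frequencies $m_n$ is in fact slightly more careful than the paper's direct appeal to ``$\lambda_n^{\ab}$ are non-zero integers'' (which, strictly speaking, fails when $\alpha+\beta$ is even), and your separate treatment of $q=\infty$ goes a bit beyond what the paper's proof covers, since Wainger's lemma requires $q<\infty$.
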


The proof uses a fractional Sobolev inequality due to Wainger \cite{Wainger}.
\begin{lem}[Wainger]\label{waig}
Let $1<r<q<\infty$. Then
\begin{equation*}
\bigg\| \sum_{k\in\mathbb{Z},\,k\neq0}|k|^{-1/r+1/q}\,\widehat{F}(k)\,\e^{itk}\bigg\|_{L^q_t(0,2\pi)}
\lesssim\|F\|_{L^r(0,2\pi)}, \qquad F\in L^r(0,2\pi),
\end{equation*}
where $\widehat{F}(k)$ is the $k$th Fourier coefficient of $F$.
\end{lem}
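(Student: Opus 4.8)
The plan is to recognize the operator on the left-hand side as a periodic fractional integral and then reduce matters to the classical Hardy--Littlewood--Sobolev (HLS) inequality. Write $\sigma := 1/r - 1/q$, so that $0 < \sigma < 1$ by the hypothesis $1 < r < q < \infty$, and the multiplier is $|k|^{-\sigma}$ on $k \neq 0$. Thus the operator is convolution on the torus $(0,2\pi)$ with the kernel
\[
K_\sigma(t) = \sum_{k \in \Z,\, k \neq 0} |k|^{-\sigma}\, \e^{ikt} = 2\sum_{k=1}^{\infty} k^{-\sigma}\cos(kt),
\]
the zero frequency (the mean of $F$) being simply discarded. First I would establish a sharp pointwise bound on $K_\sigma$, and then dominate it by the one-dimensional Riesz kernel so as to invoke HLS. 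I prefer this kernel route over transference from $\R$, because de Leeuw type transference is tailored to $L^p\to L^p$ multipliers and is awkward at the off-diagonal scaling $r\neq q$ needed here.

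For the kernel bound I would use subordination: since $|k|^{-\sigma} = \frac{1}{\Gamma(\sigma)} \int_0^\infty s^{\sigma - 1} \e^{-s|k|}\, ds$, summing the geometric series in $k$ gives
\[
K_\sigma(t) = \frac{1}{\Gamma(\sigma)} \int_0^\infty s^{\sigma - 1} \Big[ \frac{1 - \e^{-2s}}{1 - 2\e^{-s}\cos t + \e^{-2s}} - 1 \Big]\, ds,
\]
the bracket being the Poisson kernel at radius $\e^{-s}$ minus its mean. Away from $t \in 2\pi\Z$ the bracket is smooth and uniformly controlled, so $K_\sigma$ is bounded there; near $t=0$ the bracket behaves like $2s/(s^2+t^2)$, and the substitution $s = |t|\,u$ concentrates the integral and extracts a factor $|t|^{\sigma - 1}$ times the convergent integral $\int_0^\infty 2u^{\sigma}/(u^2+1)\,du$ (finite exactly because $0<\sigma<1$). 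This yields $|K_\sigma(t)| \lesssim \operatorname{dist}(t, 2\pi\Z)^{\sigma - 1}$; the singularity is locally integrable, and crucially its homogeneity is precisely $\sigma - 1$, matching the Riesz potential of order $\sigma$ on $\R$.

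With the kernel estimate in hand I would split $K_\sigma$ into a part supported in $\{|t|<\pi\}$ and a bounded remainder, and on the fundamental period dominate the singular part pointwise by $|t|^{\sigma-1} + |t - 2\pi|^{\sigma - 1} + |t + 2\pi|^{\sigma-1}$. Extending $F$ by zero outside $(0,2\pi)$ and applying the HLS inequality on $\R$ for convolution against $|t|^{-(1-\sigma)}$ gives the $L^r \to L^q$ bound exactly when $1/q = 1/r - \sigma$, i.e.\ for our very choice of $\sigma$. The bounded remainder maps $L^r$ into $L^\infty \subset L^q$ on the finite interval trivially, and discarding the mean is harmless. Combining the two contributions finishes the proof.

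The main obstacle is the kernel estimate of the second paragraph, and specifically obtaining the \emph{sharp} homogeneity exponent $\sigma - 1$ rather than merely local integrability: HLS is being applied at the endpoint scaling $1/q = 1/r - \sigma$, so any slack in the exponent would break the admissible relation between $r$ and $q$. Once $|K_\sigma(t)| \lesssim \operatorname{dist}(t,2\pi\Z)^{\sigma-1}$ is secured, the passage to HLS and the handling of the bounded and mean parts are routine, and the hypotheses $1<r$, $q<\infty$, $\sigma \in (0,1)$ are precisely those under which the fractional integration theorem applies.
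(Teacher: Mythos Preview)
Your argument is sound. The kernel bound $|K_\sigma(t)|\lesssim \operatorname{dist}(t,2\pi\mathbb{Z})^{\sigma-1}$ follows cleanly from the subordination formula and the standard estimate $P_{e^{-s}}(t)\simeq s/(s^2+t^2)$ for $s\in(0,1)$, $|t|<\pi$; after that, the pointwise domination by a finite sum of translated Riesz kernels on $\mathbb{R}$ and the application of the Hardy--Littlewood--Sobolev inequality at the critical exponent $1/q=1/r-\sigma$ go through without difficulty. Your description of the splitting is slightly garbled (once you restrict the periodic kernel to $(-2\pi,2\pi)$, the three-term Riesz domination already absorbs the ``bounded remainder'', so there is no need for a separate cutoff at $|t|<\pi$), but this is cosmetic and does not affect the proof.

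As for comparison with the paper: there is nothing to compare. The paper does not prove this lemma; it is simply stated with attribution to Wainger and a citation to his 1965 memoir, and then used as a black box in the proof of Proposition~\ref{prop:ext}. Your route via the explicit kernel and HLS is a standard, self-contained way to recover the inequality in the one-dimensional periodic setting, whereas Wainger's original approach proceeds via a study of special trigonometric series and covers the multi-dimensional case. For the purposes of this paper only the case on $\mathbb{T}$ with $r=2$ is needed, and your argument more than suffices.
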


\begin{proof}[Proof of Proposition \ref{prop:ext}]
We assume that $\alpha+\beta\neq-1$, the opposite case being similar.
Taking into account that $\lambda_{n}^{\ab}$ are non-zero integers, we apply
Lemma \ref{waig} with $r=2$ to get
\begin{equation*}
\big\|\exp(itL_{\ab})f\big\|_{L_t^q(0,2\pi)}\lesssim\bigg\|\sum\limits_{n=0}^{\infty}\,\e^{it\lambda_n^{\ab}}\,\big(\lambda_n^{\ab}\big)^{1/2-1/q}\,a_n^{\ab}(f)\,\ph\bigg\|_{L_t^2(0,2\pi)}.
\end{equation*}
This estimate combined with Proposition \ref{strichartz} yields
\begin{align*}
\big\|\exp(itL_{\ab})f\big\|_{L_{\theta}^p((0,\pi),\,L_t^q(0,2\pi))}
&\lesssim\bigg\|\sum\limits_{n=0}^{\infty}\,
	\big(\lambda_n^{\ab}\big)^{1/2-1/q}\,a_n^{\ab}(f)\,\ph\bigg\|_{\mathcal L_{\ab}^{2,s}}\\
&=\bigg\|\sum\limits_{n=0}^{\infty}\,\big(\lambda_n^{\ab}\big)^{1/2-1/q+s/2}\,
	a_n^{\ab}(f)\,\ph\bigg\|_2=\|f\|_{\mathcal L_{\ab}^{2,s+1-2/q}}.
\end{align*}
The conclusion follows.
\end{proof}

\section{Proof of Theorem \ref{lpequivf}} \label{sec:g-functions}

Theorem \ref{lpequivf} is a direct consequence of $L^p$-boundedness of $\mathfrak{g}_{\ab}^{\gamma}$ and
standard arguments, see e.g.\ \cite[Section 2]{betsq}. The following result will be proved in
Sections \ref{ssec:g}-\ref{ssec:stand} below.
\begin{thm}\label{ogrgff}
Let $\ab>-1$, $p\in E(\ab)$ and $\gamma>0$. Then $\mathfrak{g}_{\ab}^{\gamma}$ is bounded on $L^p(0,\pi)$.
\end{thm}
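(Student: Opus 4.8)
The plan is to realize $\mathfrak g_{\ab}^{\gamma}$ as the Banach-space norm of a vector-valued Calder\'on--Zygmund operator, but to do so only after passing to the \emph{polynomial} Jacobi setting, where the underlying measure is doubling and the relevant Poisson kernel is a genuine standard kernel. Write $d\mu_{\ab}(\theta):=\big(\Psi^{\ab}(\theta)\big)^2\,d\theta=(\st)^{2\alpha+1}(\ct)^{2\beta+1}\,d\theta$; then $\phi_n^{\ab}=\Psi^{\ab}\P_n^{\ab}$ with $\{\P_n^{\ab}\}$ orthonormal in $L^2((0,\pi),d\mu_{\ab})$, and $((0,\pi),|\cdot|,d\mu_{\ab})$ is a space of homogeneous type. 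Let $\mathcal H_t^{\ab}$ be the Poisson semigroup on $L^2(d\mu_{\ab})$ determined by $\mathcal H_t^{\ab}\P_n^{\ab}=\exp(-t\sqrt{\lambda_n^{\ab}})\P_n^{\ab}$, and let $\mathfrak g_{\ab}^{\gamma,\flat}$ be the fractional square function defined exactly as $\mathfrak g_{\ab}^{\gamma}$ but with $H_t^{\ab}$ replaced by $\mathcal H_t^{\ab}$. Since $H_t^{\ab}f=\Psi^{\ab}\mathcal H_t^{\ab}\big((\Psi^{\ab})^{-1}f\big)$ and $\partial_t^{\gamma}$ acts in the $t$-variable only, one obtains the pointwise identity $\mathfrak g_{\ab}^{\gamma}(f)=\Psi^{\ab}\cdot\mathfrak g_{\ab}^{\gamma,\flat}\big((\Psi^{\ab})^{-1}f\big)$, the series manipulations being justified by \eqref{bphi} exactly as in \eqref{dgHt}.

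Consequently, setting $g=(\Psi^{\ab})^{-1}f$ and $w:=(\Psi^{\ab})^{p-2}$ (so that $w\,d\mu_{\ab}=(\Psi^{\ab})^{p}\,d\theta$), we have
$$
\big\|\mathfrak g_{\ab}^{\gamma}(f)\big\|_{L^p(0,\pi)}=\big\|\mathfrak g_{\ab}^{\gamma,\flat}(g)\big\|_{L^p(w\,d\mu_{\ab})},\qquad
\|f\|_{L^p(0,\pi)}=\|g\|_{L^p(w\,d\mu_{\ab})},
$$
so Theorem~\ref{ogrgff} reduces to the boundedness of $\mathfrak g_{\ab}^{\gamma,\flat}$ on $L^p(w\,d\mu_{\ab})$. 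A power-weight computation at the two endpoints then shows that $w\in A_p(d\mu_{\ab})$ if and only if $p\in E(\ab)$: near $\theta=0$ one has $d\mu_{\ab}\simeq\theta^{2\alpha+1}d\theta$ and $w\simeq\theta^{(p-2)(\alpha+1/2)}$, and the $A_p$ condition there reduces to the pair of inequalities $p(\alpha+1/2)>-1$ and $p(\alpha+3/2)>1$, i.e.\ to $p<p(\ab)$ and $p>p'(\ab)$ when $\alpha<-1/2$ (and to no restriction when $\alpha\ge-1/2$); the endpoint $\theta=\pi$ contributes the analogous conditions in $\beta$. It therefore suffices to prove that $\mathfrak g_{\ab}^{\gamma,\flat}$ is bounded on $L^p(v\,d\mu_{\ab})$ for \emph{every} $v\in A_p(d\mu_{\ab})$ and every $1<p<\infty$.

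For this I would invoke the theory of vector-valued Calder\'on--Zygmund operators on the space of homogeneous type $((0,\pi),|\cdot|,d\mu_{\ab})$, applied to the linear operator $\mathbb G^{\gamma}g(\theta):=\big(t\mapsto t^{\gamma}\partial_t^{\gamma}\mathcal H_t^{\ab}g(\theta)\big)$, taking values in $\mathbb B:=L^2((0,\infty),dt/t)$, for which $\mathfrak g_{\ab}^{\gamma,\flat}(g)=\|\mathbb G^{\gamma}g\|_{\mathbb B}$. First, $\mathbb G^{\gamma}$ is bounded from $L^2(d\mu_{\ab})$ to $L^2_{\mathbb B}(d\mu_{\ab})$: writing the spectral series for $\partial_t^{\gamma}\mathcal H_t^{\ab}g$ (the analogue of \eqref{dgHt}) and using orthonormality of $\{\P_n^{\ab}\}$, the inner integral $\int_0^{\infty}t^{2\gamma}(\lambda_n^{\ab})^{\gamma}e^{-2t\sqrt{\lambda_n^{\ab}}}\,\tfrac{dt}{t}$ equals the constant $2^{-2\gamma}\Gamma(2\gamma)$ for every $n$ with $\lambda_n^{\ab}>0$, whence $\|\mathbb G^{\gamma}g\|_{L^2_{\mathbb B}}^2\le 2^{-2\gamma}\Gamma(2\gamma)\|g\|_{L^2(d\mu_{\ab})}^2$. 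Secondly, off the diagonal $\mathbb G^{\gamma}$ is represented by the $\mathbb B$-valued kernel $\mathcal K^{\gamma}(\theta,\varphi)=\big(t\mapsto t^{\gamma}\partial_t^{\gamma}\mathcal H_t^{\ab}(\theta,\varphi)\big)$, with $\mathcal H_t^{\ab}(\theta,\varphi)$ the polynomial Poisson--Jacobi kernel; this follows from the integral representation of $\mathcal H_t^{\ab}$ together with Fubini, legitimized by polynomial growth bounds. What remains --- and this is the main obstacle --- is to establish the standard estimates
$$
\big\|\mathcal K^{\gamma}(\theta,\varphi)\big\|_{\mathbb B}\lesssim\frac{1}{\mu_{\ab}\big(B(\theta,|\theta-\varphi|)\big)},\qquad
\big\|\partial_{\theta}\mathcal K^{\gamma}(\theta,\varphi)\big\|_{\mathbb B}+\big\|\partial_{\varphi}\mathcal K^{\gamma}(\theta,\varphi)\big\|_{\mathbb B}\lesssim\frac{1}{|\theta-\varphi|\,\mu_{\ab}\big(B(\theta,|\theta-\varphi|)\big)}
$$
for $\theta\ne\varphi$. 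I would derive these from the subordination formula expressing $\mathcal H_t^{\ab}(\theta,\varphi)$ as a Gaussian-type average of the heat--Jacobi kernel $\mathcal G_u^{\ab}(\theta,\varphi)$, inserting the sharp two-sided bounds for $\mathcal G_u^{\ab}$ and its $\theta,\varphi$-derivatives available from \cite[Theorem A.1]{NS2} and \cite[Theorem 6.1]{parameters} (cf.\ the comments in Section~\ref{sec:prel}): this gives bounds for $\partial_r^{m}\mathcal H_r^{\ab}(\theta,\varphi)$ and its space derivatives, which one then feeds into the Caputo formula \eqref{fd}, integrates in $s$, and finally measures in $\mathbb B=L^2(dt/t)$; the estimate must be organized according to the relative sizes of $t$, $|\theta-\varphi|$ and the natural distance-like quantities of the Jacobi setting near $0$ and near $\pi$. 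Granting this, the weighted vector-valued Calder\'on--Zygmund theorem on spaces of homogeneous type yields boundedness of $\mathbb G^{\gamma}$, hence of $\mathfrak g_{\ab}^{\gamma,\flat}$, on every $L^p(v\,d\mu_{\ab})$ with $v\in A_p(d\mu_{\ab})$, $1<p<\infty$; combined with the reduction above this proves Theorem~\ref{ogrgff}, while the unweighted case $v\equiv1$ gives the companion polynomial statement, Theorem~\ref{ogrgf}.
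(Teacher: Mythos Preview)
Your strategy coincides with the paper's: conjugate by $\Psi^{\ab}$ to pass to the polynomial Jacobi setting, observe that the relevant power weight $(\Psi^{\ab})^{p-2}$ lies in $A_p^{\ab}$ precisely when $p\in E(\ab)$, and then prove the polynomial square function $g_{\ab}^{\gamma}$ is a vector-valued Calder\'on--Zygmund operator on $((0,\pi),d\mu_{\ab},|\cdot|)$ with values in $L^2(t^{2\gamma-1}dt)$, using the spectral identity for $L^2$-boundedness. This is exactly Theorem~\ref{ogrgf} and the reduction in the proof of Theorem~\ref{ogrgff}; even your weight $(\Psi^{\ab})^{p-2}$ is literally the paper's $w_{\ab}=(\Psi^{\ab})^p/\Psi^{2\alpha+1/2,2\beta+1/2}$, since $\Psi^{2\alpha+1/2,2\beta+1/2}=(\Psi^{\ab})^2$.

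The only substantive divergence is in how you propose to verify the standard kernel estimates, which you correctly flag as the main obstacle. You suggest subordination plus sharp heat-kernel bounds and their space derivatives. The paper instead works directly with the Poisson--Jacobi kernel: it imports from \cite{parameters} ready-made pointwise bounds for $\partial_\varphi^L\partial_\theta^N\partial_t^M\mathcal H_t^{\ab}(\theta,\varphi)$ (Corollary~3.5 and Lemma~3.8 there), expressed via the integral representation and the auxiliary quantity $\mathfrak q(\theta,\varphi,u,v)$, and then proves one new elementary lemma (Lemma~\ref{3.6}) that handles the extra $s$-integration coming from the Caputo formula~\eqref{fd}; Lemma~3.1 of \cite{parameters} converts the resulting expressions into the right-hand sides of \eqref{gr} and \eqref{grad}. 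This route avoids having to establish (or locate) derivative estimates for the heat kernel and the subsequent subordination computation; the references you cite in Section~\ref{sec:prel} give sharp bounds for the kernels themselves, but the space-derivative control you would need is not obviously contained there, so your sketch would require additional work at exactly that point.
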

We also need to know that $\mathfrak{g}_{\ab}^{\gamma}$ is essentially an isometry on $L^2(0,\pi)$,
or rather a polarized variant of this fact; see, for instance, \cite[Proposition 2.1 (ii)]{betsq}.
\begin{prop}\label{isomf}
Let $\ab>-1$ and $\gamma>0$. Then, for $f,g\in L^2(0,\pi)$,
$$
\langle f,g \rangle = \frac{2^{2\gamma}}{\Gamma(2\gamma)} \int_0^{\pi} \big\langle 
	\partial_t^\gamma H_t^{\ab}f(\theta), \partial_t^\gamma H_t^{\ab}g(\theta) 
	\big\rangle_{L^2(t^{2\gamma-1}dt)} \, d\theta
	+ \chi_{\{\alpha+\beta=-1\}}a_0^{\ab}(f)\,\overline{a_0^{\ab}(g)}.
$$
\end{prop}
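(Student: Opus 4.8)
The plan is to reduce the identity to the classical Segovia--Wheeden formula for the fractional derivative of a semigroup, applied spectrally to $\sqrt{L_{\ab}}$. First I would recall the elementary scalar identity: for $a > 0$ and $\gamma > 0$, with $m = \lfloor\gamma\rfloor+1$, one has
$$
\int_0^{\infty} \big| \partial_t^{\gamma} e^{-ta} \big|^2 t^{2\gamma-1}\, dt
 = a^{2\gamma} \int_0^{\infty} \Big| \frac{1}{\Gamma(m-\gamma)} \int_0^{\infty} (-1)^m e^{-(t+s)a} s^{m-\gamma-1}\, ds \Big|^2 t^{2\gamma-1}\, dt
 = \frac{\Gamma(2\gamma)}{2^{2\gamma}},
$$
where the last equality comes from computing $\partial_t^{\gamma} e^{-ta} = (-1)^m a^{\gamma} e^{-ta}$ (a direct consequence of $\int_0^\infty e^{-sa} s^{m-\gamma-1}\,ds = \Gamma(m-\gamma) a^{\gamma-m}$) and then the substitution $u = ta$ in $\int_0^\infty e^{-2ta} t^{2\gamma-1}\,dt$. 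The same computation, applied to the difference $e^{-ta} - e^{-tb}$ via polarization, gives $\int_0^\infty \partial_t^\gamma e^{-ta}\,\overline{\partial_t^\gamma e^{-tb}}\, t^{2\gamma-1}\,dt = \frac{\Gamma(2\gamma)}{2^{2\gamma}} (ab)^{\gamma}$ when $a,b>0$.

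Next I would expand both $H_t^{\ab}f$ and $H_t^{\ab}g$ in the orthonormal basis $\{\phi_n^{\ab}\}$ using \eqref{JPser}, apply $\partial_t^\gamma$ term by term using \eqref{dgHt} (which is legitimate by the polynomial bound \eqref{bphi} and the rapid decay of the exponentials for $t>0$), and then integrate in $\theta$ over $(0,\pi)$. Orthonormality of the $\phi_n^{\ab}$ collapses the double sum to a single sum:
$$
\int_0^\pi \big\langle \partial_t^\gamma H_t^{\ab}f(\theta), \partial_t^\gamma H_t^{\ab}g(\theta)\big\rangle_{L^2(t^{2\gamma-1}dt)}\, d\theta
 = \sum_{n=0}^\infty a_n^{\ab}(f)\,\overline{a_n^{\ab}(g)} \int_0^\infty \big(\lambda_n^{\ab}\big)^{\gamma} e^{-2t\sqrt{\lambda_n^{\ab}}}\, t^{2\gamma-1}\, dt.
$$
For $n$ such that $\lambda_n^{\ab} = (n+A_{\ab})^2 > 0$ the scalar computation above (with $a = \sqrt{\lambda_n^{\ab}}$, noting $(\lambda_n^{\ab})^{\gamma/2} = a^{\gamma}$) gives the value $\frac{\Gamma(2\gamma)}{2^{2\gamma}}$; multiplying by $\frac{2^{2\gamma}}{\Gamma(2\gamma)}$ yields exactly the coefficient $a_n^{\ab}(f)\overline{a_n^{\ab}(g)}$. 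The only term that fails is $n=0$ in the case $\alpha+\beta=-1$, where $\lambda_0^{\ab}=A_{\ab}^2=0$: then $\partial_t^\gamma H_t^{\ab}$ annihilates the constant-eigenfunction component, so that term contributes $0$ to the integral but $a_0^{\ab}(f)\overline{a_0^{\ab}(g)}$ to $\langle f,g\rangle$, which is precisely what the correction term $\chi_{\{\alpha+\beta=-1\}}a_0^{\ab}(f)\overline{a_0^{\ab}(g)}$ accounts for. Summing and invoking Parseval $\langle f,g\rangle = \sum_n a_n^{\ab}(f)\overline{a_n^{\ab}(g)}$ finishes the identity.

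The main obstacle is justifying the interchange of the $\theta$-integration, the $t$-integration against $t^{2\gamma-1}dt$, and the two summations — i.e., a Fubini/dominated-convergence argument. For fixed $t>0$ this is unproblematic since the series converge absolutely and uniformly in $\theta$ (again by \eqref{bphi} and the fast decay of $\exp(-t\sqrt{\lambda_n^{\ab}})$ in $n$), but near $t=0$ the weight $t^{2\gamma-1}$ may be non-integrable, so I would split $\int_0^\infty = \int_0^1 + \int_1^\infty$: on $[1,\infty)$ everything converges trivially, and on $(0,1]$ I would use the smoothing of $\partial_t^\gamma H_t^{\ab}$ together with the fact that $f,g \in L^2(0,\pi)$ have square-summable coefficients, bounding $\sum_n |\lambda_n^{\ab}|^{\gamma} e^{-2t\sqrt{\lambda_n^{\ab}}}|a_n^{\ab}(f)||a_n^{\ab}(g)|$ uniformly by a constant times $\|f\|_2\|g\|_2 t^{-2\gamma}$, which is integrable against $t^{2\gamma-1}\,dt$ only after one extra power of $t$ — so more carefully one keeps the full Gaussian-type factor and estimates $\int_0^1 (\lambda_n^{\ab})^\gamma e^{-2t\sqrt{\lambda_n^{\ab}}} t^{2\gamma-1}\,dt \le \int_0^\infty (\lambda_n^{\ab})^\gamma e^{-2t\sqrt{\lambda_n^{\ab}}} t^{2\gamma-1}\,dt = \frac{\Gamma(2\gamma)}{2^{2\gamma}}$, a bound uniform in $n$, making the double series dominated by $\frac{\Gamma(2\gamma)}{2^{2\gamma}}\sum_n |a_n^{\ab}(f)||a_n^{\ab}(g)| \le \frac{\Gamma(2\gamma)}{2^{2\gamma}}\|f\|_2\|g\|_2 < \infty$. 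With this uniform domination in hand, Fubini applies and the termwise evaluation above is rigorous.
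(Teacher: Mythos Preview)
Your argument is correct and is the standard spectral/Parseval computation; the paper itself does not prove this proposition but simply refers to \cite[Proposition~2.1(ii)]{betsq}, where the same approach is taken.

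One line should be removed, however. The polarized scalar identity you state,
\[
\int_0^\infty \partial_t^\gamma e^{-ta}\,\overline{\partial_t^\gamma e^{-tb}}\, t^{2\gamma-1}\,dt \;=\; \frac{\Gamma(2\gamma)}{2^{2\gamma}}\,(ab)^{\gamma},
\]
is false for $a\neq b$; the correct value is $(ab)^\gamma\,\Gamma(2\gamma)/(a+b)^{2\gamma}$. This does no harm to your proof, because you never use it: after integrating in $\theta$ and invoking orthonormality of $\{\phi_n^{\ab}\}$, only the diagonal terms $a=b=\sqrt{\lambda_n^{\ab}}$ survive, and there the formula is right. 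Simply delete that sentence. As for the Fubini justification, the tidiest way is to treat $g=f$ first, where every term is nonnegative and Tonelli applies with no estimate at all (this already yields \eqref{isometryf}), and then obtain the bilinear identity by polarization; your direct domination also works, but as written it explicitly covers only the swap of $\int_0^\infty dt$ with $\sum_n$, whereas the prior swap $\int_0^\pi\int_0^\infty = \int_0^\infty\int_0^\pi$ also needs the $g=f$ computation (via Cauchy--Schwarz in $\theta$) to be fully justified.
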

In particular, taking above $g=f$ we get
\begin{equation} \label{isometryf}
\|f\|_{2}^2 = \frac{2^{2\gamma}}{\Gamma(2\gamma)} \|\mathfrak{g}_{\ab}^{\gamma}\|_2^2
		+ \chi_{\{\alpha+\beta=-1\}} |a_0^{\ab}(f)|^2, \qquad f \in L^2(0,\pi).
\end{equation}

We are now ready to justify Theorem \ref{lpequivf}, assuming that Theorem \ref{ogrgff} holds.
\begin{proof}[Proof of Theorem \ref{lpequivf}]
In view of Theorem \ref{ogrgff} and the estimate $|a_0^{\ab}(f)| \lesssim \|f\|_p$ (the latter is a
simple consequence of H\"older's inequality), we get
$$
\|\mathfrak g_{\ab}^{\gamma}(f)\|_p+\chi_{\{\alpha+\beta=-1\}}|\,a_0^{\ab}(f)|\lesssim\|f\|_p, \qquad
	f \in L^p(0,\pi).
$$

To show the opposite relation, we use Proposition \ref{isomf} to write 
\begin{align*}
\|f\|_p		=\sup_{g\in L^{p'},\, \|g\|_{p'}=1}|\langle f,g \rangle| 
	&	=\sup_{g\in L^{p'},\, \|g\|_{p'}=1}
	\bigg|\frac{2^{2\gamma}}{\Gamma(2\gamma)} \int_0^{\pi} \big\langle 
	\partial_t^\gamma H_t^{\ab}f(\theta), \partial_t^\gamma H_t^{\ab}g(\theta) 
	\big\rangle_{L^2(t^{2\gamma-1}dt)} \, d\theta \\ & \qquad \qquad \qquad \qquad
	+ \chi_{\{\alpha+\beta=-1\}}a_0^{\ab}(f)\,\overline{a_0^{\ab}(g)} \bigg|.
\end{align*}
Applying now the Cauchy-Schwarz inequality to the inner product under the last integral, and then
H\"older's inequality and $L^{p'}$-boundedness of $\mathfrak{g}_{\ab}^{\gamma}$ (Theorem \ref{ogrgff}),
we conclude that
\begin{align*}
\|f\|_p & \lesssim \sup_{g\in L^{p'},\, \|g\|_{p'}=1}
	\bigg(\frac{2^{2\gamma}}{\Gamma(2\gamma)} \big|\big\langle 
	\mathfrak{g}_{\ab}^{\gamma}(f), \mathfrak{g}_{\ab}^{\gamma}(g)
	\big\rangle\big| + \chi_{\{\alpha+\beta=-1\}} |a_0^{\ab}(f)\,{a_0^{\ab}(g)}| \bigg) \\
& \lesssim \|\mathfrak{g}_{\ab}^{\gamma}(f)\|_p + \chi_{\{\alpha+\beta=-1\}} |a_0^{\ab}(f)|,
\end{align*}
uniformly in $f \in L^p(0,\pi)$.
\end{proof}

It remains to prove Theorem \ref{ogrgff}.
\subsection{{Proof of Theorem \ref{ogrgff}}} \label{ssec:g}
As we shall see, $L^p$-boundedness of $\mathfrak{g}_{\ab}^{\gamma}$ follows in a
straightforward manner from power-weighted
$L^p$-boundedness of an analogous fractional $g$-function in the framework of expansions into
Jacobi trigonometric polynomials. Thus we are going to study weighted counterpart of Theorem \ref{ogrgff}
in the above-mentioned setting. Our main tool will be vector-valued Calder\'on-Zygmund operator theory and its
implementation in the Jacobi context established in \cite{NS1,parameters}. We begin with a brief introduction
of the Jacobi trigonometric polynomial setting. For all these and further facts we refer to 
\cite{NS1,NS2,parameters}.

Let $\ab > -1$. 
The normalized Jacobi trigonometric polynomials are given by
$\P_n^{\ab} = \phi_n^{\ab}/\Psi^{\ab}$, $n \ge 0$. The system $\{\P_n^{\ab}:n \ge 0\}$ is an orthonormal
basis in $L^2((0,\pi),d\mu_{\ab})$, where
$$
d\m_{\ab}(\theta) = \Big(\sin\frac{\theta}2\Big)^{2\alpha+1} 
	\Big( \cos\frac{\theta}2\Big)^{2\beta+1}\, d\theta, \qquad \theta \in (0,\pi).
$$
Each $\P_n^{\ab}$ is an eigenfunction of the Jacobi Laplacian
$$
\J_{\ab} = - \frac{d^2}{d\theta^2} - \frac{\alpha-\beta+(\alpha+\beta+1)\cos\theta}{\sin \theta}
	\frac{d}{d\theta} + \Big( \frac{\alpha+\beta+1}{2}\Big)^2,
$$
the corresponding eigenvalue being $\lambda_n^{\ab}$. Thus $\J_{\ab}$ has a natural self-adjoint
extension in this context (denoted by still the same symbol), whose spectral resolution is given
in terms of $\P_n^{\ab}$.

The semigroup of operators $\{\mathcal{H}_t^{\ab}\}_{t \ge 0}$ generated in $L^2(d\mu_{\ab})$ by means
of the square root of $\J_{\ab}$ is called the Jacobi-Poisson semigroup. We have
\begin{equation} \label{JPpoly}
\mathcal H_t^{\ab}f(\theta) = \sum_{n=0}^{\infty} \exp\Big(-t\sqrt{\lambda_{n}^{\ab}}\Big)
	\big\langle f, \P_n^{\ab}\big\rangle_{d\mu_{\ab}} \P_n^{\ab}(\theta),
\end{equation}
the series being convergent not only in $L^2(d\mu_{\ab})$, but also pointwise if $t>0$.
Actually, the last series converges pointwise for any $f \in L^p(wd\mu_{\ab})$, $w \in A_p^{\ab}$,
$1 \le p < \infty$, providing a definition of $\mathcal{H}_t^{\ab}$, $t >0$, on these weighted spaces.
Here and elsewhere $A_p^{\ab}$ stands for the Muckenhoupt class of weights associated with 
the measure $\mu_{\ab}$ in $(0,\pi)$, see e.g.\ \cite[Section 1]{NS1} for the definition. 
Moreover, $\mathcal{H}_t^{\ab}f(\theta)$
is always a smooth function of $(t,\theta) \in (0,\infty)\times (0,\pi)$. 
All this can be verified with the aid of the bounds, see \eqref{bphi} and 
\cite[Section 2]{NS1},
\begin{align}
|\P_n^{\ab}(\theta)| &\lesssim (n+1)^{\alpha+\beta+2}, \qquad \theta \in (0,\pi), 
	\quad n \ge 0, \label{estjac}\\
\big|\big\langle f,\P_n^{\ab}\big\rangle_{d\mu_{\ab}}\big| & \lesssim \|f\|_{L^p(d\m_{\ab})} 
	(n+1)^{\alpha + \beta + 2}, \qquad n \ge 0; \label{growthFJnew}
\end{align}
here $w \in A_p^{\ab}$ and $1 \le p < \infty$.
There is also an integral representation of $\{\mathcal{H}_t^{\ab}\}_{t>0}$, valid on the weighted
$L^p$ spaces appearing above. We have
$$
\mathcal{H}_t^{\ab}f(\theta) = \int_{0}^{\pi} \mathcal{H}_t^{\ab}(\theta,\varphi)f(\varphi)\, d\m_{\ab}(\varphi),
	\qquad \theta \in (0,\pi), \quad t>0,
$$
where
$$ 
\mathcal{H}_t^{\ab}(\theta,\varphi) =\sum_{n=0}^{\infty} \exp\Big(-t\sqrt{\lambda_{n}^{\ab}}\Big)
	\P_n^{\ab}(\theta)\P_n^{\ab}(\varphi)
$$ 
is the Jacobi-Poisson kernel. A useful integral representation of $\mathcal{H}_t^{\ab}(\theta,\varphi)$
was established in \cite[Proposition 4.1]{NS1} for $\ab \ge -1/2$ and in \cite[Proposition 2.3]{parameters}
in the general case. This representation will implicitly play a crucial role in what follows, however we
decided not to invoke it here due to its complexity. 

Given $\gamma > 0$, we define the vertical fractional square function in the present setting by
\begin{equation*}
{g}_{\ab}^{\gamma}(f)(\theta)= 
	\big\| \partial_t^{\gamma}\mathcal{H}_t^{\ab}f(\theta) \big\|_{L^2(t^{2\gamma-1}dt)}.
\end{equation*} 
This definition makes sense pointwise for $f \in L^p(wd\mu_{\ab})$, $w \in A_p^{\ab}$, $1 \le p < \infty$,
as can be verified by combining \eqref{JPpoly} with \eqref{estjac} and \eqref{growthFJnew};
we leave details to the reader.
The following result not only implies Theorem \ref{ogrgff}, but certainly is also of independent interest. In
particular, it enhances \cite[Corollary 2.5]{NS1} and \cite[Corollary 5.2]{parameters}.
\begin{thm}\label{ogrgf}
Let $\ab>-1$ and $\gamma>0$. Then ${g}_{\ab}^{\gamma}$ is bounded on $L^p(w d\mu_{\ab})$, $w \in A_p^{\ab}$,
$1< p < \infty$, and from $L^1(w d\m_{\ab})$ to weak $L^1(w d\m_{\ab})$, $w \in A_1^{\ab}$.
\end{thm}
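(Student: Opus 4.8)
The plan is to realize $g_{\ab}^{\gamma}$ as the norm of a vector-valued Calder\'on--Zygmund operator taking values in the Hilbert space $\H = L^2(t^{2\gamma-1}\,dt)$, and then invoke the (weighted) vector-valued Calder\'on--Zygmund theory on the space of homogeneous type $((0,\pi),d\mu_{\ab},|\cdot|)$ as implemented in \cite{NS1,parameters}. Concretely, I would consider the operator $G^{\gamma}_{\ab}f(\theta) = \{\partial_t^{\gamma}\mathcal H_t^{\ab}f(\theta)\}_{t>0}$, so that $g_{\ab}^{\gamma}(f)(\theta) = \|G^{\gamma}_{\ab}f(\theta)\|_{\H}$, and its associated $\H$-valued kernel
$$
G^{\gamma}_{\ab}(\theta,\varphi) = \big\{\partial_t^{\gamma}\mathcal H_t^{\ab}(\theta,\varphi)\big\}_{t>0}.
$$
First I would check $L^2(d\mu_{\ab})$-boundedness: this is immediate from the spectral representation \eqref{JPpoly} together with the elementary identity $\partial_t^{\gamma}e^{-t\lambda} = (-1)^{\lfloor\gamma\rfloor+1}\lambda^{\gamma}e^{-t\lambda}$ (valid since $\lambda_n^{\ab}>0$, using $\alpha+\beta\neq-1$ is \emph{not} needed here because only $L^2$-boundedness, not an isometry, is required), which gives $\|g_{\ab}^{\gamma}(f)\|_{L^2(d\mu_{\ab})}^2 \simeq \sum_n (\lambda_n^{\ab})^{2\gamma}\,|\langle f,\P_n^{\ab}\rangle|^2 \int_0^\infty e^{-2t\sqrt{\lambda_n^{\ab}}}\, t^{2\gamma-1}\,dt \simeq \|f\|_{L^2(d\mu_{\ab})}^2$ after the change of variables $t\mapsto t/\sqrt{\lambda_n^{\ab}}$.

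The heart of the matter is the standard kernel estimates: I would prove the growth bound
$$
\big\|G^{\gamma}_{\ab}(\theta,\varphi)\big\|_{\H} \lesssim \frac{1}{\mu_{\ab}(B(\theta,|\theta-\varphi|))}
$$
and the smoothness bounds (in both variables)
$$
\big\|\partial_\theta G^{\gamma}_{\ab}(\theta,\varphi)\big\|_{\H} + \big\|\partial_\varphi G^{\gamma}_{\ab}(\theta,\varphi)\big\|_{\H} \lesssim \frac{1}{|\theta-\varphi|\,\mu_{\ab}(B(\theta,|\theta-\varphi|))}, \qquad \theta\neq\varphi.
$$
The route to these is exactly the one used in \cite{NS1,parameters} for the Poisson kernel itself and its $t$-derivatives: start from the integral representation of $\mathcal H_t^{\ab}(\theta,\varphi)$ from \cite[Proposition 4.1]{NS1} (for $\ab\geq-1/2$) and \cite[Proposition 2.3]{parameters} (general case), differentiate under the integral sign, and estimate. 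The fractional derivative $\partial_t^{\gamma}$, through \eqref{fd}, is an average of integer-order $t$-derivatives at shifted times $t+s$ against $s^{m-\gamma-1}\,ds$; since $\mathcal H_t^{\ab}(\theta,\varphi)$ enjoys, as a function of $t$, bounds of the form $|\partial_t^j \mathcal H_t^{\ab}(\theta,\varphi)| \lesssim t^{-j}\,(\text{Poisson-type bound in }t)$ uniformly, one reduces the $\H$-norm of $G^{\gamma}_{\ab}(\theta,\varphi)$ to a one-dimensional integral in $t$ that is handled by the same elementary computations (of the type $\int_0^\infty \big(\tfrac{t}{t+c}\big)^{2\gamma}\,\tfrac{dt}{t}\,(\ldots)$) as in the Segovia--Wheeden-style arguments. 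Essentially this amounts to transferring \cite[Corollary 2.5]{NS1} and \cite[Corollary 5.2]{parameters}, which handle the $g$-function with integer-order derivatives, to the fractional order via the averaging formula \eqref{fd}. Once the standard estimates are in hand, the general weighted vector-valued Calder\'on--Zygmund theorem (as stated in \cite{NS1,parameters}) delivers both the $L^p(w\,d\mu_{\ab})$-boundedness for $1<p<\infty$, $w\in A_p^{\ab}$, and the weak type $(1,1)$ for $w\in A_1^{\ab}$.

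I expect the main obstacle to be the technical bookkeeping in the kernel estimates: one must commute $\partial_t^{\gamma}$ (an integral operator in $t$) with the various derivatives and integrals appearing in the representation of $\mathcal H_t^{\ab}(\theta,\varphi)$, and verify that all the resulting $t$-integrals, after insertion of the $s^{m-\gamma-1}$ weight and the shift $t\mapsto t+s$, still converge and reproduce the required $\mu_{\ab}$-doubling bounds uniformly in $\theta,\varphi$; near the endpoints $\theta,\varphi \to 0,\pi$ one has to track the $\Psi^{\ab}$-type factors and the boundary behavior carefully, exactly as in \cite{parameters}. A secondary point, needed to legitimately identify $g_{\ab}^{\gamma}(f)(\theta)$ with $\|\int G^{\gamma}_{\ab}(\theta,\varphi)f(\varphi)\,d\mu_{\ab}(\varphi)\|_{\H}$ for $f$ off the support or in the relevant weighted $L^p$ spaces, is a routine Fubini/dominated-convergence justification for interchanging $\partial_t^{\gamma}$, the series \eqref{JPpoly}, and the spatial integral — this is enabled by \eqref{estjac} and \eqref{growthFJnew} and I would relegate it to the reader as the paper does elsewhere.
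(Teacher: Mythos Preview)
Your proposal is correct and follows essentially the same approach as the paper: linearize $g_{\ab}^{\gamma}$ as the $L^2(t^{2\gamma-1}dt)$-valued operator $G_{\ab}^{\gamma}$, verify $L^2(d\mu_{\ab})$-boundedness spectrally, prove standard kernel estimates for $\{\partial_t^{\gamma}\mathcal H_t^{\ab}(\theta,\varphi)\}_{t>0}$ via the representation and bounds from \cite{NS1,parameters} combined with the averaging formula \eqref{fd}, and conclude by the general weighted vector-valued Calder\'on--Zygmund theory. The paper makes the ``technical bookkeeping'' you anticipate precise by splitting according to $t+s<1$ versus $t+s\ge 1$ (using \cite[Corollary 3.5]{parameters} and \cite[Lemma 3.8]{parameters} respectively) and isolating the key scalar estimate as a separate lemma (Lemma \ref{3.6}); one minor slip in your sketch is the exponent $(\lambda_n^{\ab})^{2\gamma}$ in the $L^2$ computation, which should be $(\lambda_n^{\ab})^{\gamma}$.
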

We give the proof of Theorem \ref{ogrgf} in Sections \ref{ssec:gP}-\ref{ssec:stand} below. 
First, however, let us see how Theorem \ref{ogrgf} allows us to conclude Theorem \ref{ogrgff}.
\begin{proof}[{Proof of Theorem \ref{ogrgff}}]
We argue similarly as in the proof of \cite[Proposition 2.2]{L2}. Observe that
$$
\mathfrak{g}_{\ab}^{\gamma}(f) = \Psi^{\ab}\, g_{\ab}^{\gamma}(\Psi^{-\alpha-1,-\beta-1}f).
$$
Furthermore, since $w_{\ab} := (\Psi^{\ab})^p/\Psi^{2\alpha+1/2,2\beta+1/2} \in A_p^{\ab}$, $p \in E(\ab)$
(see the proof of \cite[Proposition 2.2]{L2}), Theorem \ref{ogrgf} shows that $g_{\ab}^{\gamma}$
is bounded on $L^p(w_{\ab}d\mu_{\ab})$ when $p \in E(\ab)$. Then we get
\begin{align*}
\|\mathfrak{g}_{\ab}^{\gamma}(f)\|_{p}^p & = \int_0^{\pi} 
	\big|g_{\ab}^{\gamma}(\Psi^{-\alpha-1,-\beta-1}f)(\theta)\big|^p w_{\ab}(\theta)\, d\mu_{\ab}(\theta) \\
& \lesssim \int_0^{\pi} \big|f(\theta) \Psi^{-\alpha-1,-\beta-1}(\theta)\big|^p 
	w_{\ab}(\theta)\, d\mu_{\ab}(\theta)\\
& = \|f\|_p^p,
\end{align*}
uniformly in $f \in L^p(0,\pi)$. The conclusion follows.
\end{proof}

\subsection{{Proof of Theorem \ref{ogrgf}}} \label{ssec:gP}
We employ the theory of Calder\'on-Zygmund operators specified to the space of homogeneous type
$((0,\pi),d\mu_{\ab},|\cdot|)$, where $|\cdot|$ stands for the ordinary distance. Let us briefly
recall the related notions; for more details see \cite{NS1,parameters}.

Let $\mathbb{B}$ be a Banach space and let $K(\theta,\varphi)$ be a kernel defined on
$(0,\pi)\times (0,\pi) \backslash \{(\theta,\varphi):\theta=\varphi\}$ and taking values in $\mathbb{B}$.
We say that $K(\theta,\varphi)$ is a standard kernel 
if it satisfies the growth estimate
\begin{equation} \label{gr}
\|K(\theta,\varphi)\|_{\mathbb{B}} \lesssim \frac{1}{\m_{\ab}(B(\theta,|\theta-\varphi|))}
\end{equation}
and the smoothness estimates
\begin{align}
\| K(\theta,\varphi) - K(\theta',\varphi)\|_{\mathbb{B}} 
	& \lesssim \frac{|\theta-\theta'|}{|\theta-\varphi|}\;
	\frac{1}{\m_{\ab}(B(\theta,|\theta-\varphi|))}, \qquad |\theta-\varphi| > 2|\theta-\theta'|,
	\label{sm1} \\
\| K(\theta,\varphi) - K(\theta,\varphi')\|_{\mathbb{B}} & 
	\lesssim \frac{|\varphi-\varphi'|}{|\theta-\varphi|}\;
	\frac{1}{\m_{\ab}(B(\theta,|\theta-\varphi|))}, \qquad |\theta-\varphi| > 2|\varphi-\varphi'|; \label{sm2}
\end{align}
here $B(\theta,r)$ denotes the ball (interval) centered at $\theta$ and of radius $r$.
As it was observed in \cite[Section 4]{parameters}, even when $K(\theta,\varphi)$ is not scalar-valued, 
the difference conditions
\eqref{sm1} and \eqref{sm2} can be replaced by the more convenient gradient condition
\begin{equation} \label{grad}
\|\partial_{\theta} K(\theta,\varphi)\|_{\mathbb{B}} + \|\partial_{\varphi} K(\theta,\varphi)\|_{\mathbb{B}} \lesssim
\frac{1}{|\theta-\varphi| \m_{\ab}(B(\theta,|\theta-\varphi|))}.
\end{equation}
The derivatives here are taken in the weak sense, which means that for any $\texttt{v}\in\mathbb{B}^*$
\begin{equation} \label{pd}
\langle \texttt{v},\partial_{\theta}K(\theta,\varphi)\rangle
=\partial_{\theta}\langle \texttt{v},K(\theta,\varphi)\rangle
\end{equation}
and similarly for $\partial_{\varphi}.$

A linear operator $T$ assigning to each $f \in L^2(d\m_{\ab})$ a measurable $\mathbb{B}$-valued
function $Tf$ on $(0,\pi)$ is said to be a (vector-valued) Calder\'on-Zygmund operator 
associated with $\mathbb{B}$ if
\begin{itemize}
\item[(a)] $T$ is bounded from $L^2(d\m_{\ab})$ to $L^2_{\mathbb{B}}(d\m_{\ab})$, and
\item[(b)] there exists a standard $\mathbb{B}$-valued kernel $K(\theta,\varphi)$ such that
\begin{equation*}
Tf(\theta) = \int_{0}^{\pi} K(\theta,\varphi) f(\varphi) \, d\m_{\ab}(\varphi), \qquad \textrm{a.e.}
	\;\; \theta \notin \support f,
\end{equation*}
for every $f\in L^2(d\m_{\ab})$ with compact support in $(0,\pi)$.
\end{itemize}
When (b) holds,
we say that $T$ is associated with $K$.

Obviously, $g_{\ab}^{\gamma}$ is not linear, but it can be interpreted in a standard way as a linear
operator
$$
G_{\ab}^{\gamma} \colon f \mapsto \big\{ \partial_{t}^{\gamma}\mathcal{H}_t^{\ab}f\big\}_{t > 0}
$$
mapping into $\mathbb{B}$-valued functions, where $\mathbb{B} = L^2(t^{2\gamma-1}dt)$.
The following result together with a general Calder\'on-Zygmund theory and well-known arguments
(see the proof of \cite[Corollary 2.5]{NS1} and also references given there) justifies Theorem \ref{ogrgf}.
\begin{thm} \label{thm:CZ}
Let $\ab > -1$ and $\gamma > 0$. Then $G_{\ab}^{\gamma}$ is a vector-valued Calder\'on-Zygmund operator
in the sense of the space $((0,\pi),d\mu_{\ab},|\cdot|)$, associated with the Banach space
$\mathbb{B} = L^2(t^{2\gamma-1}dt)$.
\end{thm}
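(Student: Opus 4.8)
The plan is to verify the two defining conditions of a vector-valued Calder\'on-Zygmund operator for $G_{\ab}^{\gamma}$ with $\mathbb{B} = L^2(t^{2\gamma-1}dt)$. For condition (a), the $L^2(d\mu_{\ab})$-to-$L^2_{\mathbb{B}}(d\mu_{\ab})$ boundedness is essentially the spectral identity behind Proposition \ref{isomf}: using \eqref{JPpoly} and the spectral calculus one computes $\|G_{\ab}^{\gamma}f\|_{L^2_{\mathbb B}(d\mu_{\ab})}^2 = \int_0^\pi \|\partial_t^\gamma \mathcal H_t^{\ab}f(\theta)\|_{L^2(t^{2\gamma-1}dt)}^2\, d\mu_{\ab}(\theta)$, and since $\partial_t^\gamma$ acting on $\exp(-t\sqrt{\lambda_n^{\ab}})$ produces $(-1)^m(\lambda_n^{\ab})^{\gamma/2}\exp(-t\sqrt{\lambda_n^{\ab}})$, the $t$-integral $\int_0^\infty (\lambda_n^{\ab})^{\gamma}\exp(-2t\sqrt{\lambda_n^{\ab}})\,t^{2\gamma-1}\,dt$ is a constant multiple of $1$ (independent of $n$, by the substitution $t\sqrt{\lambda_n^{\ab}}\mapsto t$), whence $\|G_{\ab}^{\gamma}f\|_{L^2_{\mathbb B}(d\mu_{\ab})}^2 \simeq \sum_n |\langle f,\P_n^{\ab}\rangle_{d\mu_{\ab}}|^2 = \|f\|_{L^2(d\mu_{\ab})}^2$ (with the harmless exception of the $n=0$ term when $\alpha+\beta=-1$, in which case one subtracts it off). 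This gives (a), in fact as an isometry up to the bottom-eigenvalue correction.

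For condition (b), the kernel is $K(\theta,\varphi) = \{\partial_t^\gamma \mathcal H_t^{\ab}(\theta,\varphi)\}_{t>0}$, and one must establish the growth estimate \eqref{gr} and, in view of the remark following \eqref{grad}, the gradient estimate \eqref{grad} with $\mathbb B = L^2(t^{2\gamma-1}dt)$. The strategy here is to insert the Caputo definition \eqref{fd} of $\partial_t^\gamma$, so that $\partial_t^\gamma \mathcal H_t^{\ab}(\theta,\varphi)$ becomes an iterated integral of $\partial_s^m \mathcal H_s^{\ab}(\theta,\varphi)$ against $s^{m-\gamma-1}$, and then feed in the integral representation of the Jacobi-Poisson kernel from \cite[Proposition 4.1]{NS1} and \cite[Proposition 2.3]{parameters}. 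This is precisely the machinery already developed in \cite{NS1,parameters} for the (integer-order) vertical $g$-functions; the fractional order only changes the weight in the $t$-variable and the number $m$ of $t$-derivatives, so the estimates should go through by the same calculus of the kernel representation — one bounds $\|\partial_t^\gamma \mathcal H_t^{\ab}(\theta,\varphi)\|_{L^2(t^{2\gamma-1}dt)}$ and its $\theta$- and $\varphi$-derivatives by the same type of pointwise-in-the-representation estimates that yield \eqref{gr}, \eqref{grad}, integrating out the $t$-variable at the end using that $\int_0^\infty (\cdot)^2 t^{2\gamma-1}\,dt$ converges thanks to the exponential decay in $t$ coming from the Poisson kernel. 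The weak derivative interpretation \eqref{pd} is standard once one has the requisite dominated-convergence bounds.

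The main obstacle I expect is the kernel gradient estimate \eqref{grad}: one has to differentiate the (complicated) integral representation of $\mathcal H_t^{\ab}(\theta,\varphi)$ in $\theta$ (or $\varphi$), then apply the fractional derivative $\partial_t^\gamma$ via \eqref{fd}, then take the $L^2(t^{2\gamma-1}dt)$ norm, and control the result by $\big(|\theta-\varphi|\,\mu_{\ab}(B(\theta,|\theta-\varphi|))\big)^{-1}$ uniformly. The bookkeeping is heavier than in the scalar or integer-order case because of the extra $s$-integration in \eqref{fd} and because one must track the interplay between the non-negative powers $s^{m-\gamma-1}$ near $s=0$ and the decay of $\partial_s^m\mathcal H_s^{\ab}(\theta,\varphi)$. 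However, no genuinely new phenomenon arises: the exponential localization of the Jacobi-Poisson kernel makes all the $t$- and $s$-integrals converge, and the spatial estimates are exactly those of \cite{NS1,parameters}. Once \eqref{gr} and \eqref{grad} are in hand, Theorem \ref{thm:CZ} follows, and then Theorem \ref{ogrgf} is a consequence of general vector-valued Calder\'on-Zygmund theory on the space of homogeneous type $((0,\pi),d\mu_{\ab},|\cdot|)$, as in the proof of \cite[Corollary 2.5]{NS1}.
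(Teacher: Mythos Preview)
Your plan is correct and matches the paper's approach: $L^2$-boundedness via the spectral isometry, kernel association via standard arguments, and standard estimates via the machinery of \cite{NS1,parameters} applied to $\partial_t^\gamma \mathcal{H}_t^{\ab}(\theta,\varphi)$ through the Caputo formula \eqref{fd}.

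One refinement worth noting: your remark that the $t$-integral converges ``thanks to the exponential decay in $t$ coming from the Poisson kernel'' is only accurate for large $t$ (the paper handles $t+s\ge 1$ via Lemma~\ref{3.8}). For small $t$ the Poisson kernel estimates from \cite{parameters} are of the form $(t^2+\q)^{-\eta}$, and after inserting the Caputo integral one must control expressions of the type
\[
\int_0^1 \bigg(\int_0^1 \frac{t^{2\gamma-1}\,dt}{\big((t+s)^2+\q\big)^{\eta}}\bigg)^{1/2} s^{m-\gamma-1}\,ds.
\]
This is precisely the ``heavier bookkeeping'' you anticipate, and the paper isolates it as a separate elementary lemma (Lemma~\ref{3.6}) that does not appear in \cite{NS1,parameters}; it is proved by splitting the regions $t<s$, $t>s$, $s\lessgtr\sqrt{\q}$ and yields either $\q^{-(\eta-\xi-\gamma-1)/2}$ or $\log(4/\q)$. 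So your claim that ``no genuinely new phenomenon arises'' is right in spirit---the lemma is elementary calculus---but the fractional case does require this one additional ingredient beyond what is in the cited papers.
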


The most difficult step in proving Theorem \ref{thm:CZ} is showing that the vector-valued
kernel
$$
\mathcal{G}_{\ab}^{\gamma}(\theta,\varphi) = 
	\big\{ \partial_t^{\gamma} \mathcal{H}_t^{\ab}(\theta,\varphi)\big\}_{t>0}
$$
satisfies the standard estimates. This is the content of the next lemma.
\begin{lem} \label{stand}
Let $\ab > -1$ and $\gamma > 0$. Then $\mathcal{G}_{\ab}^{\gamma}(\theta,\varphi)$ satisfies \eqref{gr}
and \eqref{grad} with $\mathbb{B} = L^2(t^{2\gamma-1}dt)$.
\end{lem}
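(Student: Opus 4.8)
The plan is to derive the two standard estimates \eqref{gr} and \eqref{grad} for $\mathcal G_{\ab}^{\gamma}(\theta,\varphi)$ from the non-fractional bounds for the Jacobi--Poisson kernel and its spatial derivatives already available in \cite{NS1,parameters}, by reducing everything to a single estimate for the Caputo derivative of the basic profile. The starting point is the integral representation of $\mathcal H_t^{\ab}(\theta,\varphi)$ recalled from \cite[Proposition 2.3]{parameters}: it expresses the Jacobi--Poisson kernel as a finite combination of terms of the form $\int\!\!\int \Psi^{\lambda}(t,\q)\,\pia\,\pib$ (and analogues built from $\widetilde\Psi^{\lambda}$), where $\q=\q(\theta,\varphi,u,v)$ is the relevant distance quantity and $\lambda$ runs over a finite family of parameters depending on $\ab$. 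The crucial auxiliary claim I would isolate and prove first is that, for every such $\lambda$ and every $\gamma>0$,
$$
\big\|\partial_t^{\gamma}\Psi^{\lambda}(t,\q)\big\|_{L^2(t^{2\gamma-1}\,dt)}\lesssim \q^{\,1/2-\lambda},
$$
uniformly in all the variables, and likewise with $\partial_{\q}\Psi^{\lambda}$ (equivalently $\Psi^{\lambda+1}$, up to constants) in place of $\Psi^{\lambda}$. In words: passing from the ordinary $t$-derivative to the fractional one and then taking the $\mathbb B$-norm produces exactly the same $\q$-decay as the pointwise size of $\Psi^{\lambda}$ at the natural scale $t\simeq\sqrt{\q}$, which is precisely the quantity estimated in \cite{NS1,parameters}.

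This claim is proved by a homogeneity argument. Substituting $t=\sqrt{\q}\,\tau$ (and using the decay of $\Psi^{\lambda}$ for large $t$ to truncate), the scaling identity for the Caputo derivative \eqref{fd} gives $\partial_t^{\gamma}\Psi^{\lambda}(t,\q)\approx \q^{1/2-\lambda-\gamma/2}\,(\partial_\tau^{\gamma}\psi)(t/\sqrt{\q})$, where $\psi(\tau)=\tau/(\tau^2+1)^{\lambda}$ is a fixed profile; inserting this into $\int_0^{\infty}|\cdot|^2 t^{2\gamma-1}\,dt$ and rescaling back yields $\q^{1-2\lambda}$ times the one-dimensional integral $\int_0^{\infty}|\partial_\tau^{\gamma}\psi(\tau)|^2\tau^{2\gamma-1}\,d\tau$, a finite constant because $\partial_\tau^{\gamma}\psi$ decays fast enough at both ends ($\psi$ being $O(\tau)$ near $0$, with polynomial decay at infinity). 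One then justifies fractional differentiation under the integral sign, $\partial_t^{\gamma}\mathcal H_t^{\ab}(\theta,\varphi)=\int\!\!\int\partial_t^{\gamma}\Psi^{\lambda}(t,\q)\,\pia\,\pib$ for $\theta\neq\varphi$, and moving the $\mathbb B$-norm inside, by dominated convergence based on the building-block bound just obtained; combining this with the claim gives $\|\mathcal G_{\ab}^{\gamma}(\theta,\varphi)\|_{\mathbb B}\lesssim \int\!\!\int \q^{\,1/2-\lambda}\,\pia\,\pib$, which is exactly the double integral that \cite{parameters} bounds by $1/\m_{\ab}(B(\theta,|\theta-\varphi|))$. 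This is \eqref{gr}.

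For the gradient condition \eqref{grad} the procedure is the same with one more layer of differentiation: using the weak-derivative convention \eqref{pd} together with Fubini, one interchanges $\partial_{\theta}$ (resp.\ $\partial_{\varphi}$) with $\partial_t^{\gamma}$ and with the integration against $\pia\,\pib$; each $\partial_{\theta}$ lands on the factor $\q$ inside $\Psi^{\lambda}$, producing $(\partial_{\theta}\q)\,\partial_{\q}\Psi^{\lambda}$ with $|\partial_{\theta}\q|\lesssim 1$ and $\partial_{\q}\Psi^{\lambda}$ of type $\Psi^{\lambda+1}$, hence reducing to building blocks of the same shape with $\lambda$ raised by one. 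Applying the auxiliary claim again one arrives at the double integral that \cite[Section 4]{parameters} estimates by $1/\big(|\theta-\varphi|\,\m_{\ab}(B(\theta,|\theta-\varphi|))\big)$, which gives \eqref{grad}. The main obstacle, in my view, is executing the auxiliary claim cleanly and uniformly: controlling the nonlocal Caputo operator (its $m=\lfloor\gamma\rfloor+1$ ordinary derivatives followed by an improper $s$-integral) applied to the explicit but not-so-simple profile, making sure the rescaled one-dimensional integral genuinely converges for every $\gamma>0$ and every $\lambda$ that occurs, handling the large-$t$ regime and the remainder terms in the representation of \cite{parameters}, and keeping track of the degeneracies of $\m_{\ab}$ near the endpoints $\theta=0,\pi$ so that the final bounds match the measure of the ball exactly. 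Once the auxiliary claim is in place, the rest is a routine adaptation of the arguments already carried out for $\mathcal H_t^{\ab}$ in \cite{NS1,parameters}.
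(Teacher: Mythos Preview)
Your route differs from the paper's. The paper does not attempt to apply $\partial_t^{\gamma}$ to the profile $\Psi^{\lambda}(t,\q)$ and rescale; instead it feeds the Caputo integral into Minkowski's inequality and uses the ready-made pointwise bounds on $\partial_\varphi^L\partial_\theta^N\partial_t^M\mathcal H_t^{\ab}$ from \cite[Corollary~3.5 and Lemma~3.8]{parameters}, splitting according to $t+s<1$ versus $t+s\ge 1$. The large-time part is handled by exponential decay. The small-time part produces the double $(s,t)$-integral
\[
\int_0^1\bigg(\int_0^1\frac{t^{2\gamma-1}\,dt}{((t+s)^2+\q)^{\eta}}\bigg)^{1/2}s^{m-\gamma-1}\,ds,
\]
and the paper proves a dedicated lemma (Lemma~\ref{3.6}) estimating this by either $\q^{-(\eta-\xi-\gamma-1)/2}$ or $\log(4/\q)$, depending on whether $\eta-\xi-\gamma>1$; Lemma~\ref{3.1} then converts these into the right-hand sides of \eqref{gr} and \eqref{grad}.

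Your homogeneity argument is essentially a compact way to compute that same $(s,t)$-integral, and for $\ab\ge -1/2$ it would work: there the relevant exponents are large enough that your rescaled one-dimensional integral converges. But the obstacle you flag is a genuine gap, not a formality. When $-1<\ab<-1/2$ the estimates from \cite{parameters} contain terms with exponent $\alpha+\beta+3/2+(Kk+Rr)/2$ which can be $\le 1/2$; for those the tail $\int^\infty|\partial_\tau^{\gamma}\psi(\tau)|^2\tau^{2\gamma-1}d\tau\sim\int^\infty\tau^{1-4\lambda}d\tau$ diverges and your argument gives nothing. This is exactly the case $\eta-\xi-\gamma\le 1$ in the paper's Lemma~\ref{3.6}, where only a logarithmic bound $\log(4/\q)$ survives, and the paper then absorbs it separately via $\mu_{\ab}(B(\theta,|\theta-\varphi|))\lesssim|\theta-\varphi|^{\epsilon}$. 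Two smaller points: the actual profile $\sinh(t/2)/(\cosh(t/2)-1+\q)^{\lambda}$ is not homogeneous in $(t,\sqrt{\q})$, so the scaling identity for $\partial_t^{\gamma}$ is only heuristic and you would anyway have to pass to bounds of the form $(t^2+\q)^{-\eta}$ before rescaling; and the truncation separating small and large time must be in $t+s$, not in $t$ alone, since the Caputo integral in \eqref{fd} samples the kernel at arbitrarily large argument regardless of $t$.
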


On the other hand, $L^2$-boundedness of $G_{\ab}^{\gamma}$ follows readily from the same property
of $g_{\ab}^{\gamma}$ (notice that an analogue of \eqref{isometryf} holds for $g_{\ab}^{\gamma}$).
Moreover, the fact that $G_{\ab}^{\gamma}$ is indeed associated with the kernel 
$\mathcal{G}_{\ab}^{\gamma}(\theta,\varphi)$ can be verified with the aid of quite standard arguments,
following for instance the strategy in the proof of \cite[Proposition 2.5]{L1}. The tools needed
to adapt the reasoning are the estimates \eqref{estjac} and \eqref{growthFJnew}, $L^2$-boundedness
of $G_{\ab}^{\gamma}$ and the growth condition \eqref{gr} for the kernel 
$\mathcal{G}_{\ab}^{\gamma}(\theta,\varphi)$. 

Thus Theorem \ref{thm:CZ}, hence also Theorem \ref{ogrgf}, 
will be justified once we prove Lemma \ref{stand}.

\subsection{{Proof of Lemma \ref{stand}}} \label{ssec:stand}
We will make use of the machinery elaborated in \cite{NS1,parameters}. Therefore we need to invoke some
technical results from \cite{parameters} to make the proof of Lemma \ref{stand} essentially self-contained.
However, we try to be as concise as possible and so for any unexplained symbols or notation we refer to
\cite{parameters}. Let 
$$
q(\theta,\varphi,u,v) = 1 - u \sin\frac{\theta}2 \sin\frac{\varphi}2
	- v \cos\frac{\theta}2 \cos\frac{\varphi}2, \qquad \theta,\varphi \in (0,\pi), \quad u,v \in [-1,1].
$$
We will often omit the arguments and write simply $\q$ instead of $q(\theta,\varphi,u,v)$.
Note that $0 \le \q \le 2$ and $\q \gtrsim |\theta-\varphi|^2$.

\begin{lem}[{\cite[Corollary 3.5]{parameters}}]\label{3.5}
Let $M,N \in \mathbb{N}$ and $L \in \{0 , 1\}$ be fixed. 
The following estimates hold uniformly in $t\in (0,1]$ and $\theta,\varphi \in (0,\pi)$.
\begin{itemize}
\item[(i)]
If $\a,\b \ge -1\slash 2$, then 
\begin{align*}
\big| 
	 \partial_\vp^L \partial_\t^N \partial_t^M 
\mathcal{H}_{t}^{\ab}(\t,\vp)  
\big|  
	\lesssim 
\iint
\frac{\pia \, \pib}{(t^2 + \q)^{   \a + \b + 3\slash 2 + (L+N+M)\slash 2  }}.
\end{align*}
\item[(ii)]
If $-1 < \a < -1\slash 2 \le \b$, then 
\begin{align*}
\big| 
	 \partial_\vp^L \partial_\t^N \partial_t^M 
\mathcal{H}_{t}^{\ab}(\t,\vp)  
\big|  
	&\lesssim 
1 + 
\sum_{K=0,1}
\sum_{k=0,1,2} 
\bigg( \st+\svp \bigg)^{Kk}  \\ 
& \qquad \qquad \times
\iint
\frac{\piK \, \pib}{(t^2 + \q)^{   \a + \b + 3\slash 2 + (L+N+M + Kk)\slash 2  }}.
\end{align*}
\item[(iii)]
If $-1 < \b < -1\slash 2 \le \a$, then 
\begin{align*}
\big| 
	\partial_\vp^L \partial_\t^N \partial_t^M 
\mathcal{H}_{t}^{\ab}(\t,\vp) 
\big| 
	&\lesssim 
1 + 
\sum_{R=0,1}
\sum_{r=0,1,2} 
\bigg( \ct + \cvp \bigg)^{Rr} \\
& \qquad \qquad \times
\iint
\frac{\pia \, \piR}{(t^2 + \q)^{   \a + \b + 3\slash 2 + (L+N+M +Rr)\slash 2  }}.
\end{align*}
\item[(iv)] 
If $-1 < \a,\b < -1\slash 2$, then
\begin{align*}
\big| 
	\partial_\vp^L \partial_\t^N \partial_t^M 
\mathcal{H}_{t}^{\ab}(\t,\vp) 
\big| 
	&\lesssim 
1 + 
\sum_{K,R=0,1}
\sum_{k,r=0,1,2} 
\bigg( \st+\svp \bigg)^{Kk} \bigg( \ct + \cvp \bigg)^{Rr} 
\\ 
& \qquad \qquad \times
\iint
\frac{\piK \, \piR}{(t^2 + \q)^{   \a + \b + 3\slash 2 + (L+N+M + Kk +Rr)\slash 2  }}.
\end{align*}
\end{itemize}
\end{lem}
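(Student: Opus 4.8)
The plan is to reduce the whole statement to one–variable, scalar estimates for the profile functions appearing in the closed integral representation of the Jacobi–Poisson kernel, and then differentiate under the integral sign. The starting point is the representation of $\mathcal H_t^{\ab}(\t,\vp)$ established in \cite[Proposition 2.3]{parameters}, which for $t\in(0,1]$ expresses the kernel as a constant multiple of a double integral over $(u,v)\in[-1,1]^2$, against $\pia\,\pib$, of a profile $\Plam$ whose second argument is exactly $\q=q(\t,\vp,u,v)$ and whose exponent $\lambda$ is tied to $\alpha+\beta$. Applying $\partial_\vp^L\partial_\t^N\partial_t^M$ and passing the derivatives inside the integral (justified by dominated convergence using the scalar bounds below) reduces everything to differentiating $\Plam$ in its two slots: the time derivatives act directly, whereas $\partial_\t$ and $\partial_\vp$ are converted by the chain rule into $\q$-derivatives of the profile times derivatives of $\q$ itself.

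First I would record the elementary behaviour of $\q$. Besides the bounds $0\le\q\le2$ already noted, a direct computation from the definition gives that $\partial_\t^2\q,\partial_\vp^2\q,\partial_\t\partial_\vp\q$ are all $O(1)$, while the first derivatives satisfy $|\partial_\t\q|+|\partial_\vp\q|\lesssim\sqrt{\q}\le\sqrt{t^2+\q}$, uniformly in $u,v$; this half-power gain is the mechanism producing the exponent $(L+N+M)/2$ rather than a full power per spatial derivative. In parallel I would quote from \cite{parameters} the scalar building-block estimates, of the shape
\[
\big|\partial_t^M\,(\partial_\q)^{S}\,\Plam\big|\lesssim (t^2+\q)^{-\lambda-M/2-S},\qquad t\in(0,1],\ 0\le\q\le2,
\]
which reflect the parabolic scaling $t\sim\sqrt{\q}$, so that $\partial_t$ costs a half power and $\partial_\q$ a full power. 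Combining the two ingredients through the Faà di Bruno formula, every term produced by $\partial_\vp^L\partial_\t^N\partial_t^M$ acting on $\Plam$ is controlled by $(t^2+\q)^{-\lambda-(L+N+M)/2}$, with the effective base exponent coming out to $\alpha+\beta+3/2$ once the $t$-prefactor in $\Plam$ is taken into account. The dominant contribution is the one in which every spatial derivative falls on the $\q$-slot: each raises the $\q$-order by one, costing a full power through $\partial_\q\Plam$, but simultaneously supplies a factor $\partial_\t\q$ or $\partial_\vp\q$ worth a gained half power, for a net half power apiece; all remaining terms, in which several derivatives pile onto one factor of $\q$ and thus produce bounded higher derivatives of $\q$, are of strictly lower order. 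Integrating against the finite measures $\pia\,\pib$ then yields case (i).

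The hard part will be cases (ii)--(iv), where $\alpha<-1/2$ and/or $\beta<-1/2$ and the densities defining $\Pi_\alpha,\Pi_\beta$ fail to be integrable at the endpoint $u=1$ (resp. $v=1$), so the naive integral of the scalar bound diverges. The remedy, following \cite{parameters}, is to integrate by parts in the $u$-variable, and independently in $v$. Since $\partial_u\q=-\st\,\svp$, each integration by parts transfers a derivative onto the profile---lowering a power of $(t^2+\q)$ and producing the factor $\st\,\svp$, which is then dominated by $(\st+\svp)^{k}$---at the cost of replacing $\pia$ by the regularized measure $\piK$ and spawning a boundary term at $u=\pm1$; the parallel step in $v$ introduces $(\ct+\cvp)^{r}$ and $\piR$. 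Carrying out $K\in\{0,1\}$ and $R\in\{0,1\}$ integrations by parts and expanding the resulting profile derivatives by Leibniz is exactly the origin of the inner sums over $k,r$ and of the exponent shift recorded by $\Ptildej$, and it reproduces the weighted sums in (ii)--(iv); the fully integrated boundary contributions are uniformly bounded and account for the additive constant $1$. The bulk of the work is therefore bookkeeping: tracking how many derivatives reach $\q$ versus the prefactors $\st,\svp,\ct,\cvp$, and verifying that at each stage the gained half powers together with the regularized measures keep the total exponent equal to $\alpha+\beta+3/2+(L+N+M)/2$.
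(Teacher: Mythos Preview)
The paper does not prove this lemma at all: it is quoted verbatim as \cite[Corollary 3.5]{parameters} and used as a black box in the proof of Lemma~\ref{stand}. So there is no ``paper's own proof'' to compare against; the author simply imports the result.

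Your sketch is a reasonable outline of how the argument in \cite{parameters} actually goes: start from the integral representation of $\mathcal{H}_t^{\ab}(\theta,\varphi)$ (\cite[Proposition 2.3]{parameters}), differentiate under the double integral, convert spatial derivatives into $\q$-derivatives via the chain rule, and exploit the crucial half-power gain $|\partial_\theta \q|+|\partial_\varphi \q|\lesssim \sqrt{\q}$ together with the scalar profile bounds to get the exponent $(L+N+M)/2$. For parameters below $-1/2$ the integration-by-parts mechanism in $u$ and/or $v$ that you describe---trading the non-integrable $\Pi_\alpha$, $\Pi_\beta$ for the regularized $\Pi_{\alpha,K}$, $\Pi_{\beta,R}$ at the price of extra $\q$-derivatives and the trigonometric prefactors---is exactly what \cite{parameters} does, and the boundary terms are indeed the source of the additive constant $1$. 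What remains genuinely laborious, and what your proposal leaves implicit, is the careful bookkeeping when the spatial derivatives also hit the integration-by-parts prefactors $\st\svp$ (resp.\ $\ct\cvp$) rather than only the $\q$-slot of the profile; in \cite{parameters} this is handled by a systematic Leibniz expansion and is where the full range $k,r\in\{0,1,2\}$ comes from. Your sketch is on the right track but would need that expansion written out to be a proof rather than a plan.
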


\begin{lem}[{\cite[Lemma 3.8]{parameters}}]\label{3.8}
Assume that $M,N \in \N$ and $L \in \{0,1\}$ are fixed. Given $\ab > -1$, there exists an
$\epsilon = \epsilon(\ab)>0$ such that
\begin{align*}
& \big| \partial_{\vp}^{L} \partial_{\t}^N \partial_t^M \mathcal{H}_{t}^{\ab}(\t,\vp)\big| \\ & \quad \lesssim
e^{- t \left( \left| \frac{\a + \b + 1}{2}  \right| + \epsilon \right)} 
+ \chi_{\{N=L=0, \, \a+\b+1 \ne 0\}} e^{- t  \left| \frac{\a + \b + 1}{2} \right|}
+ \chi_{\{M=N=L=0, \, \a+\b+1=0\}},
\end{align*}
uniformly in $t \ge 1$ and $\t,\vp \in (0,\pi)$. 
\end{lem}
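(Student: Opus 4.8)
The plan is to split the Jacobi--Poisson series \eqref{JPpoly} into the bottom mode $n=0$ and the tail $n\ge1$, handling the two pieces by entirely different mechanisms: the $n=0$ term will be responsible for the two indicator terms on the right-hand side, while the tail will produce the strictly decaying term $e^{-t(|A_{\ab}|+\epsilon)}$. Throughout I use that $\sqrt{\lambda_n^{\ab}}=|n+A_{\ab}|$ with $A_{\ab}=(\alpha+\beta+1)/2>-1/2$, so that $\sqrt{\lambda_n^{\ab}}=n+A_{\ab}$ for every $n\ge1$ (since then $n+A_{\ab}>1/2$), whereas $\sqrt{\lambda_0^{\ab}}=|A_{\ab}|$. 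I also use that $\P_0^{\ab}$ is a nonzero constant, hence all its spatial derivatives vanish. Term-by-term differentiation of \eqref{JPpoly} in $t,\theta,\varphi$ is legitimate because the differentiated series converges absolutely and uniformly for $t\ge1$, $\theta,\varphi\in(0,\pi)$, by the polynomial bounds recorded below.

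For the bottom mode, the contribution of $n=0$ to $\partial_\varphi^L\partial_\theta^N\partial_t^M\mathcal{H}_t^{\ab}(\theta,\varphi)$ vanishes unless $N=L=0$, because $\partial_\theta^N\P_0^{\ab}=0$ for $N\ge1$ and $\partial_\varphi^L\P_0^{\ab}=0$ for $L\ge1$. When $N=L=0$ it equals
$$
\big(\P_0^{\ab}\big)^2\,\big(-\sqrt{\lambda_0^{\ab}}\big)^M\exp\!\big(-t\sqrt{\lambda_0^{\ab}}\big).
$$
If $\alpha+\beta+1\ne0$ then $\sqrt{\lambda_0^{\ab}}=|A_{\ab}|>0$ and this is bounded by a constant times $e^{-t|A_{\ab}|}$, which is exactly the middle term. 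If $\alpha+\beta+1=0$ then $\sqrt{\lambda_0^{\ab}}=0$, so the factor $(-\sqrt{\lambda_0^{\ab}})^M$ vanishes for $M\ge1$ and equals $1$ for $M=0$; hence the $n=0$ contribution is a bounded constant precisely when $M=N=L=0$, matching the last term. This accounts completely for the role of the bottom mode.

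For the tail I would bound, using the triangle inequality and $(\sqrt{\lambda_n^{\ab}})^M=(n+A_{\ab})^M\lesssim(n+1)^M$,
$$
\sum_{n=1}^{\infty}\big(\sqrt{\lambda_n^{\ab}}\big)^M e^{-t(n+A_{\ab})}\,\big|\partial_\theta^N\P_n^{\ab}(\theta)\big|\,\big|\partial_\varphi^L\P_n^{\ab}(\varphi)\big|
\lesssim e^{-tA_{\ab}}\sum_{n=1}^{\infty}(n+1)^{D}e^{-tn},
$$
where $D=D(M,N,L,\alpha,\beta)$ comes from the uniform polynomial-in-$n$ bounds for spatial derivatives discussed below. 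For $t\ge1$ one has $\sum_{n\ge1}(n+1)^De^{-tn}=e^{-t}\sum_{m\ge0}(m+2)^De^{-tm}\le C_D\,e^{-t}$, so the tail is $\lesssim e^{-t(A_{\ab}+1)}$. Finally $A_{\ab}+1-|A_{\ab}|$ equals $1$ if $A_{\ab}\ge0$ and $2A_{\ab}+1$ if $A_{\ab}<0$, and is positive in both cases since $A_{\ab}>-1/2$; choosing $\epsilon:=\min\{1,2A_{\ab}+1\}>0$ yields $e^{-t(A_{\ab}+1)}\le e^{-t(|A_{\ab}|+\epsilon)}$, the first term. This $\epsilon$ is exactly the spectral gap $\sqrt{\lambda_1^{\ab}}-\sqrt{\lambda_0^{\ab}}$.

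The main, and essentially the only nontrivial, obstacle is to supply the uniform-in-$\theta$ polynomial bounds $|\partial_\theta^N\P_n^{\ab}(\theta)|\lesssim(n+1)^{D}$. These I would obtain by iterating the classical differentiation formula $\frac{d}{dx}P_n^{\ab}(x)=\frac{n+\alpha+\beta+1}{2}P_{n-1}^{\alpha+1,\beta+1}(x)$: each application of $\partial_\theta$ to $\P_n^{\ab}(\theta)=c_n^{\ab}P_n^{\ab}(\cos\theta)$ produces a bounded factor $-\sin\theta$, a factor $\tfrac{n+\alpha+\beta+1}{2}\lesssim n$, a shift of parameters to $(\alpha+1,\beta+1)$, and a ratio of normalizing constants $c_n^{\ab}/c_{n-1}^{\alpha+1,\beta+1}$ of polynomial order in $n$. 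After $N$ such iterations one invokes the global bound \eqref{estjac} with parameters $(\alpha+N,\beta+N)$, which is uniform in $\theta$, and likewise in the $\varphi$ variable; this produces the required exponent $D$ depending only on $M,N,L,\alpha,\beta$ and, together with the bottom-mode analysis above, completes the proof.
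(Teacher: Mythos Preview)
The paper does not give a proof of this lemma; it is simply quoted from \cite[Lemma 3.8]{parameters}, so there is nothing in the present paper to compare your argument against. Your approach---splitting the series \eqref{JPpoly} into the bottom mode $n=0$ and the tail $n\ge1$, handling $n=0$ by the observation that $\P_0^{\ab}$ is a nonzero constant, and controlling the tail via uniform polynomial bounds on $|\partial_\theta^N\P_n^{\ab}|$ combined with the spectral gap $\sqrt{\lambda_1^{\ab}}-\sqrt{\lambda_0^{\ab}}>0$---is correct and is the natural elementary route to the estimate.

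One minor imprecision in your last paragraph: you describe the iteration of $\partial_\theta$ as if each step cleanly replaces $\P_n^{\ab}$ by a single $\P_{n-1}^{\alpha+1,\beta+1}$ multiplied by a bounded factor $-\sin\theta$. In fact, after the first step the accumulated $\sin\theta$ factor must also be differentiated, so by the product rule $N$ applications of $\partial_\theta$ yield a finite linear combination of terms involving $\P_{n-j}^{\alpha+j,\beta+j}$ for $0\le j\le N$, each with bounded trigonometric coefficients and polynomial-in-$n$ prefactors. This does not affect your conclusion at all---every such term is still controlled by \eqref{estjac} with the appropriate shifted parameters---but the induction should be stated for the full expression rather than for a single factor.
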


The next lemma gives control of certain expressions in terms of the right-hand sides of the growth and
gradient conditions. Note that the second estimate is an immediate consequence of the first one and the
bound $\q \gtrsim |\theta-\varphi|^2$.
\begin{lem}[{\cite[Lemma 3.1]{parameters}}]\label{3.1}
Let $\ab > -1$. Assume that $\xi_1,\xi_2,\kappa_1,\kappa_2 \ge 0$ are fixed and such that 
$\a+\xi_1+\kappa_1, \, \b+\xi_2+\kappa_2 \ge -1/2$. Then, uniformly in $\t,\vp \in (0,\pi)$, $\t \ne \vp$,  
\begin{align*}
	&\bigg( \st+\svp \bigg)^{2\xi_1} \bigg( \ct + \cvp \bigg)^{2\xi_2} \iint 
\frac{d\Pi_{\a+\xi_1+\kappa_1}(u) \, 
	d\Pi_{\b+\xi_2+\kappa_2}(v) }{q(\t,\vp,u,v)^{\a+\b+\xi_1+\xi_2+3\slash 2}}\\
	& \quad
\lesssim \frac{1}{\mu_{\ab}(B(\t,|\t-\vp|))}, \\
	&\bigg( \st+\svp \bigg)^{2\xi_1} \bigg( \ct + \cvp \bigg)^{2\xi_2} \iint 
\frac{d\Pi_{\a+\xi_1+\kappa_1}(u) \, d\Pi_{\b+\xi_2+\kappa_2}(v) }{q(\t,\vp,u,v)^{\a+\b+\xi_1+\xi_2+2}}\\
	& \quad
\lesssim \frac{1}{ |\t-\vp| \, \mu_{\ab}(B(\t,|\t-\vp|))}.
\end{align*}
\end{lem}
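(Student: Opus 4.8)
The plan is to reduce the whole statement to a single elementary two–dimensional integral and then to match its size against the explicit measure of a ball. Throughout I would write $P=\st\,\svp$, $Q=\ct\,\cvp$, and recall from \cite{parameters} that for $\lambda>-1/2$ the measure $d\Pi_\lambda$ is a constant multiple of $(1-s^2)^{\lambda-1/2}\,ds$ on $(-1,1)$, while $d\Pi_{-1/2}=\tfrac12(\delta_{-1}+\delta_1)$. The first thing to record is the algebraic identity
\begin{equation*}
\q = 2\sin^2\frac{\theta-\varphi}{4} + (1-u)P + (1-v)Q ,
\end{equation*}
which comes from $\st\,\svp+\ct\,\cvp=\cos\frac{\theta-\varphi}{2}$. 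It gives at once $\q\gtrsim|\theta-\varphi|^2$, so that the second estimate follows from the first by writing $\q^{-(\alpha+\beta+\xi_1+\xi_2+2)}\lesssim|\theta-\varphi|^{-1}\,\q^{-(\alpha+\beta+\xi_1+\xi_2+3/2)}$; hence I would prove only the first estimate. The identity also exhibits the only two singular endpoints of the integrand, $u=1$ and $v=1$, and isolates $D:=2\sin^2\frac{\theta-\varphi}{4}\simeq|\theta-\varphi|^2$ as the quantity regularizing $\q$ there.

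The second ingredient is the standard ball–measure estimate
\begin{equation*}
\m_{\ab}\big(B(\theta,|\theta-\varphi|)\big)\simeq |\theta-\varphi|\,\big(\st+\svp\big)^{2\alpha+1}\big(\ct+\cvp\big)^{2\beta+1},
\end{equation*}
which recasts the target as a pure bound on $\iint\q^{-c}\,d\Pi_a(u)\,d\Pi_b(v)$, where $a=\alpha+\xi_1+\kappa_1$, $b=\beta+\xi_2+\kappa_2$ and $c=\alpha+\beta+\xi_1+\xi_2+3/2$. Next I would localise near the singular corner: for $u\le1/2$ one has $\q\gtrsim P$ and for $v\le1/2$ one has $\q\gtrsim Q$, and these contributions are of lower order; near $u=1,v=1$ I replace $(1-u^2)^{a-1/2}\simeq(1-u)^{a-1/2}$ and substitute $w=(1-u)P$, $z=(1-v)Q$, arriving at
\begin{equation*}
\big(\st+\svp\big)^{2\xi_1}\big(\ct+\cvp\big)^{2\xi_2}\iint\frac{d\Pi_a(u)\,d\Pi_b(v)}{\q^{c}}
\lesssim \frac{\big(\st+\svp\big)^{2\xi_1}\big(\ct+\cvp\big)^{2\xi_2}}{P^{a+1/2}\,Q^{b+1/2}}\,K ,
\end{equation*}
with
\begin{equation*}
K=\int_0^{2P}\int_0^{2Q}\frac{w^{a-1/2}\,z^{b-1/2}}{(D+w+z)^{c}}\,dz\,dw .
\end{equation*}

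The evaluation of $K$ is the core of the argument and the step I expect to be hardest. The subtlety is that $c-(a+b+1)=\tfrac12-\kappa_1-\kappa_2$ can have either sign, so the integral over $[0,\infty)^2$ need not converge; the finite cut–offs $2P,2Q$ must therefore be retained, and — crucially — one cannot discard the extra decay carried by $\kappa_1,\kappa_2$ separately from the singularity exponent $c$, since the two compensate each other. I would accordingly split the domain according to the relative sizes of $D\simeq|\theta-\varphi|^2$, $P$, $Q$ and the cut–offs, integrate first in $z$ and then in $w$ by means of the elementary one–variable bounds for $\int_0^{T}s^{\rho-1}(E+s)^{-\sigma}\,ds$, and track the resulting powers of $D,P,Q$ in each regime. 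Using the equivalences $\sqrt P\lesssim\st+\svp\simeq\sqrt P+|\theta-\varphi|$ and $\sqrt Q\lesssim\ct+\cvp\simeq\sqrt Q+|\theta-\varphi|$ to compare the front factors with $P$ and $Q$, all exponents should collapse to $\big[|\theta-\varphi|(\st+\svp)^{2\alpha+1}(\ct+\cvp)^{2\beta+1}\big]^{-1}=\m_{\ab}(B(\theta,|\theta-\varphi|))^{-1}$.

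Finally, the atomic cases would be folded into the same scheme: when $a=-1/2$ (resp.\ $b=-1/2$), the atom of $d\Pi_{-1/2}$ at the near endpoint $u=1$ simply suppresses the $(1-u)$–term, i.e.\ sets $w=0$ (resp.\ $z=0$) in $K$, while the atom at the far endpoint $u=-1$ forces $\q\gtrsim P+D$ and produces only a harmless lower–order term. The one genuinely delicate point throughout is the exponent bookkeeping across the regimes of $K$; everything else is routine.
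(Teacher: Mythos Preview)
The paper does not prove this lemma at all: it is quoted verbatim from \cite[Lemma~3.1]{parameters} and used as a black box. The only remark the paper makes is the one-line observation preceding the statement, namely that the second estimate follows from the first together with $\q\gtrsim|\theta-\varphi|^2$ --- precisely the reduction you carry out in your first paragraph. So there is no ``paper's own proof'' to compare against.

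That said, your sketch is headed in the right direction and is essentially the strategy one finds in \cite{parameters} (and, for $\ab\ge-1/2$, already in \cite[Lemma~4.3]{NS1}): exploit the decomposition $\q=2\sin^2\frac{\theta-\varphi}{4}+(1-u)P+(1-v)Q$, localise near the only singular corner $u=v=1$, change variables to $w=(1-u)P$, $z=(1-v)Q$, and compare the outcome with the explicit ball-measure formula. The algebraic identity, the ball-measure asymptotic, and the reduction of the second inequality to the first are all correct as stated.

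Where your write-up remains a proposal rather than a proof is the evaluation of $K$. You correctly flag that $c-(a+b+1)=\tfrac12-\kappa_1-\kappa_2$ can have either sign, so the cut-offs $2P,2Q$ are essential; but ``split the domain according to the relative sizes of $D,P,Q$\ldots\ all exponents should collapse'' is exactly the place where the work lives, and you have not done it. In particular, the intermediate regimes (e.g.\ $D\lesssim P$ but $D\gtrsim Q$) require separate bookkeeping, and the passage from $P^{-(a+1/2)}Q^{-(b+1/2)}$ times the resulting powers of $D,P,Q$ to the single expression $|\theta-\varphi|^{-1}(\st+\svp)^{-2\alpha-1}(\ct+\cvp)^{-2\beta-1}$ uses the equivalences $\st+\svp\simeq\sqrt P+|\theta-\varphi|$ in a way that must be made explicit case by case. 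Your treatment of the ``far'' regions $u\le 1/2$ or $v\le 1/2$ is also thinner than it should be: the bound $\q\gtrsim P$ is correct (since $1-uP-vQ\ge(1-P-Q)+P/2\ge P/2$), but ``lower order'' still has to be turned into the actual target estimate, which again needs the ball-measure comparison. None of this is deep, but as written the argument is an outline, not a proof.
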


Finally, we will also need an estimate stated in the next lemma, which does not seem to appear elsewhere.
\begin{lem}\label{3.6}
Let $\eta\in\mathbb{R}$, $\xi > -1$ and $\gamma>0$. Then 
\begin{equation*}
\int_0^1\bigg(\int_0^1\frac{t^{2\gamma-1}\,dt}{\big((t+s)^2+\q\big)^{\eta}}\bigg)^{1/2}
	s^{\xi}\,ds\lesssim
	\begin{cases} 
		\q^{-(\eta-\xi-\gamma-1)/2}, & \eta-\xi-\gamma > 1, \\
		\log(4/\q), & \eta-\xi-\gamma \leq 1,
	\end{cases}
\end{equation*}
uniformly in $\q$.
\end{lem}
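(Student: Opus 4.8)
The plan is to estimate the inner integral in $t$ first, treating $s$ and $\q$ as parameters, and then integrate the resulting bound against $s^{\xi}\,ds$ on $(0,1)$. For the inner integral, I would split the range of $t$ at the natural scale $t \simeq \sqrt{\q}+s$. When $t \lesssim \sqrt{\q}+s$ the denominator $\big((t+s)^2+\q\big)^{\eta}$ is comparable to $(\q + s^2)^{\eta}$ (up to a harmless constant, after further noting $(t+s)^2+\q \gtrsim \q$ always, so the $\eta<0$ and $\eta \ge 0$ cases are both controlled), and $\int_0^{\sqrt{\q}+s} t^{2\gamma-1}\,dt \simeq (\sqrt{\q}+s)^{2\gamma}$; when $t \gtrsim \sqrt{\q}+s$ one has $(t+s)^2+\q \simeq t^2$, so the integrand is $\simeq t^{2\gamma-1-2\eta}$, and $\int_{\sqrt{\q}+s}^{1} t^{2\gamma-1-2\eta}\,dt$ is dominated by $(\sqrt\q+s)^{2\gamma-2\eta}$ when $\gamma-\eta<0$ and by a constant (or $1$) when $\gamma-\eta \ge 0$; in the borderline case $\gamma=\eta$ one picks up a factor $\log\big(2/(\sqrt\q+s)\big)$. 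In all cases the inner integral is bounded by a constant times $(\q+s^2)^{\gamma-\eta}$ plus, when relevant, the constant coming from the far-field piece; taking the square root gives a bound of the form $(\q+s^2)^{(\gamma-\eta)/2}$ up to logarithmic corrections in the critical range.

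Next I would insert this bound into $\int_0^1 (\,\cdot\,)\, s^{\xi}\,ds$. Using $\q + s^2 \simeq \max(\q,s^2)$, the integral $\int_0^1 (\q+s^2)^{(\gamma-\eta)/2} s^{\xi}\,ds$ splits at $s \simeq \sqrt{\q}$: on $(0,\sqrt\q)$ the factor is $\simeq \q^{(\gamma-\eta)/2}$ and integrating $s^{\xi}$ (legitimate since $\xi > -1$) yields $\q^{(\gamma-\eta)/2}\cdot \q^{(\xi+1)/2} = \q^{(\gamma-\eta+\xi+1)/2}$; on $(\sqrt\q,1)$ the factor is $\simeq s^{\gamma-\eta}$ and one integrates $s^{\gamma-\eta+\xi}$, which produces $\q^{(\gamma-\eta+\xi+1)/2}$ when $\gamma-\eta+\xi+1<0$, i.e.\ $\eta-\xi-\gamma>1$, a constant when $\eta-\xi-\gamma<1$, and a logarithm $\log(4/\q)$ at the endpoint $\eta-\xi-\gamma=1$. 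Rewriting $(\gamma-\eta+\xi+1)/2 = -(\eta-\xi-\gamma-1)/2$ gives exactly the two cases in the statement; the logarithmic corrections from the inner integral only occur when $\gamma \simeq \eta$, which forces $\eta-\xi-\gamma \le 1$ (since $\xi > -1$), so they are absorbed into the $\log(4/\q)$ branch and cause no harm.

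The only mildly delicate point is bookkeeping: one must keep the three possible sources of logarithms (inner-integral borderline $\gamma=\eta$, outer-integral borderline $\eta-\xi-\gamma=1$) from compounding into a power of a logarithm, and check that $\q\lesssim 1$ (which holds since $\q \le 2$) so that $\log(4/\q)\gtrsim 1$ and the estimate is genuinely uniform in $\q$. I would organize the write-up as two lemmas-within-the-proof: first the pointwise-in-$s$ inner estimate $\big(\int_0^1 t^{2\gamma-1}\big((t+s)^2+\q\big)^{-\eta}\,dt\big)^{1/2} \lesssim (\q+s^2)^{(\gamma-\eta)/2}\log^{1/2}\!\big(4/(\q+s^2)\big)$, and then the elementary integration in $s$, which is a standard power-counting computation.

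I do not expect any serious obstacle here; the estimate is a routine, if somewhat tedious, two-step splitting argument, and the main care needed is in tracking the borderline logarithmic cases so that the final bound is exactly as stated rather than off by a logarithmic factor.
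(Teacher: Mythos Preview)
Your approach is correct and is organized somewhat differently from the paper's. The paper first splits the double integral into $\{t<s\}$ and $\{t>s\}$; on the first piece it uses $t+s\simeq s$ to collapse to a single integral $\int_0^1 s^{\xi+\gamma}(s^2+\q)^{-\eta/2}\,ds$ (handled by the change of variable $s=\sqrt{\q}\,v$), and on the second piece it further subdivides both variables at $\sqrt{\q}$, producing three elementary regions $J_1,J_2,J_3$. You instead resolve the inner $t$-integral completely first, splitting at $t\simeq\sqrt{\q}+s$, and then integrate the resulting one-variable bound in $s$ by splitting at $s\simeq\sqrt{\q}$. Both decompositions lead to the same power counting; yours is more modular (inner then outer), while the paper's intertwines the two variables from the start.

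One small correction to your write-up plan: the proposed inner estimate
\[
\bigg(\int_0^1 \frac{t^{2\gamma-1}\,dt}{((t+s)^2+\q)^{\eta}}\bigg)^{1/2}
\lesssim (\q+s^2)^{(\gamma-\eta)/2}\,\log^{1/2}\!\Big(\frac{4}{\q+s^2}\Big)
\]
is only valid for $\eta\ge\gamma$. When $\eta<\gamma$ the far-field piece $\int_{\sqrt{\q}+s}^1 t^{2\gamma-1-2\eta}\,dt$ contributes a constant (as you yourself note earlier), while the displayed right-hand side tends to $0$ as $\q+s^2\to 0$, so it cannot dominate that constant. This does no harm to the final result: $\eta<\gamma$ together with $\xi>-1$ forces $\eta-\xi-\gamma<1$, and the correct inner bound $\lesssim 1$ in that regime immediately gives $\int_0^1 s^{\xi}\,ds\lesssim 1\lesssim\log(4/\q)$. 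Just dispose of the easy range $\eta\le\gamma$ (and $\eta\le 0$) at the outset and state your inner lemma only for $\eta>\gamma$.
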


\begin{proof}
Denote by $I$ the expression we need to estimate.
Splitting the inner integral and using the elementary relation $\sqrt{A+B}\simeq\sqrt{A}+\sqrt{B}$,
$A,B\ge 0$, we get
\begin{align*}
I & \simeq
\int_0^1\bigg(\int_0^s\frac{t^{2\gamma-1}\,dt}{\big((t+s)^2+\q\big)^{\eta}}\bigg)^{1/2}s^{\xi}\,ds
+ \int_0^1\bigg(\int_s^1\frac{t^{2\gamma-1}\,dt}{\big((t+s)^2+\q\big)^{\eta}}\bigg)^{1/2}s^{\xi}\,ds\\
&\equiv I_1+I_2.
\end{align*}
We will treat $I_1$ and $I_2$ separately.

Observe that in the region of integration in $I_1$ we have
$t+s\simeq s$, so 
\begin{align*}
I_1&\simeq\int_0^1\bigg(\int_0^s {t^{2\gamma-1}\,dt}\bigg)^{1/2}\frac{s^{\xi}\,ds}{(s^2+\q)^{\eta/2}}
\simeq \int_0^1\frac{s^{\xi+\gamma}\,ds}{(s^2+\q)^{\eta/2}}=
\q^{-(\eta-\xi-\gamma-1)/2}\int_0^{1/\sqrt{\q}}\frac{v^{\xi+\gamma}\,dv}{(1+v^2)^{\eta/2}},
\end{align*}
where the last equality is obtained by the change of variable $s=\sqrt{\q}v$. Since
\begin{align*}
\int_0^{1/\sqrt{q}}\frac{v^{\xi+\gamma}\,dv}{(1+v^2)^{\eta/2}}\lesssim
\begin{cases} 
1, & \eta-\xi-\gamma > 1, \\
\log(4/\q), & \eta-\xi-\gamma = 1,\\
\q^{(\eta-\xi-\gamma-1)/2}, & \eta-\xi-\gamma < 1,
\end{cases}
\end{align*}
and clearly $1\lesssim\log(4/\q)$, the desired bound for $I_1$ follows.

To deal with $I_2$ we consider two main cases. If $\q\ge 1$, then $(t+s)^2+\q\simeq 1$ and it is easy to
see that $I_2 \lesssim 1$. This is even stronger estimate than needed.
When $\q<1$, we split the integral in a similar manner as in case of $I_1$ and get
\begin{align*}
I_2&\simeq\int_0^{\sqrt{\q}}\bigg(\int_s^{\sqrt{\q}}\frac{t^{2\gamma-1}\,dt}
	{\big((t+s)^2+\q\big)^{\eta}}\bigg)^{1/2}\,s^{\xi}\,ds+
\int_0^{\sqrt{\q}}\bigg(\int_{\sqrt{\q}}^1\frac{t^{2\gamma-1}\,dt}
	{\big((t+s)^2+\q\big)^{\eta}}\bigg)^{1/2}\,s^{\xi}\,ds\\
&\quad+\int_{\sqrt{\q}}^1\bigg(\int_s^1\frac{t^{2\gamma-1}\,dt}
	{\big((t+s)^2+\q\big)^{\eta}}\bigg)^{1/2}\,s^{\xi}\,ds \equiv J_1+J_2+J_3.
\end{align*}
Notice that $s<t<\sqrt{\q}$ in $J_1$, $s<\sqrt{\q}<t$ in $J_2$ and $\sqrt{\q}<s<t$ in $J_3$. 
Consequently, we have
\begin{align*}
J_1 \simeq
\q^{-\eta/2}\int_0^{\sqrt{\q}}
\bigg(\int_s^{\sqrt{\q}}t^{2\gamma-1}\,dt\bigg)^{1/2} s^{\xi}\,ds
\lesssim\q^{-(\eta-\xi-\gamma-1)/2}.
\end{align*}
In case of $J_2$ we can write
\begin{align*}
J_2\simeq\int_0^{\sqrt{\q}}\bigg(\int_{\sqrt{\q}}^1 t^{-2\eta+2\gamma-1}\,dt\bigg)^{1/2}\,s^{\xi}\,ds.
\end{align*}
Then, assuming that $\eta\neq\gamma$, we get 
$$
J_2\lesssim |1-\q^{-\eta+\gamma}|^{1/2}\q^{(\xi+1)/2}
\le \q^{(\xi+1)/2}+\q^{-(\eta-\xi-\gamma-1)/2}\lesssim1+\q^{-(\eta-\xi-\gamma-1)/2},
$$
while for $\eta=\gamma$ we obtain
$$
J_2\lesssim  (-\log\q)^{1/2}\q^{(\xi+1)/2}\lesssim1.
$$
Finally, considering $J_3$, we have
$$
J_3\simeq\int_{\sqrt{\q}}^1\bigg(\int_s^1 t^{-2\eta+2\gamma-1}\,dt\bigg)^{1/2}\,s^{\xi}\,ds.
$$
Assuming first that $\eta\neq\gamma$, we see that
$$
J_3\lesssim 1+\q^{(\xi+1)/2}+\int_{\sqrt{\q}}^1 s^{-\eta+\xi+\gamma}\,ds
	\lesssim 1+\int_{\sqrt{\q}}^1 s^{-\eta+\xi+\gamma}\,ds,
$$
which easily leads to the bound
$$
J_3 \lesssim \q^{-(\eta-\xi-\gamma-1)/2} + \log(4/\q).
$$
In the remaining case $\eta=\gamma$ we have
$$
J_3\simeq (-\log\q)^{1/2} \int_{\sqrt{\q}}^1 s^{\xi}\,ds
\lesssim \log(4/\q).
$$
Combining the above estimates of $J_1, J_2$ and $J_3$ we get
$$
I_2 \lesssim \q^{-(\eta-\xi-\gamma-1)/2} + \log(4/\q).
$$
Since $\log(4/\q)\lesssim\q^{-(\eta-\xi-\gamma-1)/2}$ if $\eta-\xi-\gamma>1$ and
$\q^{-(\eta-\xi-\gamma-1)/2}<\log(4/\q)$ if $\eta-\xi-\gamma\leq1$,
the necessary bound for $I_2$ follows.
\end{proof}

Now we are in a position to prove Lemma \ref{stand}.
\begin{proof}[{Proof of Lemma \ref{stand}}]
Let $m=\lfloor\gamma\rfloor+1$. In view of the estimates from Lemma \ref{3.5}, 
it is natural and convenient to consider separately the four cases: $\ab\ge-1/2$, $-1<\alpha<-1/2\leq\beta$,
$-1<\beta<-1/2\leq\alpha$ and $-1<\ab<-1/2$. 
The treatment of each of them relies on similar arguments, thus we shall present the details only for the
most involved case $-1 < \ab < -1/2$. Analysis in the other cases is left to the reader.

To show the growth condition \eqref{gr} we split the kernel
\begin{align*}
\partial_t^{\gamma}\mathcal{H}_t^{\ab}(\theta, \varphi)&=\frac{1}{\Gamma(m-\gamma)}
	\int_0^{\infty}\chi_{\{t+s < 1\}}
	\frac{\partial^m}{\partial t^m}\mathcal{H}_{t+s}^{\ab}(\theta, \varphi)\,s^{m-\gamma-1}\,ds\\
&\quad +\frac{1}{\Gamma(m-\gamma)}\int_0^{\infty}\chi_{\{t+s \ge 1\}}
	\frac{\partial^m}{\partial t^m}\mathcal{H}_{t+s}^{\ab}(\theta, \varphi)\,s^{m-\gamma-1}\,ds\equiv A_1+A_2.
\end{align*}
We will estimate $A_1$ and $A_2$ separately.

Using Minkowski's integral inequality and then Lemma \ref{3.5} and Lemma \ref{3.6} 
(the latter applied with $\eta=2\alpha+2\beta+3+m+Kk+Rr$ and $\xi=m-\gamma-1$) we get
\begin{align*}
&\|A_1\|_{L^2(t^{2\gamma-1}\,dt)}\\ 
&\lesssim 1 + 
\sum_{K,R=0,1}
\sum_{k,r=0,1,2} 
\bigg( \st+\svp \bigg)^{Kk} \bigg( \ct + \cvp \bigg)^{Rr}\\
&\quad\times\iint\bigg(\int_0^1\bigg(\int_0^{1}
\frac{t^{2\gamma-1}\,dt}{\big((t+s)^2+\q\big)^{2\alpha+2\beta+3+m+Kk+Rr}}\bigg)^{1/2}
	\,s^{m-\gamma-1}\,ds\bigg)\,\piK \, \piR \\
&\lesssim 1 +
\sum_{K,R=0,1}
\sum_{k,r=0,1,2} 
\bigg( \st+\svp \bigg)^{Kk} \bigg( \ct + \cvp \bigg)^{Rr}\\
&\quad\times\iint\bigg[\Big(\frac{1}{\q}\Big)^{\alpha+\beta+3/2+Kk/2+Rr/2}+\log\frac{4}{\q}
\,\bigg]\,{d\Pi_{\alpha, K}(u)\,d\Pi_{\beta, R}(v)}.
\end{align*}
The term $1$ above satisfies the growth bound, because $\mu_{\ab}((0,\pi)) < \infty$.
The desired estimate for the expression that emerges from considering the first term in the last 
double integral follows directly by an application of Lemma \ref{3.1} 
(specified to $\xi_1=Kk\slash 2$, $\kappa_1=-\a-1\slash 2$ if $K=0$ and $\kappa_1=1 - k\slash 2$ if $K=1$,
$\xi_2=Rr\slash 2$, $\kappa_2=-\b-1\slash 2$ if $R=0$ and $\kappa_2=1 - r\slash 2$ if $R=1$). 
To bound the remaining expression, first recall that $\q \gtrsim |\theta - \varphi|^2$ and observe that
$\log(4/\q) \lesssim \log(4/|\theta - \varphi|)$. On the other hand, we have (see \cite[Lemma 4.2]{NS1})
$$
\mu_{\ab}\big(B(\theta,|\theta-\varphi|)\big) \simeq |\theta-\varphi| (\theta+\varphi)^{2\alpha+1}
	(\pi-\theta + \pi - \varphi)^{2\beta+1}, \qquad \theta,\varphi \in (0,\pi),
$$
so there exists an $\epsilon= \epsilon(\ab)>0$ such that
$$
\mu_{\ab}\big(B(\theta,|\theta-\varphi|)\big) \lesssim |\theta-\varphi|^{\epsilon}, \qquad
	\theta,\varphi \in (0,\pi).
$$
Thus $\log(4/\q)$ is controlled by the right-hand side in \eqref{gr} and the conclusion follows by
finiteness (cf.\ \cite[Section 2]{parameters}) of the measures appearing in the last double integral.

Considering $A_2$, notice that Lemma \ref{3.8} implies that there is $\delta=\delta(\alpha,\beta)>0$ 
such that
$$
\chi_{\{t+s\ge1\}}\Big|\frac{\partial^m}{\partial t^m}\mathcal{H}_{t+s}^{\ab}(\theta, \varphi)\Big|
	\lesssim e^{-(t+s)\,\delta}, \qquad \theta,\varphi\in (0,\pi).
$$
Then using Minkowski's integral inequality we get
$$
\|A_2\|_{L^2(t^{2\gamma-1}\,dt)}\lesssim\int_0^{\infty}\bigg(\int_0^{\infty}e^{-2(t+s)\delta}
	t^{2\gamma-1}\,dt\bigg)^{1/2}\,s^{m-\gamma-1}\,ds < \infty,
$$
which implies the desired bound for $A_2$.

Now we turn to proving the gradient estimate. For symmetry reasons, it is enough to consider 
the partial derivative with respect to $\theta$. 
Let us first ensure that the weak derivative $\partial_{\theta}$ of
$\mathcal{G}^\gamma_{\ab}(\theta,\varphi)$ exists in the sense of \eqref{pd} and is equal to
\{$\partial_{\theta}\partial_t^{\gamma}\mathcal{H}_t^{\ab}(\theta, \varphi)\}_{t>0}$. 
It suffices to check that, for each $\theta,\varphi \in (0,\pi)$, $\theta\neq\varphi$,
$\partial_{\theta}\partial_t^{\gamma}\mathcal{H}_t^{\ab}(\theta, \varphi)\in L^2(t^{2\gamma-1}\,dt)$
and
\begin{equation}\label{equal}
\int_0^{\infty}h(t)\,\partial_{\theta}\partial_t^{\gamma}
\mathcal{H}_t^{\ab}(\theta,\varphi)\,t^{2\gamma-1}\,dt=\partial_{\theta}
\int_0^{\infty}h(t)\,\partial_t^{\gamma}\mathcal{H}_t^{\ab}(\theta, \varphi)\,t^{2\gamma-1}\,dt,
\qquad h\in L^2(t^{2\gamma-1}\,dt).
\end{equation}
The first of these facts is justified by the bounds on $B_1$ and $B_2$ obtained below.
To verify \eqref{equal} we use
Fubini's theorem (its application is legitimate, in view of Schwarz' inequality and the bound for 
$\{\partial_{\theta}\partial_t^{\gamma}\mathcal{H}_t^{\ab}(\theta,\varphi)\}_{t>0}$ proved in a moment).
Take $\theta_1, \theta_2\in(0,\pi)$ such that $\varphi\notin [\theta_1, \theta_2]$. Then
\begin{align*}
\int_{\theta_1}^{\theta_2}\int_0^{\infty}h(t)\,\partial_{\theta}\partial_t^{\gamma}
\mathcal{H}_t^{\ab}(\theta,\varphi)\,t^{2\gamma-1}\,dt d\theta&=\int_0^{\infty}h(t)
\,\partial_t^{\gamma}\mathcal{H}_t^{\ab}(\theta_2, \varphi)\,t^{2\gamma-1}\,dt\\
&\quad-\int_0^{\infty}h(t)\,\partial_t^{\gamma}\mathcal{H}_t^{\ab}(\theta_1, \varphi)\,t^{2\gamma-1}\,dt.
\end{align*}
Dividing both sides of the above equality by $\theta_2-\theta_1$ and taking the limit as
$\theta_1\rightarrow\theta_2$ we get \eqref{equal}.

It remains to show that 
$\|\partial_{\theta}\partial_t^{\gamma}\mathcal{H}_t^{\ab}(\theta,\varphi)\|_{L^2(t^{2\gamma-1}dt)}$
is controlled by the right-hand side of \eqref{grad}.
To proceed, we decompose the kernel in the same way as we did when dealing with the growth condition,
\begin{align*}
\partial_{\theta}\partial_t^{\gamma}\mathcal{H}_t^{\ab}(\theta, \varphi)
&=\frac{1}{\Gamma(m-\gamma)}\int_0^{\infty}\chi_{\{t+s<1\}}\partial_{\theta}\frac{\partial^m}
{\partial t^m}\mathcal{H}_{t+s}^{\ab}(\theta, \varphi)\,s^{m-\gamma-1}\,ds\\
&\quad+\frac{1}{\Gamma(m-\gamma)}\int_0^{\infty}\chi_{\{t+s\ge1\}}\partial_{\theta}
\frac{\partial^m}{\partial t^m}\mathcal{H}_{t+s}^{\ab}(\theta, \varphi)\,s^{m-\gamma-1}\,ds\equiv B_1+B_2.
\end{align*}

Using Minkowski's integral inequality together with Lemma \ref{3.5}, and then Lemma \ref{3.6} 
(specified to $\eta=2\alpha+2\beta+4+m+Kk+Rr$ and $\xi=m-\gamma-1$) we obtain
\begin{align*}
&\|B_1\|_{L^2(t^{2\gamma-1}\,dt)}\\
&\lesssim 1+
\sum_{K,R=0,1}
\sum_{k,r=0,1,2} 
\bigg( \st+\svp \bigg)^{Kk} \bigg( \ct + \cvp \bigg)^{Rr}\\
&\quad\times\iint\bigg(\int_0^1\bigg(\int_0^{1}\frac{t^{2\gamma-1}\,dt}
{\big((t+s)^2+\q\big)^{2\alpha+2\beta+4+m+Kk+Rr}}\bigg)^{1/2}\,s^{m-\gamma-1}\,ds\bigg)\,\piK \, \piR \\
&\lesssim 1+
\sum_{K,R=0,1}
\sum_{k,r=0,1,2} 
\bigg( \st+\svp \bigg)^{Kk} \bigg( \ct + \cvp \bigg)^{Rr}\\
&\quad\times\iint\bigg[\Big(\frac{1}{\q}\Big)^{\alpha+\beta+2+Kk/2+Rr/2}
+\log\frac{4}{\q}\bigg]\,{d\Pi_{\alpha, K}(u)\,d\Pi_{\beta, R}(v)}.
\end{align*}
Now the same arguments as in the case of $A_1$ give the desired estimate. 

As for $B_2$, just notice that by Lemma \ref{3.8} there exists $\delta=\delta(\ab)>0$ such that
$$
\chi_{\{t+s\ge1\}}\Big|\partial_{\theta}\frac{\partial^m}{\partial t^m}
\mathcal{H}_{t+s}^{\ab}(\theta, \varphi)\Big|\lesssim e^{-(t+s)\,\delta}, 
\qquad \theta,\varphi\in (0,\pi).
$$
From here the required bound for $B_2$ follows as in the case of $A_2$.
This completes the proof of Lemma \ref{stand}.
\end{proof}



\begin{thebibliography}{9}


\bibitem{BU1}
C. Balderrama, W. Urbina,
\emph{Fractional integration and fractional differentiation for Jacobi expansions},
Divulg. Mat. 15 (2007), 93--113.

\bibitem{BU2}
C. Balderrama, W. Urbina,
\emph{Fractional integration and fractional differentiation for $d$-dimensional Jacobi expansions}.
Special functions and orthogonal polynomials, 1--14, Contemp. Math., 471, Amer. Math. Soc.,
Providence, RI, 2008.

\bibitem{betancor}
J{.}J{.} Betancor, J{.}C{.} Fari\~na, L{.} Rodr\'{i}guez-Mesa, R{.} Testoni, J{.}L{.} Torrea,
\emph{A choice of Sobolev spaces associated with ultraspherical expansions},
Publ{.} Math{.} 54 (2010), 221--242.

\bibitem{betsq}
J{.}J{.} Betancor, J{.}C{.} Fari\~na, L{.} Rodr\'{i}guez-Mesa, R{.} Testoni, J{.}L{.} Torrea,
\emph{Fractional square functions and potential spaces}, 
J{.} Math{.} Anal{.} Appl{.} 386 (2012), 487--504.

\bibitem{BR}
B{.} Bongioanni, K{.} Rogers,
\emph{Regularity of the Schr\"odinger equation for the harmonic oscillator},
Ark{.} Mat{.} 49 (2011), 217--238.

\bibitem{BT1}
B{.} Bongioanni, J{.}L{.} Torrea,
\emph{Sobolev spaces associated to the harmonic oscillator},
Proc{.} Indian Acad{.} Sci{.} Math{.} Sci{.} 116 (2006), 337--360.

\bibitem{BT2}
B{.} Bongioanni, J{.}L{.} Torrea,
\emph{What is a Sobolev space for the Laguerre function systems?}, 
Studia Math{.} 192 (2009), 147--172.

\bibitem{CNS}
\'O{.} Ciaurri, A{.} Nowak, K{.} Stempak,
\emph{Jacobi transplantation revisited},
Math{.} Z{.} 257 (2007), 355--380. 

\bibitem{Graczyk}
P{.} Graczyk, J{.}J{.} Loeb, I{.} L\'opez, A. Nowak, W{.} Urbina, 
\emph{Higher order Riesz transforms, fractional derivatives and Sobolev spaces for Laguerre expansions},
J{.} Math{.} Pures Appl{.} 84 (2005), 375--405.

\bibitem{L1}
B. Langowski,
\emph{Harmonic analysis operators related to symmetrized Jacobi expansions},
Acta Math{.} Hungar{.} 140 (2013), 248--292.

\bibitem{L2}
B. Langowski,
\emph{Sobolev spaces associated with Jacobi expansions}, 
J{.} Math{.} Anal{.} Appl{.} 420 (2014), 1533--1551.

\bibitem{Muckenhoupt}
B{.} Muckenhoupt,
\emph{Transplantation theorems and multiplier theorems for Jacobi series},
Mem{.} Amer{.} Math{.} Soc{.} 64 (1986).

\bibitem{Nowak&Roncal}
A{.} Nowak, L{.} Roncal,
\emph{Potential operators associated with Jacobi and Fourier-Bessel expansions}, 
J{.} Math{.} Anal{.} Appl{.} 422 (2015), 148--184.

\bibitem{NS1}
A. Nowak, P. Sj\"ogren,
\emph{Calder\'on-Zygmund operators related to Jacobi expansions},
J. Fourier Anal. Appl. 18 (2012), 717--749.

\bibitem{NS2}
A. Nowak, P. Sj\"ogren,
\emph{Sharp estimates of the Jacobi heat kernel},
Studia Math. 218 (2013), 219--244.

\bibitem{parameters}
A{.} Nowak, P{.} Sj\"ogren, T{.}Z{.} Szarek,
\emph{Analysis related to all admissible type parameters in the Jacobi setting},
Constr. Approx. (to appear).  \texttt{arXiv:1211.3270}

\bibitem{NoSt1}
A. Nowak, K. Stempak,
\emph{Sharp estimates of the potential kernel for the harmonic oscillator with applications},
Nagoya Math. J. 212 (2013), 1--17.

\bibitem{NoSt2}
A.\ Nowak, K.\ Stempak,
\emph{Sharp estimates for potential operators associated with Laguerre and Dunkl-Laguerre expansions},
preprint 2013. \texttt{arXiv:1402.2522}

\bibitem{NoSt3}
A.\ Nowak, K.\ Stempak,
\emph{Potential operators associated with Hankel and Hankel-Dunkl transforms},
J. Anal. Math. (to appear). \texttt{arXiv:1402.3399}

\bibitem{RT}
R{.} Radha, S{.} Thangavelu,
\emph{Multipliers for Hermite and Laguerre Sobolev spaces},
J{.} Anal{.} 12 (2004), 183--191.

\bibitem{Seg&Whe}
C{.} Segovia, R{.} Wheeden, 
\emph{On certain fractional area integrals}, 
J{.} Math{.} Mech{.} 19 (1969), 247--262.

\bibitem{stein}
E{.}M{.} Stein, 
\emph{Singular integrals and differentiability properties of functions}, 
Princeton University Press, Princeton, N.J., 1970.

\bibitem{Stempak}
K{.} Stempak,
\emph{Jacobi conjugate expansions},
Studia Sci{.} Math{.} Hungar{.} 44 (2007), 117--130.

\bibitem{Sz}
G{.} Szeg\"o,
\emph{Orthogonal polynomials},
Fourth Edition, {Amer{.} Math{.} Soc{.} Colloq{.} Publ{.}} {23}, 
Amer{.} Math{.} Soc{.}, Providence, R{.} I{.}, 1975.

\bibitem{Wainger}
S{.} Wainger,
\emph{Special trigonometric series in $k$-dimensions},
Mem{.} Amer{.} Math{.} Soc{.} 59 (1965).

\end{thebibliography}
\end{document}